\theoremstyle{definition}
\newtheorem{thm}{Theorem}[section]
\newtheorem{meta-thm}{Meta-Theorem}[thm]
\newtheorem{lem}[thm]{Lemma}
\newtheorem{cor}[thm]{Corollary}
\newtheorem{prop}[thm]{Proposition}
\newtheorem{rem}[thm]{Remark}
\newtheorem{exm}[thm]{Example}
\newtheorem{definition-proposition}[thm]{Definition-Proposition}
\newtheorem{running notations}[thm]{Running notations}
\newtheorem{notation}[thm]{Notation}
\newtheorem{remark}[thm]{Remark}
\newtheorem{convention}[thm]{Conventions}
\newtheorem{construction}[thm]{Construction}
\newtheorem*{sketch of proof}{Sketch of proof}
\theoremstyle{definition}
\newtheorem{defin}[thm]{Definition}
\newtheorem*{thm*}{Theorem}
\def\co{\colon\thinspace}
\DeclareMathOperator{\bfk}{\bf k}
\DeclareMathOperator{\bL}{\mathbb{L}}
\DeclareMathOperator{\bZ}{\mathbb{Z}}
\DeclareMathOperator{\bT}{\mathbb{T}}
\DeclareMathOperator{\cA}{\mathcal{A}}
\DeclareMathOperator{\cF}{\mathcal{F}}
\DeclareMathOperator{\cI}{\mathcal{I}}
\DeclareMathOperator{\cR}{\mathcal{R}}
\DeclareMathOperator{\cU}{\mathcal{U}}
\DeclareMathOperator{\cV}{\mathcal{V}}
\DeclareMathOperator{\cW}{\mathcal{W}}
\newcommand{\m}[1]{\underline{#1}}
\newcommand{\mM}{\m{M}}
\newcommand{\mZ}{\m{\bZ}}
\newcommand{\mA}{\m{A}}
\newcommand{\mW}{\m{\mathbb{W}}}
\newcommand{\mN}{\m{N}}
\newcommand{\upi}{\m{\pi}}
\DeclareMathOperator{\HH}{HH}
\DeclareMathOperator{\W}{\operatorname{W}}
\DeclareMathOperator{\coker}{coker}
\DeclareMathOperator{\id}{\operatorname{id}}
\DeclareMathOperator{\THH}{\operatorname{THH}}
\DeclareMathOperator{\TC}{\operatorname{TC}}
\DeclareMathOperator{\HR}{\underline{\operatorname{HR}}}
\DeclareMathOperator{\THR}{\operatorname{THR}}
\DeclareMathOperator{\TRR}{\operatorname{TRR}}
\DeclareMathOperator{\TCR}{\operatorname{TCR}}
\DeclareMathOperator{\K}{\operatorname{K}}
\DeclareMathOperator{\KR}{\operatorname{KR}}
\DeclareMathOperator{\TR}{\operatorname{TR}}
\DeclareMathOperator{\Comm}{\operatorname{Comm}}
\DeclareMathOperator{\All}{\mathcal{A}\ell\ell}
\DeclareMathOperator{\tr}{\operatorname{tr}}
\DeclareMathOperator{\di}{\textup{di}}
\DeclareMathOperator{\op}{\textup{op}}
\DeclareMathOperator{\res}{\textup{res}}
\DeclareMathOperator{\cy}{\textup{cy}}
\DeclareMathOperator{\Map}{\operatorname{Map}}
\DeclareMathOperator{\Stab}{\operatorname{Stab}}
\DeclareMathOperator{\holim}{\operatorname{holim}\thinspace}
\DeclareMathOperator{\Tor}{\operatorname{Tor}}
\DeclareMathOperator{\sq}{\operatorname{sq}}
\newcommand{\msf}[1]{\mathsf{#1}}
\DeclareMathOperator{\Mak}{\textup{Mack}}
\DeclareMathOperator{\Tamb}{\msf{Tamb}}
\DeclareMathOperator{\Assoc}{\msf{Assoc}}
\DeclareMathOperator{\Alg}{\msf{Alg}}
\DeclareMathOperator{\Sp}{\msf{Sp}}
\DeclareMathOperator{\Set}{\msf{Set}}
\DeclareMathOperator{\Top}{\msf{Top}}
\DeclareMathOperator{\ho}{\msf{Ho}}
\DeclareMathOperator{\Ind}{Ind}
\DeclareRobustCommand\bigop[1]{%
  \mathop{\vphantom{\sum}\mathpalette\bigop@{#1}}\slimits@
}
\newcommand{\bigop@}[2]{%
  \vcenter{%
    \sbox\z@{$#1\sum$}%
    \hbox{\resizebox{\ifx#1\displaystyle1.1\fi\dimexpr\ht\z@+\dp\z@}{!}{$\m@th#2$}}%
  }%
}
\newcommand{\bigbox}{\DOTSB\bigop{\square}}
\newcommand{\timesover}[1]{\underset{#1}{\times}}
\title[THR via the norm and Real Witt vectors]{Real topological Hochschild homology via the norm and Real Witt vectors} 
\author{Gabriel Angelini-Knoll}
\address{Department of Mathematics, Applied Mathematics and Statistics, Case Western Reserve University, Cleveland, OH, USA}
\email{gja39@case.edu}
\author{Teena Gerhardt}\address{Department of Mathematics, Michigan State University, East Lansing, MI 48824, USA}\email{teena@math.msu.edu}
\author{Michael A. Hill}\address{School of Mathematics, University of Minnesota, Minneapolis, Minnesota 55455}\email{mahill@umn.edu}
\date{}							
\begin{document} 
\begin{abstract}
We prove that Real topological Hochschild homology can be characterized as the norm from the cyclic group of order $2$ to the orthogonal group $O(2)$. From this perspective, we then prove a multiplicative double coset formula for the restriction of this norm to dihedral groups of order $2m$. This informs our new definition of Real Hochschild homology of rings with anti-involution, which we show is the algebraic analogue of Real topological Hochschild homology. Using extra structure on Real Hochschild homology, we define a new theory of $p$-typical Witt vectors of rings with anti-involution. We end with an explicit computation of the degree zero $D_{2m}$-Mackey functor homotopy groups of $\mathrm{THR}(\underline{\mathbb{Z}})$ for $m$ odd. This uses a Tambara reciprocity formula for sums for general finite groups, which may be of independent interest.
\end{abstract}

\maketitle

\section{Introduction}

Real algebraic $\K$-theory, $\KR$, is an invariant of a ring (spectrum)  
with anti-involution~\cite{HM15}. 
It is a generalization of Karoubi's Hermitian $\K$-theory~\cite{Karoubi}, and an analogue of Atiyah's  topological $\K$-theory with reality~\cite{Atiyah}. Real topological Hochschild homology, $\THR(A)$, is an $O(2)$-equivariant spectrum that receives a trace map from Real algebraic $\K$-theory~\cite{HM15, Dotto12, Hog16}. 
Just as topological Hochschild homology is essential to the trace method approach to algebraic $\K$-theory, $\THR$ is useful for computing Real algebraic $\K$-theory. 

In Hill, Hopkins, and Ravenel's solution to the Kervaire invariant one problem~\cite{HHR}, they developed the theory of a multiplicative norm functor $N_H^G$ from $H$-spectra to $G$-spectra, for $H \le G$ finite groups. 
In the case of non-finite compact Lie groups, however, norms are currently only accessible in a few specific cases. 
In~\cite{ABGHLM18}, the authors extend the norm construction to the circle group $\bT$, defining the equivariant norm $N_e^{\bT}(R)$ for an associative ring spectrum $R$. They further show that this equivariant norm is a model for topological Hochschild homology. In the present paper, we extend the norm construction to an equivariant norm from $D_2$ to $O(2)$, using the dihedral bar construction, demonstrating that this norm is a model for Real topological Hochschild homology. 

Before defining the norm, we introduce some notation. We write $B^{\di}_{\bullet}(A)$ for the dihedral bar construction on $A$ (cf. Section~\ref{sec: norm}). 
Let $\cU$ denote a complete $O(2)$-universe, and let
$\cV$ be the complete $D_{2}$-universe constructed by restricting $\cU$ to $D_{2}$. We write $\widetilde{\cV}$ for the $O(2)$-universe associated to $\mathcal{V}$ by inflation along the determinant homomorphism. The input for Real topological Hochschild homology is a ring spectrum with anti-involution, $(A,\omega)$. Ring spectra with anti-involution can be alternatively described as algebras in $D_2$-spectra over an $E_{\sigma}$-operad, where $\sigma$ is the sign representation of $D_2$, the cyclic group of order 2.  We refer the reader to~\cite{Hil17} for more details. 
\begin{defin}\label{O2 norm}
We define the functor 
\[N_{D_2}^{O(2)} \colon \thinspace \Assoc_{\sigma}(\Sp_{\cV}^{D_2})\longrightarrow \Sp _{\cU}^{O(2)} \]
as the composite 
$\cI_{\widetilde{\cV}}^{\cU}| B_{\bullet}^{\di} (-)  |.
$ 
Here $\cI_{\widetilde{\cV}}^{\cU}$ denotes the change of universe functor.
\end{defin} 
We then prove that this functor satisfies 
one of the fundamental properties of equivariant norms: in the  commutative setting it is left adjoint to restriction. Let $\cR$ denote the family of subgroups of $O(2)$ which intersect $\bT$ trivially, and let $\Sp ^{O(2),\cR}_{\cU}$ denote the $\cR$-model structure on genuine $O(2)$-spectra~\cite[Appendix B]{HHR} (cf.~\cite[Theorems~2.26, 2.29]{ABGHLM18}).  
 \begin{thm}
 \label{main thm 1}
 The restriction  
 \[ 
 N_{D_2}^{O(2)} \co \Comm(\Sp_{\cV}^{D_2})\to \Comm(\Sp ^{O(2),\cR}_{\cU})
 \]
 of the norm functor $N_{D_2}^{O(2)}$ to genuine commutative $D_2$-ring spectra is left Quillen adjoint to the restriction functor $\iota_{D_2}^*$.
 \end{thm}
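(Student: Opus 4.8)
The plan is to split the verification into a point-set level adjunction and a check that the right adjoint preserves the relevant fibrations. It suffices to (i) produce an adjunction $N_{D_2}^{O(2)} \dashv \iota_{D_2}^{*}$ between the underlying ordinary categories $\Comm(\Sp_{\cV}^{D_2})$ and $\Comm(\Sp_{\cU}^{O(2)})$, and then (ii) show that $\iota_{D_2}^{*}$ carries fibrations and acyclic fibrations of the $\cR$-model structure to fibrations and acyclic fibrations of genuine commutative $D_2$-ring spectra. I would dispatch (ii) first, as it is the softer half: the subgroup $D_{2} \hookrightarrow O(2)$ along which we restrict lies in $\cR$, and by construction a map of $O(2)$-spectra is an $\cR$-fibration (resp.\ $\cR$-equivalence) precisely when its restriction to every subgroup in $\cR$, in particular to $D_{2}$, is a fibration (resp.\ equivalence) of $D_{2}$-spectra; hence $\iota_{D_2}^{*}$ preserves both classes on underlying spectra. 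Since the model structures on commutative rings on the two sides are created by the forgetful functors to the underlying categories of spectra, and $\iota_{D_2}^{*}$ is symmetric monoidal and strictly compatible with those forgetful functors, the conclusion passes to $\Comm$.

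For (i) the strategy is to recognize $N_{D_2}^{O(2)}$, restricted to commutative rings, as the functor that is formally left adjoint to $\iota_{D_2}^{*}$. When $A$ is commutative the $n$-simplices of the dihedral bar construction are $B_{n}^{\di}(A) \cong A^{\otimes (n+1)}$, with the dihedral group $D_{2(n+1)}$ acting by cyclically permuting and reversing the tensor factors, the anti-involution entering only through the ambient $D_{2}$-action on $A$. This identifies $B_{\bullet}^{\di}(A)$ with the result of tensoring $A$ — in the sense in which $\Comm(\Sp_{\cV}^{D_2})$ is tensored over simplicial sets, with finite sets sent to tensor powers — against the simplicial circle carrying its dihedral structure, whose realization is the $O(2)$-space $O(2)/D_{2} \cong S(\bC)$. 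After geometric realization and the change of universe $\cI_{\widetilde{\cV}}^{\cU}$ — which is strong symmetric monoidal and hence induces an adjunction $\cI_{\widetilde{\cV}}^{\cU} \dashv \cI_{\cU}^{\widetilde{\cV}}$ on commutative rings — one obtains that $N_{D_2}^{O(2)}(A)$ is the multiplicative induction of $A$ along $D_{2} \hookrightarrow O(2)$. That this functor is left adjoint to $\iota_{D_2}^{*}$ is then the expected statement that a map $N_{D_2}^{O(2)}(A) \to B$ of commutative $O(2)$-rings is the same datum as an $O(2)/D_{2}$-indexed compatible family of maps out of $A$, i.e.\ a single map $A \to \iota_{D_2}^{*}B$ of commutative $D_{2}$-rings, exactly as for the Hill--Hopkins--Ravenel norm in the case of finite groups. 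Concretely I would construct the unit and counit from the structure maps of the dihedral bar construction together with the adjunction $\cI_{\widetilde{\cV}}^{\cU} \dashv \cI_{\cU}^{\widetilde{\cV}}$, and then verify the triangle identities using the universal property of geometric realization.

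I expect the main obstacle to be controlling the genuine equivariant structure through these operations at the point-set level, rather than merely up to homotopy. One must check that $\cI_{\widetilde{\cV}}^{\cU}|B_{\bullet}^{\di}(A)|$, together with all of the norm (diagonal) maps that are part of the data of a commutative $O(2)$-ring spectrum, genuinely corepresents the functor $B \mapsto \Hom_{\Comm(\Sp_{\cV}^{D_2})}(A, \iota_{D_2}^{*}B)$, naturally in $A$ and compatibly with realization. This is precisely why the target of the left Quillen functor must carry the $\cR$-model structure rather than the fully genuine one: it is in the $\cR$-model structure that $\Comm(\Sp_{\cU}^{O(2)})$ is suitably tensored over $O(2)$-spaces and that the simplicial commutative rings appearing here have homotopically meaningful realizations, whereas the genuine model structure would obstruct the tensoring identification underlying step (i). Granting the point-set adjunction with its monoidal compatibilities, combining it with step (ii) then yields the asserted Quillen adjunction.
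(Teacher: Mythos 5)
Your overall strategy --- split into (ii) showing $\iota_{D_2}^*$ preserves fibrations and acyclic fibrations in the $\cR$-model structure, and (i) exhibiting the norm as a left adjoint via a tensoring interpretation of the dihedral bar construction --- is the same two-part decomposition the paper uses, and step (ii) is essentially identical to the paper's argument. But step (i) glosses over the key technical content, and in a way that would cause the argument to fail if pushed at the point-set level.

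The functor that is literally left adjoint to $\iota_{D_2}^*$ is not $N_{D_2}^{O(2)}$ as defined, but the \emph{relative} tensor $A\otimes_{D_2}O(2)$ of Definition \ref{O2 tensor}: the coequalizer of $A\otimes D_2\otimes O(2)_\bullet\rightrightarrows A\otimes O(2)_\bullet$, where one map uses the left $D_2$-action on the simplicial model $O(2)_\bullet=\operatorname{sq}(D_{2(\bullet+1)})$ and the other uses the $E_0$-$N^{D_2}A$-algebra structure map $N\colon N^{D_2}A\to A$. Your formulation --- ``tensoring $A$ against the simplicial circle'' whose realization is $O(2)/D_2$ --- does not track the anti-involution correctly: the tensoring of a commutative $D_2$-ring over $D_2$-sets sends a $D_2$-set $X$ to the norm $N^X A$, not to a bare smash power, so the reflection in $D_{2(n+1)}$ is carried by the norm structure rather than by ambient structure on the factors. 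One genuinely needs the coequalizer against the $E_0$-norm map, and one needs to tensor with (a model of) $O(2)$ itself rather than with $O(2)/D_2$. Moreover, even after setting this up correctly, $N_{D_2}^{O(2)}A=\cI_{\widetilde{\cV}}^{\cU}|B_\bullet^{\di}(A)|$ and $A\otimes_{D_2}O(2)$ are not identified on the nose --- the comparison goes through Segal--Quillen subdivision and is a natural $\cR$-equivalence (Proposition \ref{equivalence of norm and tensor}, upgraded in Corollary \ref{geom fix points of norm}), not an isomorphism of functors. So the paper does \emph{not} produce a point-set adjunction $N_{D_2}^{O(2)}\dashv\iota_{D_2}^*$; instead it shows that $-\otimes_{D_2}O(2)$ is the point-set left Quillen adjoint and that $N_{D_2}^{O(2)}$ presents the same derived functor. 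Your plan to ``construct the unit and counit from the structure maps of the dihedral bar construction'' and check the triangle identities would not go through as stated for $N_{D_2}^{O(2)}$ itself. Finally, your heuristic for why the target carries the $\cR$-model structure (``so that realizations are homotopically meaningful'') is not quite the reason: the genuine issue is that the dihedral bar construction only computes the correct $O(2)$-homotopy type at subgroups in $\cR$ (and at finite subgroups of $\bT$), so the comparison with $\THR$ and the naturality statements are only $\cR$-equivalences.
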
 
This equivariant norm, $N_{D_2}^{O(2)}A$, is a model for Real topological Hochschild homology, $\THR(A)$. 
 
 In~\cite{DMPPR21}, the authors prove that for a flat ring spectrum  with anti-involution $(R, \omega)$ in the sense of~\cite[Definition~2.6]{DMPPR21}, there is a stable equivalence of $D_2$-spectra
 \[
 \iota^*_{D_2}\THR(R) \simeq R \wedge^{\bL}_{N_e^{D_2} \iota_e^* R} R.
 \]
We generalize this result by proving a multiplicative double coset formula for the norm $N_{D_2}^{O(2)}$.   
\begin{thm}[Multiplicative Double Coset Formula]\label{Introthm:doublecoset}
Let $\zeta$ denote the $2m$-th root of unity $e^{2\pi i/2m}$. 
When $R$ is a flat $E_{\sigma}$-ring and $m$ is a positive integer, there is a stable equivalence of $D_{2m}$-spectra 
\[ \iota_{D_{2m}}^*N_{D_2}^{O(2)}R \simeq  N_{D_2}^{D_{2m}}R \wedge^{\bL}_{N_e^{D_{2m}}\iota_e^*R} N_{\zeta D_2\zeta^{-1}}^{D_{2m}}c_{\zeta}R. \]
\end{thm}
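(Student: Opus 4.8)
The strategy is to compute $\iota_{D_{2m}}^*N_{D_2}^{O(2)}R=\iota_{D_{2m}}^*\cI_{\widetilde{\cV}}^{\cU}|B_\bullet^{\di}(R)|$ directly, by means of a $2m$-fold edgewise subdivision of the dihedral bar construction; this generalizes the $m=1$ double coset formula of \cite{DMPPR21}. First I would reduce to the dihedral bar construction: since geometric realization is a colimit and both $\cI_{\widetilde{\cV}}^{\cU}$ and $\iota_{D_{2m}}^*$ are left adjoints which commute with change of universe up to natural isomorphism (here one uses that $D_{2m}$ is finite, so restrictions of the relevant universes are again complete enough), it suffices to identify $\big|\iota_{D_{2m}}^*B_\bullet^{\di}(R)\big|$, together with an appropriate change of $D_{2m}$-universe, with the right-hand side of the formula. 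The flatness hypothesis on $R$ guarantees that norms of the underlying flat objects are again flat, so that point-set level two-sided bar constructions of such norms compute the corresponding derived relative smash products, and so that realization commutes with restriction.

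Next I would invoke edgewise subdivision. For any dihedral object $Y_\bullet$ there is a natural $D_{2m}$-equivariant homeomorphism $|\operatorname{sd}_{2m}Y_\bullet|\cong|Y_\bullet|$, where on the left the $D_{2m}$-action is induced \emph{levelwise} from the dihedral structure of $\operatorname{sd}_{2m}Y_\bullet$ — the rotation subgroup $C_m\subset D_{2m}$ permuting the $2m$ blocks of simplices cyclically and the reflections acting through the dihedral structure (compare the cyclic case in \cite{BHM93}). The heart of the argument is then the claim that there is an isomorphism of simplicial $D_{2m}$-spectra
\[
\operatorname{sd}_{2m}\iota_{D_{2m}}^*B_\bullet^{\di}(R)\;\cong\;B_\bullet\!\big(N_{D_2}^{D_{2m}}R,\;N_e^{D_{2m}}\iota_e^*R,\;N_{\zeta D_2\zeta^{-1}}^{D_{2m}}c_\zeta R\big),
\]
the two-sided monoidal bar construction of $N_e^{D_{2m}}\iota_e^*R$ acting on the two outer norms. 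In simplicial degree $q$ the left side is $B_{2m(q+1)-1}^{\di}(R)=R^{\wedge 2m(q+1)}$, whose smash factors are indexed by the cycle $\bZ/2m(q+1)\bZ$; under the levelwise $D_{2m}$-action the subgroup $C_m$ acts freely, the $m$ reflections of $D_{2m}$ are exactly the ``vertex-type'' reflections of this cycle fixing the $2m$ vertices in positions divisible by $q+1$, and those $2m$ vertices split into two free $C_m$-orbits — one of stabilizer $D_2$ and one of stabilizer $\zeta D_2\zeta^{-1}$ (with $\zeta^2$ the generator of $C_m$) — while the remaining $2mq$ factors form $q$ free $D_{2m}$-orbits. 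Pushing this decomposition of $D_{2m}$-sets through the Hill--Hopkins--Ravenel norm identifies degree $q$ with $N_{D_2}^{D_{2m}}R\wedge(N_e^{D_{2m}}\iota_e^*R)^{\wedge q}\wedge N_{\zeta D_2\zeta^{-1}}^{D_{2m}}c_\zeta R$; the conjugate $c_\zeta R$ appears at the second end precisely because the dihedral reflection acts there by the anti-involution $\omega$ of the $E_\sigma$-ring transported by $\zeta$. One then checks that the subdivided face and degeneracy maps match the bar differentials, the two outer faces becoming the right- and left-module structure maps of $N_e^{D_{2m}}\iota_e^*R$, which are assembled from the multiplication and the anti-involution of $R$.

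Finally, taking geometric realizations in the displayed isomorphism, using $|B_\bullet(M,A,N)|\simeq M\wedge^{\mathbb{L}}_AN$ (valid by the flatness observations above) and combining with the subdivision homeomorphism and the reduction of the first paragraph, yields the asserted stable equivalence of $D_{2m}$-spectra; specializing to $m=1$ recovers $\iota_{D_2}^*\THR(R)\simeq R\wedge^{\mathbb{L}}_{N_e^{D_2}\iota_e^*R}R$ from \cite{DMPPR21}. The hard part will be the isomorphism of simplicial $D_{2m}$-spectra claimed in the second paragraph: the explicit combinatorial matching of the $2m$-fold edgewise subdivision of the dihedral bar construction with the two-sided bar construction \emph{as a diagram of $D_{2m}$-spectra}. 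The two delicate points are (i) pinning down which reflections of the cycle make up the copy of $D_{2m}$ and why the two special orbits carry the stabilizers $D_2$ and $\zeta D_2\zeta^{-1}$ — this is what forces $c_\zeta R$ rather than $R$ at the second end, and makes the statement uniform in the parity of $m$ — and (ii) verifying that the outer face maps encode the correct module structures, since these record the interaction of the dihedral reflection with the norm and with the anti-involution $\omega$. A secondary technical issue is the flatness and cofibrancy bookkeeping needed to ensure that realization computes the derived relative smash product and that $\iota_{D_{2m}}^*$, the change of universe, and $\operatorname{sd}_{2m}$ commute as used.
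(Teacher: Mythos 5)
Your proposal follows essentially the same route as the paper's proof (Proposition \ref{charofdibar} together with Theorem \ref{thm:doublecoset}): subdivide the dihedral bar construction to obtain a simplicial $D_{2m}$-spectrum, identify the $k$-simplices $R^{\wedge 2m(k+1)}$ with the $k$-simplices of the two-sided bar construction via the orbit decomposition of the totally ordered $D_{2m}$-set $\mu_{2m(k+1)}\cong D_{2m}/D_2\amalg D_{2m}^{\amalg k}\amalg D_{2m}/\zeta D_2\zeta^{-1}$, match face and degeneracy maps, and then use flatness so that realization computes the derived relative smash product. Your observations about the conjugate $c_\zeta R$ at the second end and the flatness of $N_{D_2}^{D_{2m}}R$ as an $N_e^{D_{2m}}\iota_e^*R$-module are exactly the points the paper addresses (the latter via \cite{Sto11} and \cite{BDS18}).

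One terminological slip worth flagging: you call the subdivision $\operatorname{sd}_{2m}$, the $2m$-fold edgewise subdivision, but what is actually needed — and what your description of the $D_{2m}$-action amounts to — is the \emph{dihedral} subdivision $\operatorname{sd}_{D_{2m}}:=\operatorname{sq}\circ\operatorname{sd}_m$ of Spaliński, as in Section \ref{subdivision}. The edgewise subdivision $\operatorname{sd}_{2m}$ alone only equips the subdivided object with a simplicial $\mu_{2m}$-action; the Real structure $\omega$ satisfies $d_i\omega_n=\omega_{n-1}d_{n-i}$ and so is not itself a simplicial $D_2$-action. It is precisely the Segal--Quillen subdivision $\operatorname{sq}$ that converts the Real structure into a simplicial involution, and the composite $\operatorname{sq}\circ\operatorname{sd}_m$ is what gives the simplicial $D_{2m}$-action you describe, with $\mu_m$ acting by $2(k+1)$-step rotations in simplicial degree $k$. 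Since the description you give of the action is of $\operatorname{sd}_{D_{2m}}$ and not of $\operatorname{sd}_{2m}$, this is a naming issue and not a substantive gap.
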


Ordinary topological Hochschild homology is the topological analogue of classical  
Hochschild homology of algebras.  
The two theories are related by a linearization map.
\[
\pi_k\THH(R) \to \HH_k(\pi_0R),
\]
which is an isomorphism in degree 0, where $R$ is a $(-1)$-connected ring spectrum. It is natural to ask, then, what is the algebraic analogue of Real topological Hochschild homology? In this paper, we define such an analogue: a theory of Real Hochschild homology for rings with anti-involution, or more generally for discrete $E_{\sigma}$-rings. 
A discrete $E_{\sigma}$-ring is a type of $D_2$-Mackey functor that arises as the algebraic analogue of $E_{\sigma}$-rings in spectra (cf. Definition~\ref{defn:discreteEsigma}). Indeed, if $R$ is an $E_{\sigma}$-ring in spectra, $\m{\pi}_0^{D_2}(R)$ is a discrete $E_{\sigma}$-ring. 
\begin{defin}
The \emph{Real $D_{2m}$-Hochschild homology} of a discrete $E_{\sigma}$-ring $\underline{M}$ is the graded $D_{2m}$-Mackey functor
\[ \HR_*^{D_{2m}}(\m{M}) = H_*\left ( \HR_{\bullet}^{D_{2m}}(\m{M}) \right ). \]
where 
\[ \HR_{\bullet}^{D_{2m}}(\m{M}) =B_{\bullet}(N_{D_2}^{D_{2m}}\m{M},N_{e}^{D_{2m}}\iota_e^*\m{M}, N_{\zeta D_2 \zeta^{-1}}^{D_{2m}}c_{\zeta}\m{M} ).\]
\end{defin}
\noindent This theory of Real Hochschild homology is computable using homological algebra for Mackey functors. We prove that Real topological Hochschild homology and Real Hochschild homology are related by a linearization map.

 \begin{prop}
 For any $(-1)$-connected  $E_\sigma$-ring $A$, we have a natural homomorphism
 \[
 \m{\pi}_k^{D_{2m}} \THR(A) \longrightarrow  \HR_k^{D_{2m}}(\m{\pi}_0^{D_2}A). 
 \]
 which is an isomorphism when $k=0$.
 \end{prop}
\noindent This relationship facilitates the computation of Real topological Hochschild homology. As an example, we compute the degree zero $D_{2m}$-Mackey functor homotopy groups of $\THR(H\mZ)$, for odd $m$. When restricted to $\underline{\pi}_0^{D_{2}}(\THR(H\mZ)^{D_{2p^k}}),$ 
this computation recovers a computation of~\cite{DMP22}, proven by different methods.

As part of this computation, we do some of the first calculations to appear in the literature of dihedral norms for Mackey functors. In doing so, we establish a Tambara reciprocity formula for sums for general finite groups.
\begin{thm}[Tambara reciprocity for sums]
     Let \(G\) be a finite group and \(H\) a subgroup, and let \(\m{R}\) be a \(G\)-Tambara functor. For each \(F\in\Map^H\big(G,\{a,b\}\big)\), let \(K_F\) be the stabilizer of \(F\). Then for any \(a,b\in\m{R}(G/H)\), we have

\[ N_H^G(a+b)= \sum_{[F]\in\Map^H(G,\{a,b\})/G} tr_{K_F}^G\left(
    \prod_{[\gamma]\in K_F\backslash G\slash H}\!\! N_{K_F\cap (\gamma^{-1}H\gamma)}^{K_F}\Big(\gamma res_{H\cap (\gamma K\gamma^{-1})}^{H}\big(F(\gamma^{-1})\big)\Big)
    \right)
    \]
\end{thm}
In the case of dihedral groups, this leads to a very explicit formula for Tambara reciprocity for sums (cf. Lemma~\ref{lem:reciprocity}), facilitating the computation of dihedral norms.

Our definition of Real Hochschild homology also 
leads to a definition of Witt vectors for rings with anti-involution. Classically, Witt vectors are closely related to topological Hochschild homology. Indeed, in~\cite{HM97} Hesselholt and Madsen show that for a commutative ring $R$
\[
\pi_0(\THH(R)^{C_{p^n}}) \cong \W_{n+1}(R;p),
\]
 where $\W_{n+1}(R;p)$ denotes the length $n+1$ $p$-typical Witt vectors of $R$. Further, in~\cite{Hes97}, Hesselholt generalized the theory of Witt vectors to non-commutative rings and showed that for any associative ring $R$ there is a relationship between Witt vectors and topological cyclic homology,
\[
\TC_{-1}(R;p) \cong \W(R;p)_F.
\]
Here $W(R)_F$ denotes the coinvariants of the Frobenius endomorphism on the $p$-typical Witt vectors $W(R;p)$.

In our work, we prove analogous results for rings with anti-involution. We provide a Real cyclotomic structure on Real Hochschild homology, and use this to define Witt vectors of rings with anti-involution $R$, denoted $\mW(R;p)$, (cf, Definition~\ref{def: Witt vectors}).  As a consequence of this work, we show that $\upi_{-1}\TCR(R)$ can be described purely in terms of equivariant homological algebra. 
 \begin{thm}
 Let $A$ be an $E_{\sigma}$-ring and assume $R^1\lim_{k}\pi_1^{C_2}\THR(A)^{\mu_{p^k}}=0$.
 There is an isomorphism
\[ \upi_{-1}\TCR(A;p)\cong \mW(\upi_0^{D_2}A;p)_{F}.\]
 where $\mW(A;p)_{F}$ is the coinvariants of an  operator 
 \[ 
     F\colon \thinspace \mW(\upi_0^{D_2}A;p)\longrightarrow  \mW(\upi_0^{D_2}A;p).
 \]
 \end{thm}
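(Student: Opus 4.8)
The plan is to transport Hesselholt's identification of $\TC_{-1}(R;p)$ with the Frobenius coinvariants $\W(R;p)_{F}$ \cite{Hes97} to the Real setting, replacing ordinary homotopy groups by $D_{2}$-Mackey functor homotopy groups and the classical cyclotomic structure by the Real one. I would work with Real $\TR$,
\[
\TRR(A;p)\;=\;\holim_{k}\bigl(\THR(A)^{\mu_{p^{k}}},\,R\bigr),
\]
assembled as a genuine $D_{2}$-spectrum from the Real fixed point spectra and their Real restriction maps, together with the Real Frobenius $F\colon\TRR(A;p)\to\TRR(A;p)$, and use the fiber sequence
\[
\TCR(A;p)\;\longrightarrow\;\TRR(A;p)\;\xrightarrow{\,1-F\,}\;\TRR(A;p)
\]
coming from the definition of $\TCR$ (the Real analogue of $\TC(R;p)=\operatorname{fib}(1-F\colon\TR(R;p)\to\TR(R;p))$; cf.\ \cite{DMPPR21, Hog16}). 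Applying $\upi_{*}$ yields a long exact sequence of $D_{2}$-Mackey functors containing
\[
\upi_{0}\TRR(A;p)\;\xrightarrow{\,1-F\,}\;\upi_{0}\TRR(A;p)\;\longrightarrow\;\upi_{-1}\TCR(A;p)\;\longrightarrow\;\upi_{-1}\TRR(A;p),
\]
so the proof reduces to two inputs: (i) a natural isomorphism $\upi_{0}\TRR(A;p)\cong\mW(\upi_{0}^{D_{2}}A;p)$ carrying the topological Frobenius to the Witt-vector operator $F$ of Definition~\ref{def: Witt vectors}, and (ii) the vanishing $\upi_{-1}\TRR(A;p)=0$.

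For (i) I would argue that $\upi_{0}\THR(A)^{\mu_{p^{k}}}$ depends only on the discrete $E_{\sigma}$-ring $\upi_{0}^{D_{2}}A$ and is naturally isomorphic to the length-$(k+1)$ Real Witt vectors $\mW_{k+1}(\upi_{0}^{D_{2}}A;p)$ compatibly with the restriction maps, using the degree-zero linearization isomorphism $\upi_{0}^{D_{2m}}\THR(A)\cong\HR_{0}^{D_{2m}}(\upi_{0}^{D_{2}}A)$ established above and the Real cyclotomic structure on Real Hochschild homology underlying Definition~\ref{def: Witt vectors}. Passing to the homotopy limit over $k$ (and checking, as in the non-Real case, that the $R^{1}\lim$ of the $\upi_{1}\THR(A)^{\mu_{p^{k}}}$-terms in the Milnor sequence does not interfere) gives $\upi_{0}\TRR(A;p)\cong\mW(\upi_{0}^{D_{2}}A;p)$; naturality of the Real cyclotomic/Frobenius structures, again via the linearization map, identifies the induced self-map with $F$.

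For (ii) I would use the Milnor exact sequence of the tower, interpreted levelwise for $D_{2}$-Mackey functors:
\[
0\;\longrightarrow\;R^{1}\!\lim_{k}\upi_{0}\THR(A)^{\mu_{p^{k}}}\;\longrightarrow\;\upi_{-1}\TRR(A;p)\;\longrightarrow\;\lim_{k}\upi_{-1}\THR(A)^{\mu_{p^{k}}}\;\longrightarrow\;0.
\]
Connectivity of $\THR(A)$ makes every $\THR(A)^{\mu_{p^{k}}}$ connective (genuine fixed points at the finite subgroups $\mu_{p^{k}}$, $D_{2p^{k}}$ of a connective $O(2)$-spectrum are connective), so $\upi_{-1}\THR(A)^{\mu_{p^{k}}}=0$ and the right-hand term vanishes. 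The $R^{1}\lim$ term is checked on the two Mackey functor levels: at $D_{2}/e$ it is $R^{1}\lim_{k}\pi_{0}\THH(\iota_{e}^{*}A)^{\mu_{p^{k}}}\cong R^{1}\lim_{k}\W_{k+1}(\pi_{0}\iota_{e}^{*}A;p)=0$, the restriction maps on truncated Witt vectors being surjective; at $D_{2}/D_{2}$ it is $R^{1}\lim_{k}\pi_{0}^{C_{2}}\THR(A)^{\mu_{p^{k}}}$, which is zero by hypothesis. Hence $\upi_{-1}\TRR(A;p)=0$, and the long exact sequence gives $\upi_{-1}\TCR(A;p)\cong\operatorname{coker}\bigl(1-F\colon\mW(\upi_{0}^{D_{2}}A;p)\to\mW(\upi_{0}^{D_{2}}A;p)\bigr)=\mW(\upi_{0}^{D_{2}}A;p)_{F}$.

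The hard part will be input (i): showing that $\upi_{0}$ of Real $\TR$ is the Real Witt vectors \emph{with the correct Frobenius}. This is the Real counterpart of the already substantial classical fact $\pi_{0}\TR(R;p)\cong\W(R;p)$, and it is precisely where the new machinery of this paper --- Real Hochschild homology, its Real cyclotomic structure, and Definition~\ref{def: Witt vectors} --- has to be brought to bear; everything else (the precise genuine model for the $\TCR$ fiber sequence, and the Mackey-functor $R^{1}\lim$ bookkeeping in (ii)) is routine given the hypothesis and the connectivity of $A$.
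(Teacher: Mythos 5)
Your proposal takes essentially the same route as the paper: the long exact sequence of the fiber sequence defining $\TCR$, the identification $\upi_0\TRR(A;p)\cong\mW(\upi_0^{D_2}A;p)$ from Theorem~\ref{witt thr}, the matching of Frobenius operators, and reading off $\upi_{-1}\TCR$ as a cokernel. You are, however, more careful than the paper about one step that the conclusion genuinely requires: the vanishing of $\upi_{-1}\TRR(A;p)$. The long exact sequence \emph{a priori} only exhibits $\upi_{-1}\TCR$ as an extension of $\ker(1-F)$ on $\upi_{-1}\TRR$ by $\coker(1-F)$ on $\upi_0\TRR$, and the paper's proof says nothing about why the former vanishes; your Milnor-sequence argument (connectivity of the $\THR(A)^{\mu_{p^k}}$, the $R^1\lim$ hypothesis at level $D_2/D_2$, and surjectivity of restriction on ordinary truncated Witt vectors at level $D_2/e$) fills in exactly this missing step, and it also explains why the hypothesis of this theorem is stated only at the top level of the Mackey functor even though Theorem~\ref{witt thr} nominally asks for the full Mackey-functor $R^1\lim$ to vanish.
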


\subsection{Conventions}
Let $\Top$ denote the category of based compactly generated weak Hausdorff spaces. We refer to objects in $\Top$ as spaces and morphisms in $\Top$ as maps of spaces. Let $G$ be a compact Lie group. Then $\Top^{G}$ denotes the category of based $G$-spaces and based $G$-equivariant maps of spaces. For a $G$-universe $\cU$, let $\Sp_{\cU}^{G}$ be the category of orthogonal $G$-spectra indexed on $\cU$~\cite[II.~2.6]{MM02}.  

\subsection{Acknowledgements}
The first author would like to thank V. Boelens, E. Dotto, I. Patchkoria, and H. Reich for helpful conversations. The second author would like to thank C. Lewis for helpful conversations. 
The second author was supported by NSF grants DMS-1810575, DMS-2104233, and DMS-2052042. The third author was supported by NSF grants DMS-2105019 and DMS-2052702. Some of this work was done while the second author was in residence at the Mathematical Sciences Research Institute in Berkeley, CA (supported by the National
Science Foundation under grant DMS-1440140) during the Spring 2020 semester. Angelini-Knoll is grateful to Max Planck Institute for Mathematics in Bonn for its hospitality and financial support.
This project has received funding from the European Union's Horizon 2020 research and innovation programme under the Marie Sk\l{}odowska-Curie grant agreement No 1010342555.
\thinspace \includegraphics[scale=0.1]{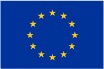}

\section{Real topological Hochschild homology via the norm}\label{sec: norm}
 Recall that the third author with Hopkins and Ravenel~\cite{HHR} define multiplicative norm functors from $H$-spectra to $G$-spectra
 \[
  N_H^G: \Sp^H \to \Sp^G
 \]
 for a finite group $G$ and subgroup $H$. It is shown in~\cite{ABGHLM18} and~\cite{BDS18} that one can extend this construction to a norm to the circle group, $N_e^{\bT}$, on associative ring orthogonal spectra. The functor 
 \[
 N_e^{\bT}\co \Assoc(\Sp) \to \Sp^{\bT}_U
 \]
 is defined via the cyclic bar construction, $N_e^{\bT}(R) = \mathcal{I}_{\mathbb{R}^{\infty}}^U|B^{cyc}_{\wedge}R|.$ It follows from this construction of $N_{e}^{\bT}$ that topological Hochschild homology can be viewed as a norm from the trivial group to $\bT$. 

In this section, 
we consider the analogous story for Real topological Hochschild homology. In particular, 
we define a norm functor $N_{D_2}^{O(2)}$ using the dihedral bar construction and 
we prove that the norm satisfies the adjointnesss properties that 
one would expect from an equivariant norm~\cite{HHR}. As Real topological Hochschild homology can be constructed via the dihedral bar construction~\cite{DMPPR21}, this gives a characterization of $\THR(A)$ as an equivariant norm $N_{D_2}^{O(2)}(A).$ 

We begin by fixing some conventions. Let $O(2)$ denote the compact Lie group of two-by-two orthogonal matrices. The determinant map determines an extension 
 \[ 1\longrightarrow \bT \longrightarrow O(2) \overset{\operatorname{det}}{\longrightarrow} \{-1,1\} \longrightarrow  1\]
 of groups. We choose a splitting by sending $-1$ to the matrix $\tau :=  ( \tiny\begin{array}{cc} 0& 1 \\ 1 & 0\normalsize \end{array} )$ and write $D_2$ for the subgroup of $O(2)$ generated by $\tau$. We then write $\mu_m\subset \bT$ for the subgroup of $m$-th roots of unity generated by $\zeta_m=e^{2\pi i/m}$. 
 Finally, we fix a presentation
 $D_{2m}=\langle \tau, \zeta_m \mid  \tau^2=\zeta_m^m=(\tau\zeta_m)^2 = 1\rangle$
 for the dihedral group of order $D_{2m}$ regarded as a subgroup of $\bT\ltimes D_2=O(2)$.

The input for Real topological Hochschild homology is a ring spectrum with anti-involution. These can alternatively be described as $E_{\sigma}$-rings, algebras in $D_2$-spectra over an $E_{\sigma}$-operad, where $\sigma$ is the sign representation (cf.~\cite{Hil17} for more details on the theory of $E_{\sigma}$-operads). For convenience, we now unpack the definition of an $E_{\sigma}$-ring. 

\begin{defin}
By an $E_1$-ring $A$, we mean an algebra over the associative operad $\Assoc$ in the category of orthogonal spectra $\Sp$. By an $E_0$-$A$-algebra we mean an $A$-bimodule $M$ equipped with a map $A\to M$ of $A$-bimodules (cf.~\cite[Rem. 2.1.3.10]{HA}). 
\end{defin}

 \begin{cor} 
 An $E_{\sigma}$-ring $R$ is exactly a $D_2$-spectrum $R$ such that 
 \begin{enumerate}
 \item the spectrum $\iota_e^*R$ is an $E_1$-ring with anti-involution, denoted  $\tau \colon \thinspace \iota_e^*R^{\textup{op}}\longrightarrow \iota_e^*R$, given by the action of the generator of the Weyl group,
 \item the spectrum $R$ is an $E_0$-$N^{D_2}_e \iota_e^*R$-algebra and applying $\iota_e^*$ to the $E_0$-$N^{D_2}_e \iota_e^*R$-algebra structure map gives $\iota_e^*R$ the standard $E_0$-$\iota_e^*R\wedge \iota_e^*R^{\op}$-algebra structure.
 \end{enumerate}
 \end{cor}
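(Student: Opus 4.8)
The plan is to prove this as a characterization, extracting both conditions from the two descriptions of $\mathcal{P}_\sigma(X)$ established in the preceding Proposition, and then checking that the converse data reassembles an $E_\sigma$-algebra structure. First I would recall that an $E_\sigma$-ring $R$ is by definition an algebra over the monad $\mathcal{P}_\sigma$ associated to $\mathcal{O}_{\sigma,n} = \Assoc_{\sigma,n}$, so it suffices to identify $\mathcal{P}_\sigma$-algebra structures with the listed data. The key input is the splitting $\mathcal{P}_\sigma(X) = T(N^{D_2}X)\wedge(S\vee X)$ proved above, together with its unstable source: on underlying spectra $\iota_e^*\Assoc_{\sigma,n} \simeq \Sigma_n$ as a free $\Sigma_n$-space, so $\iota_e^*\mathcal{P}_\sigma(X) = \coprod_{n\geq 1}(\iota_e^*X)^{\otimes n}$ is the free (non-unital, then unitalized) associative monad — this gives condition (1), that $\iota_e^*R$ is an $E_1$-ring, and the Weyl-group $D_2$-action on $\iota_e^*R$ supplies the anti-involution $\tau$ exactly as in Remark \ref{rem: ring with anti-involution}, because the $\tau$-action permutes the smash factors by $i\mapsto n+1-i$ which on the level of the associative product is the opposite-multiplication twist.

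Next I would unpack the genuine $D_2$-structure via the equivariant factor $T(N^{D_2}X)$. The point is that the free $\mathcal{P}_\sigma$-algebra decomposes, after regrouping smash factors by the number of norm-factors (as in the proof of the Proposition), as a tensor algebra on $N^{D_2}X$ smashed with $(S\vee X)$; the $(S\vee X)$ factor is precisely the data of an $E_0$-module structure (a unit map $S\to (-)$ together with a generator $X\to(-)$), and $T(N^{D_2}X)$ is the free associative algebra on $N^{D_2}R$. Hence a $\mathcal{P}_\sigma$-algebra structure on $R$ is the same as making $R$ an $E_0$-algebra over $N^{D_2}R$. This is condition (2). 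The compatibility clause — that $\iota_e^*$ of this $E_0$-$N^{D_2}R$-algebra structure recovers the standard $E_0$-$\iota_e^*R\wedge\iota_e^*R^{\op}$-structure of Example \ref{standard E0 ring} — follows by tracing the isomorphism $\iota_e^* N^{D_2}R \cong R\wedge R^{\op}$ (the diagonal formula for norms restricted to $e$) through the operad action: the left/right $N^{D_2}R$-actions restrict to the two multiplications $\varphi_L,\varphi_R$, which is forced because the $\Assoc_\sigma$-operad structure maps restrict to the ordinary associative operad structure maps, and the braiding $B$ appearing in $\varphi_L,\varphi_R$ is exactly the shuffle by which $\tau$ acts on the smash factors.

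For the converse, given a $D_2$-spectrum $R$ satisfying (1) and (2), I would reconstruct the $\mathcal{P}_\sigma$-algebra structure map $\mathcal{P}_\sigma(R)\to R$ factorwise along the decomposition $\mathcal{P}_\sigma(R) = T(N^{D_2}R)\wedge(S\vee R)$: use the $E_1$-structure of $\iota_e^*R$ together with its anti-involution to define the genuine multiplication $N^{D_2}R\to R$ (this is where one invokes that an $E_1$-ring with anti-involution is precisely an $E_0$-$N^{D_2}$-algebra with the standard restriction, essentially the $D_2$-equivariant refinement of "$A\wedge A^{\op}\to A$"), then extend to the tensor algebra by associativity, and smash with the $E_0$-structure on the $(S\vee R)$ factor; finally check the monad unit and associativity axioms, which reduce to the associativity already built into $T(-)$ and the $E_0$-axioms. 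The main obstacle is the converse direction's coherence: one must verify that the two pieces of data in (1) and (2) do not conflict and genuinely patch to a single operad action rather than just the separate $e$- and genuine-level structures — concretely, that the $\Assoc_\sigma$-operadic composition is entirely determined by the ordinary associative operad together with the $D_2$-norm diagonal, so that no additional higher coherence data is needed. This is plausible because $\Assoc_{\sigma,n} = (\Sigma_n\times D_2)/\Gamma_n$ is (equivariantly) a free point, hence the operad is "as discrete as possible", but making the patching precise is the step I expect to require the most care.
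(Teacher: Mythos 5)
Your approach is the one the paper implicitly intends: the corollary is stated immediately after, and is meant to follow from, the decomposition $\mathcal P_\sigma(X) = T(N^{D_2}X)\wedge(S\vee X)$ together with the remark that on underlying spectra this is the tensor algebra ``unfolded'' via left/right multiplication. Reading off condition (1) from $\iota_e^*\mathcal P_\sigma$ being the free associative monad (because $\iota_e^*\Assoc_{\sigma,n}\cong\Sigma_n$ freely), and condition (2) from the genuine decomposition by number of norm-factors, is exactly right, and your identification of the braiding/shuffle with the standard $E_0$-$A\wedge A^{\op}$-structure from the paper's Example is the correct mechanism for the compatibility clause.

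Two caveats worth sharpening. First, your phrasing ``the $(S\vee X)$ factor is precisely the data of an $E_0$-module structure'' conflates a summand of the \emph{monad} with a \emph{structure} on $R$: what you actually want to say is that, via the decomposition $\mathcal P_\sigma(R) \cong T(N^{D_2}R)\vee\bigl(T(N^{D_2}R)\wedge R\bigr)$, the restrictions of the $\mathcal P_\sigma$-algebra structure map to $S$, $N^{D_2}R$, $N^{D_2}R\wedge R$, etc.\ produce the unit, the $E_0$-structure map $N^{D_2}R\to R$, and the bimodule action respectively, with the monad multiplication enforcing the $E_0$-algebra axioms. Second, the parenthetical ``an $E_1$-ring with anti-involution is precisely an $E_0$-$N^{D_2}$-algebra with the standard restriction'' overclaims: the former is non-equivariant data on $\iota_e^*R$ and the latter is genuine $D_2$-equivariant data, and the whole point of the corollary is that the $E_\sigma$-structure is the \emph{pair} (1) and (2), with (1) needed to make $N^{D_2}R$ an associative algebra so that (2) is even well-posed. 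Neither of these is a fatal gap — your final paragraph correctly identifies the coherence check as the remaining work, and the reason it goes through is, as you say, that $\Assoc_{\sigma,n}$ is equivariantly a free orbit, so the operad is determined by its arities $\le 2$ — but as written the middle paragraph reads as if (2) alone suffices.
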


 \begin{exm}\label{exm: involution implies sigma}
  Given an $E_1$-ring with anti-involution, regarding $R$ as an object in $\Sp_{\cV}^{D_2}$ produces an $E_\sigma$-ring structure on $R$. 
 \end{exm}
 \begin{exm}\label{exm: commutative implies sigma}
 If $R$ is in $\Comm(\Sp_{\cV}^{D_2})$, then $R$ is an $E_\sigma$-ring. 
 \end{exm}

 For an $E_{\sigma}$-ring $A$, we now recall the definition of the dihedral bar construction on $A$. 

 \begin{defin}
 For $A$ an $E_{\sigma}$-ring, the dihedral bar construction $B^{\di}_{\bullet}(A)$ has $k$-simplices
 \[
 B^{\di}_{k}(A) = A \wedge A^{\wedge k}.
 \]
 It has the same simplicial structure maps and cyclic operator as the cyclic bar construction, $B^{\cy}_{\bullet}(A)$, with the addition of a levelwise involution $\omega_{k}$ acting on the $k$-simplices. To specify the levelwise involution, let $\bfk$ be the $D_2$-set $\{1,2,\ldots ,k\}$ with the generator $\alpha$ of $D_2$ acting by $\alpha(i)=k-i+1$ for $1\le i \le k$. Then we define the action of $\omega_{k}$ by the composite 
 \[ 
 	\xymatrix{ 
 		\omega_{k} \co A \wedge A^{\wedge\bfk} \ar[rr]^{\id\wedge A^{\alpha} } &&A \wedge A^{\wedge \bfk}\ar[rr]^{ \tau \wedge \tau \wedge \ldots \wedge \tau} && M \wedge A^{\wedge \bfk}
 		}
 \] 
 where $A^{\alpha}$ is the automorphism of $A^{\bfk}$ induced by $\alpha\co \bfk\to \bfk$. 
 It is straightforward to check that the structure maps satisfy the usual simplicial and cyclic identities, together with the additional relations
 \begin{align*}
 d_i\omega_{k}=&\omega_{k-1}d_{k-i} \text{ for } 0\le i \le k \\
  s_i\omega_{k}=&\omega_{k+1}s_{k-i}\text{ for } 0\le i \le k \\
 \omega_{k}t_{k}=&t_{k}^{-1}\omega_{k} \hspace{.5cm} 
 \end{align*}
 Therefore, the dihedral bar construction is a dihedral object in the sense of~\cite{Loday98}, and hence by~\cite[Theorem~5.3]{FL91} its geometric realization has an action of $O(2)$.
 \end{defin}

\begin{lem}\label{lem:O(2)}
There is a functor 
$|B_{\bullet}^{\di}(-)|\colon \thinspace  \Assoc_\sigma(\Sp_{\cV}^{D_2})\to \Sp_{\widetilde{\cV}}^{O(2)}.$
\end{lem}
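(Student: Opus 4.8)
The plan is to unwind the definitions and show that each piece of the dihedral bar construction assembles into an $O(2)$-equivariant object functorially in the $E_\sigma$-ring. First I would recall from Section \ref{sec:dihedral as D_2-diagrams} that the dihedral bar construction $B^{\di}_\bullet(A)$ of an $E_\sigma$-ring $A$, being a dihedral object in $\Sp^{D_2}_{\cV}$, corresponds under Corollary \ref{iso of models for dihedral objects} to a Real cyclic object, i.e. a functor $BD_2\smallint\lambda\to\Sp$; equivalently, via Example \ref{Ex:Lambda} and the Grothendieck construction, to a $D_2$-diagram $\Lambda^{\op}\to\Sp$ whose underlying simplicial object (after restriction along $\iota\colon\Delta\to\Lambda$) is $B^{\di}_\bullet(A)$ with its levelwise $D_2$-action via the $\omega_k$. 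The key input is that the $k$-simplices $B^{\di}_k(A)=A\otimes A^{\otimes k}$ carry a $D_{2(k+1)}$-action compatible with the dihedral structure maps, which is exactly the content of the verification of the dihedral identities in Section \ref{sec:dihedralbar}.

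Next I would apply geometric realization. By Theorem \ref{action on realization} (Fiedorowicz--Loday) and its corollary, the realization of a dihedral object acquires a natural $O(2)$-action; more precisely, in the $D_2$-diagram formulation, realizing the $D_2$-diagram $\iota^*B^{\di}_\bullet(A)\colon\Delta^{\op}\to\Sp$ produces a genuine $D_2$-spectrum $|B^{\di}_\bullet(A)|\in\Sp^{D_2}_{\cV}$ (as recalled just before Lemma \ref{free C2 cyclic object}), and Lemma \ref{free C2 cyclic object} together with the remark following it shows that the additional $\bT$-action coming from the cyclic structure is $D_2$-equivariant for the conjugation action on $\bT$, hence combines with the $D_2$-action to a genuine $O(2)$-action. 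One must note that the $\bT$-universe appearing is $\widetilde{\cV}$, the inflation of $\cV$ along the determinant, which is why the target is $\Sp^{O(2)}_{\widetilde{\cV}}$ rather than an arbitrarily indexed $O(2)$-spectrum. Functoriality in $A$ is automatic: a map of $\Assoc_\sigma$-algebras induces a map of dihedral objects, hence of their realizations, compatibly with all the equivariant structure.

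The main obstacle, such as it is, is bookkeeping rather than mathematics: one must check that the levelwise $D_{2(k+1)}$-actions on $B^{\di}_k(A)$ genuinely organize into a functor out of (the opposite of) the dihedral category $\Xi$ — i.e., that the $\omega_k$ together with the cyclic operators $t_k$ satisfy the crossed-simplicial-group relations \eqref{dih cat face maps}, \eqref{dih cat deg maps}, and $\omega_k t_k=t_k^{-1}\omega_k$ — and that realization is performed in the indexing universe $\widetilde{\cV}$ so the output lands in $\Sp^{O(2)}_{\widetilde{\cV}}$. Both points are already established: the relations are verified in the construction of $B^{\di}_\bullet(A;M)$ in Section \ref{sec:dihedralbar} (taking $M=A$), and the universe identification is exactly the $O(2)$-action of Lemma \ref{free C2 cyclic object} as refined in the remark following it. I therefore expect the proof to be short: assemble $B^{\di}_\bullet(A)$ as a Real cyclic object via Corollary \ref{iso of models for dihedral objects}, realize it using the $D_2$-diagram machinery, and invoke Lemma \ref{free C2 cyclic object} to upgrade the $D_2\times\bT$ structure to a genuine $O(2)$-action, with naturality inherited levelwise.
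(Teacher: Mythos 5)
Your proposal is correct and uses the same essential argument as the paper: both rely on Theorem \ref{action on realization} (Fiedorowicz--Loday) to equip the levelwise geometric realization of the dihedral object $B^{\di}_\bullet(A)$ with an $O(2)$-action, note that the resulting spectrum is indexed on $\widetilde{\cV}$, and observe that functoriality is automatic. The paper's proof is terser — it does not route through the Real cyclic object formulation (Corollary \ref{iso of models for dihedral objects}) or Lemma \ref{free C2 cyclic object}, which your write-up invokes as extra scaffolding — but the core step and conclusion are identical.
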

\begin{proof}
By~\cite{FL91} 
the realization of a dihedral space has an $O(2)$-action. Since geometric realization in orthogonal spectra is computed level-wise, we know $|B_{\bullet}^{\di}(A)|$ has an $O(2)$-action and this orthogonal $O(2)$-spectrum is indexed on $\widetilde{\cV}$. This construction 
is also clearly functorial. 
\end{proof}

Real topological Hochschild homology can be defined via the dihedral bar construction, as in~\cite{DMPPR21, DMP21}.

\begin{defin}
For an $E_{\sigma}$-ring $A$, the Real topological Hochschild homology of $A$ is
\[
\THR(A) = |B_{\bullet}^{\di}(A)|.
\]
\end{defin}
\noindent By Lemma~\ref{lem:O(2)}, this is an $O(2)$-spectrum. We now show that it can be viewed as an equivariant norm.

 We define the norm from $D_2$ to $O(2)$ using the dihedral bar construction. 
 \begin{defin}\label{O2 norm} 
 Let $A$ be an $E_{\sigma}$-ring in $\Sp_{\cV}^{D_2}$. We define 
 \[ N_{D_2}^{O(2)}A= \cI_{\widetilde{\cV}}^{\cU}| B_{\bullet}^{\di} (A)  |.\]  This is functorial for $A\in \Assoc_\sigma(\Sp_{\cV}^{D_2})$.
 This defines a functor 
 \[ N_{D_2}^{O(2)}\colon \thinspace \Assoc_{\sigma}(\Sp_{\cV}^{D_2}) \to \Sp_{\cU}^{O(2)}.\]
\end{defin}
 \begin{rem}
 In particular, we produce a functor 
 \[ N_{D_2}^{O(2)}:\Comm(\Sp_{\cV}^{D_2})\longrightarrow \Comm(\Sp_{\cU}^{O(2)})\]
 by restriction to the subcategory $\Comm(\Sp_{\cV}^{D_2})\subset \Assoc_{\sigma}(\Sp_{\cV}^{D_2})$.
 \end{rem}

Note that the category of commutative monoids in $\Sp_{\cV}^{D_2}$ is tensored over the category of $D_2$-sets. This follows because $\Sp_{\cV}^{D_2}$ is tensored over $D_2$-sets and the forgetful functor 
$\Comm(\Sp_{\cV}^{D_2})\to \Sp_{\cV}^{D_2}$
 creates all indexed limits and the category $\Comm(\Sp_{\cV}^{D_2})$ contains all $\Top^{D_2}$-enriched equalizers, by a generalization of~\cite[Lem. 2.8]{MSV97}. We simply write $\otimes$ for this tensoring, viewed as a functor
 \[ -\otimes - \colon \thinspace  \Comm(\Sp_{\cV}^{D_2})\times D_2\text{-}\Set   \longrightarrow \Comm(\Sp_{\cV}^{D_2}).\]
 Note that this naturally extends to a functor 
 \[ -\otimes - \colon \thinspace  \Comm(\Sp_{\cV}^{D_2}) \times \left (D_2\text{-}\Set\right )^{\Delta^{\op}}\longrightarrow \Comm(\Sp_{\cV}^{D_2}).\]
\begin{defin}
Consider the minimal simplicial model
\[
	\xymatrix{ D_2 \ar[r] & 
 	\ar@<1ex>[l]\ar@<-1ex>[l]   \ar@<1ex>[r]\ar@<-1ex>[r]   D_4  &  \ar@<2ex>[l]\ar@<-2ex>[l]\ar[l]
                                 \ar@<2ex>[r]\ar@<-2ex>[r]\ar[r]  D_6 &
 	  \dots \ar@<3ex>[l]\ar@<-3ex>[l]\ar@<1ex>[l]\ar@<-1ex>[l]  \ldots  
  }\]
  for $O(2)$.  
Note that $D_{2(\bullet+1)}$ is    a dihedral set~\cite[Lemma~6.3.1]{Loday98}.  Therefore $\sq(D_{2(\bullet+1)}),$ its Segal-Quillen subdivision (as in~\cite{Seg73}), 
   is a simplicial $D_2$-set. We define 
  \[O(2)_{\bullet}=\sq(D_{2(\bullet+1)})\] as a simplicial $D_2$-set. 
  \end{defin} 

\begin{defin}\label{O2 tensor}
Let $A\in \Comm(\Sp_{\cV}^{D_2})$. Form the coequalizer of the diagram 
\[ 
\xymatrix{ 
 A \otimes D_2\otimes	O(2)_{\bullet}  \ar@<1ex>[rr]^(.5){\id_{A} \otimes \psi } \ar@<-1ex>[rr]_(.5){N\otimes\id_{O(2)_{\bullet}}} &&  A\otimes O(2)_{\bullet}
	}
\]
where 
$
	N \co A\otimes D_2 =N^{D_2}_e\iota_e^*A \to A 
$
is the $E_0$-$N^{D_2}_e\iota_e^*A$-algebra structure map and $\psi \colon \thinspace D_2\times O(2)_{\bullet}\longrightarrow O(2)_{\bullet}$ is the left $D_2$-action on $O(2)_{\bullet}$. 
We define the coequalizer in the category of simplicial objects in $\Comm(\Sp_{\cV}^{D_2})$ to be 
$  A  \otimes_{D_2}O(2)_{\bullet} $
and we may consider the geometric realization $|A\otimes_{D_2} O(2)_{\bullet}|$ as an object in $\Sp_{\widetilde{\cV}}^{O(2)}$. We therefore make the following definition
\begin{align*}
\label{relative tensor norm}
A\otimes_{D_2} O(2) =\cI_{\widetilde{\cV}}^{\cU}|A\otimes_{D_2} O(2)_{\bullet}|.
\end{align*}
\end{defin}

We now identify the norm (Definition~\ref{O2 norm}) with the tensor (Definition~\ref{O2 tensor}) in the commutative case. Recall that an orthogonal $G$-spectrum $X$ indexed on a complete universe $\cU$ is \emph{well-pointed} if $X(V)$ is well-pointed in $\Top^G$ for all finite dimensional orthogonal $G$-representations $V$.  We say an $E_{\sigma}$-ring in $\Sp^{D_2}_{\cV}$ is \emph{very well-pointed} if it is well-pointed and the unit map $S^0\to X(\mathbb{R}^0)$ is a Hurewicz cofibration in $\Top^{D_2}$.

\begin{prop}\label{equivalence of norm and tensor}
Let $A$ be an object in $\Comm(\Sp_{\cV}^{D_2})$ and assume that $A$ is very well-pointed. 
There is a natural map 
\[  N_{D_2}^{O(2)}A \longrightarrow   A  \otimes_{D_2} O(2),\]
in $\Comm(\Sp_{\cU}^{O(2)})$, which is a weak equivalence after forgetting to $\Sp_{\cV}^{D_2}$. 
\end{prop}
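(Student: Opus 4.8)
The plan is to realize both $N_{D_2}^{O(2)}A$ and $A\otimes_{D_2}O(2)$ as $\cI_{\widetilde{\cV}}^{\cU}$ applied to the geometric realization of a tensor-type simplicial object in $\Comm(\Sp_\cV^{D_2})$, to produce the comparison map at that level, and to prove it becomes an equivalence after restriction along $\iota_{D_2}^*$ by a levelwise argument. The first step is to use commutativity of $A$ to rewrite the left-hand side: for $A\in\Comm(\Sp_\cV^{D_2})$ the $k$-simplices $B^{\di}_k(A)=A^{\otimes(k+1)}$ are canonically the tensor $A\otimes\mathbb{Z}/(k+1)$ of the commutative monoid $A$ over the finite $D_2$-set $\mathbb{Z}/(k+1)$ with the standard dihedral action; the levelwise involution $\omega_k$ is exactly the $D_2$-action that $A\otimes(-)$ produces from the reflection on $\mathbb{Z}/(k+1)$ together with the internal involution $\tau$ on each tensor factor, and the simplicial, cyclic, and reflection structure maps are $A\otimes(-)$ applied to those of the minimal dihedral model $S^1_\bullet$ of the circle, namely the dihedral set $[k]\mapsto\mathbb{Z}/(k+1)$. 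Since $A\otimes(-)$ is a left adjoint it commutes with geometric realization, so $|B^{\di}_\bullet(A)|\cong A\otimes|S^1_\bullet|=A\otimes S^1$, carrying the $O(2)$-action coming from the $O(2)$-action on $S^1$ (Theorem \ref{action on realization}); hence $N_{D_2}^{O(2)}A=\cI_{\widetilde{\cV}}^{\cU}(A\otimes S^1)$.

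For the right-hand side, recall from Section \ref{subdivision} that $O(2)_\bullet=\sq(D_{2(\bullet+1)})$ is a simplicial $D_2$-set whose realization is $D_2$-equivariantly $O(2)$, with $D_2$ (via the chosen splitting) acting by left translation and therefore freely. For a free $D_2$-set $S$, the coequalizer of Definition \ref{O2 tensor} defining $A\otimes_{D_2}S$ --- using the $E_0$-$N^{D_2}A$-structure map $N\colon A\otimes D_2=N^{D_2}A\to A$ on one side and the $D_2$-action $\psi$ on $S$ on the other --- computes, modulo the cofibrancy point discussed below, the tensor of $A$ over the orbit set $S/D_2$, with $D_2$ now acting only through the internal involution on $A$; applying this degreewise and using once more that $A\otimes(-)$ commutes with realization gives $|A\otimes_{D_2}O(2)_\bullet|\cong A\otimes_{D_2}O(2)$, with the $O(2)$-action coming from the residual right translation action on $O(2)$. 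The natural comparison map is then obtained, after subdividing $B^{\di}_\bullet(A)$, from the coset projection of dihedral sets $\Aut_\Xi\twoheadrightarrow\Aut_\Xi/D_2\cong S^1_\bullet$ (projection by the vertex-stabilizing reflection subgroup, which is precisely the $D_2$ one relativizes over) together with the relative-tensor quotient; equivalently, it is the adjoint under the adjunction of Theorem \ref{main thm 1} of the natural map $A\cong A\otimes_{D_2}(D_2/e)\to\iota_{D_2}^*(A\otimes_{D_2}O(2))$ induced by the inclusion $D_2\hookrightarrow O(2)$. One then checks that this map is compatible with all of the structure and is a levelwise equivalence, which reduces to the combinatorial identification $\mathbb{Z}/(2k+2)\cong\Aut_\Xi([2k+1])/D_2$ together with the computation of $A\otimes_{D_2}(-)$ on free $D_2$-sets.

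The main obstacle --- and the reason the statement asserts only a weak equivalence on underlying $D_2$-spectra rather than an isomorphism --- is that $A\otimes_{D_2}O(2)_\bullet$ is built from a strict coequalizer in $\Comm(\Sp_\cV^{D_2})$, and coequalizers of commutative ring spectra compute the homotopically correct answer only when the maps involved are suitably cofibrant. This is exactly where the hypothesis that $A$ is very well pointed, hence flat in the sense of Definition \ref{flat}, enters: it forces the relevant latching maps, and in particular the norm structure map $N^{D_2}A\to A$, to be cofibrations, so that the strict coequalizer agrees with the homotopy coequalizer and the levelwise comparison above is an equivalence of $D_2$-spectra. Very-well-pointedness also makes both simplicial objects Reedy cofibrant, so that geometric realization carries the levelwise equivalence to an equivalence; since $\cI_{\widetilde{\cV}}^{\cU}$ preserves underlying weak equivalences, one concludes that $N_{D_2}^{O(2)}A\to A\otimes_{D_2}O(2)$ is a weak equivalence after restriction to $\Sp_\cV^{D_2}$. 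Naturality in $A$ is visible throughout.
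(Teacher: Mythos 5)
Your overall strategy — subdivide, recognize both sides levelwise as tensoring a commutative $D_2$-ring over a finite $D_2$-set, identify the two levelwise and use good/Reedy-cofibrant simplicial objects to pass to realizations — is the same approach the paper takes, and the levelwise bookkeeping (the identification of the $k$-simplices with $A^{\wedge(2k+2)}$ via $D_{4(k+1)}/D_2\cong\mu_{2k+2}$) is in order. Two things in your write-up, however, are not correct and should be fixed. First, it is circular to describe the comparison map as "the adjoint under the adjunction of Theorem \ref{main thm 1}": that theorem is \emph{proved} via Corollary \ref{geom fix points of norm}, which rests on the present proposition, so you cannot invoke it here even parenthetically. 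The map must be constructed directly, as you do in your primary description and as the paper does.

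Second, your third paragraph misstates the role of the hypothesis. Very well pointed does \emph{not} imply flat in the sense of Definition \ref{flat} — these are independent conditions, and the proposition does not assume flatness. Moreover, the norm structure map $N^{D_2}A\to A$ is not a latching map, and flatness as defined concerns the latching maps $L_nA\to A(\mathbb{R}^n)$ of $A$ as an orthogonal spectrum, not the $E_0$-algebra structure. Whatever one thinks about strict versus homotopy coequalizers here (the coequalizer in Definition \ref{O2 tensor} is reflexive, so it is preserved by the forgetful functor and computes the expected tensoring for the free $D_2$-set $D_{4(k+1)}$), that is not what very-well-pointedness is for in this proof. The hypothesis is used only to ensure both simplicial objects are \emph{good} in the sense of \cite[Def.\ 1.5]{DMPPR21}, so that the levelwise $D_2$-stable equivalence passes to an equivalence of geometric realizations by \cite[Lem.\ 1.6]{DMPPR21}. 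Replacing your third paragraph with this reasoning (and deleting the parenthetical appeal to Theorem \ref{main thm 1}) would bring the proposal in line with the paper's argument.
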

\begin{proof}
We first prove that there is an equivalence 
\[\sq (B_{\bullet}^{\di}(A)) \simeq A\otimes_{D_2} O(2)_{\bullet}. \]
 We consider the $k$ simplices on each side. On  the left, the $k$-simplices, are $A \wedge A^{\wedge \text{\bf 2k+1}}$
with $D_{4(k+1)}=\langle \omega ,t | t^{2(k+1)}=\omega^2=t\omega t\omega =1\rangle$ action given by letting $t$ cyclically permute the $2k+2$ copies of $A$, and letting $\tau$ act on $ \bfk=\{1,\dots ,2k+1\}$ by $\tau(i)=2k+1-i+1$. 
The $k$-simplices on the right hand side  are given by the coequalizer of the diagram 
\[
	\xymatrix{ 
       A  \otimes D_2 \otimes  D_{4(k+1)}   \ar@<1ex>[rr]^(.5){ \id_{A}  \otimes  \psi } \ar@<-1ex>[rr]_(.5){  N\otimes \id_{D_{4(k+1)}} } && A\otimes D_{4(k+1)}
		}
\]
in the category of $D_{4(k+1)}$-spectra. This is $ A \otimes_{D_2}D_{4(k+1)}$ and therefore the result on $k$-simplices follows from the $D_{4(k+1)}$-equivariant map 
\[  A \otimes_{D_2}  D_{4(k+1)}\simeq  A \wedge A^{\wedge \text{\bf 2k+1 \rm}}, \] 
which is clearly an equivalence on underlying commutative $D_2$-spectra.
To see that this map is $D_{4(k+1)}$-equivariant, note that $D_{4(k+1)}/D_2\cong \mu_{2k+2}$ as $D_{4(k+1)}$-sets and the right-hand-side can also be considered as a tensoring with the $D_{4(k+1)}$-set $\mu_{2k+2}$. Since this map is $D_{4(k+1)}$-equivariant it is compatible with the automorphisms in the dihedral category. It is also easy to check that this is compatible with the face and degeneracy maps. Since $A$ is very well-pointed, both sides are good in the sense of~\cite[Definition~1.5]{DMPPR21}, this level equivalence induces an equivalence on geometric realizations in the category of $D_2$-spectra by~\cite[Lemma~1.6]{DMPPR21}. 
\end{proof} 

 \begin{rem}
 Note that in the $\cF$-model structure on $\Sp_{\cW}^{G}$ where $G$ is a compact Lie group, $\cF$ is a family of  subgroups, and $\cW$ is a universe, the $\cF$-equivalences can either be taken to be the maps $X\to Y$ that induce isomorphisms on homotopy groups $\pi_*(X^H)\longrightarrow \pi_*(Y^H)$ for all $H\in \cF$ or the maps that induce isomorphisms  $\pi_*(\Phi^H X)\longrightarrow \pi_*(\Phi^HY)$, for all $H \in \cF$, where $\Phi^H$ denotes the $H$-geometric fixed points. 
 \end{rem}

 Recall that $\cR$ denotes the family of subgroups of $O(2)$ which intersect $\bT$ trivially.

\begin{cor}\label{geom fix points of norm}
Let $A$ be an object in $\Comm(\Sp_{\cV}^{D_2})$ and assume that $A$ is very well-pointed and flat in the sense of~\cite[Definition 2.6]{DMPPR21}).
Then there is an $\cR$-equivalence 
\[N_{D_2}^{O(2)}A\simeq  A \otimes_{D_2} O(2).\]
\end{cor}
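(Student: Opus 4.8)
The idea is to combine Proposition \ref{equivalence of norm and tensor} with a single geometric fixed point computation, exactly along the lines indicated in the discussion preceding the statement. Proposition \ref{equivalence of norm and tensor} already produces a natural map $N_{D_2}^{O(2)}A\to A\otimes_{D_2}O(2)$ in $\Comm(\Sp_{\cU}^{O(2)})$ that becomes a weak equivalence after restriction to $\Sp_{\cV}^{D_2}$; in particular it is an equivalence on $\Phi^{e}$ (that is, on underlying homotopy). Since every nontrivial subgroup of $\cR$ is $O(2)$-conjugate to the reflection subgroup $D_2\subset O(2)$, and $\cR$-equivalences may be detected on geometric fixed points, the whole statement reduces to checking that this comparison map is a $\Phi^{D_2}$-equivalence.

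To verify this I would pass to a levelwise statement. The change of universe functor $\cI_{\widetilde{\cV}}^{\cU}$ is a $\Phi^{D_2}$-equivalence onto its image and can be discarded. Using the Segal--Quillen subdivision we have $N_{D_2}^{O(2)}A=|\sq B^{\di}_{\bullet}(A)|$ and $A\otimes_{D_2}O(2)=|A\otimes_{D_2}O(2)_{\bullet}|$, and both $\sq B^{\di}_{\bullet}(A)$ and $A\otimes_{D_2}O(2)_{\bullet}$ are simplicial objects of $\Sp_{\cV}^{D_2}$ that are good in the sense of \cite[Definition 1.5]{DMPPR21} (this is where the very well pointed and flat hypotheses enter). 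Since $\Phi^{D_2}$ is a symmetric monoidal left adjoint, it commutes with these realizations, so it suffices to show that in each simplicial degree $k$ the comparison map is a $\Phi^{D_2}$-equivalence. By the proof of Proposition \ref{equivalence of norm and tensor} this degree-$k$ map is the $D_{4(k+1)}$-equivariant map $A\otimes_{D_2}D_{4(k+1)}\to A\wedge A^{\wedge 2k+1}$ appearing there.

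Restricted to the reflection $D_2\subset D_{4(k+1)}$, each side is the indexed smash product of $2k+2$ copies of $A$ over the $D_2$-set with underlying set $\{0,1,\dots,2k+1\}$ on which the generator acts by $i\mapsto -i$ modulo $2k+2$. This $D_2$-set has exactly the two fixed points $0$ and $k+1$ together with $k$ free orbits, so by \cite[Proposition 6.2]{BlHi15} each side is, up to a permutation of smash factors, $(N_e^{D_2}\iota_e^*A)^{\wedge k}\wedge A^{\wedge 2}$. Applying $\Phi^{D_2}$, using that $\Phi^{D_2}$ is symmetric monoidal and $\Phi^{D_2}N_e^{D_2}\iota_e^*A\simeq \iota_e^*A$, both sides become $(\iota_e^*A)^{\wedge k}\wedge(\Phi^{D_2}A)^{\wedge 2}$ and the comparison map becomes a permutation of the identity, hence an equivalence. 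Realizing, the comparison map is a $\Phi^{D_2}$-equivalence, and therefore an $\cR$-equivalence.

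I expect the main obstacle to be the bookkeeping in the last paragraph: one must check that the reflection-$D_2$-action on the $k$-simplices of $A\otimes_{D_2}O(2)_{\bullet}$ agrees, \emph{as an indexed smash product} over this $D_2$-set, with the one on $\sq B^{\di}_{k}(A)$ coming from the dihedral structure --- in particular that the levelwise involutions $j$ and $\tau$ distribute over the smash factors in the way dictated by the fixed-point/free-orbit decomposition --- so that the $D_{4(k+1)}$-equivariant identification from the proof of Proposition \ref{equivalence of norm and tensor} is genuinely compatible with the geometric-fixed-points-of-a-norm decomposition on both sides. The remaining point, that $\Phi^{D_2}$ commutes with the two geometric realizations, is exactly what the very well pointed and flat hypotheses are there to guarantee.
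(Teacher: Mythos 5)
Your proof is correct in outline and takes a genuinely different, and in one respect cleaner, route from the paper's.

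The paper proves the corollary by computing $\Phi^{D_2}$ of each side separately and comparing both to a third object: it applies $\Phi^{D_2}$ to the zig-zag of Proposition \ref{norm=bokstedt} and then invokes the formula $\iota_{D_2}^*\THR(R)\simeq R\wedge^{\mathbb{L}}_{N_e^{D_2}\iota_e^*R}R$ from \cite{DMPPR21} to identify $\Phi^{D_2}(N_{D_2}^{O(2)}A)$ with $\Phi^{D_2}A\wedge^{\mathbb{L}}_{\iota_e^*A}\Phi^{D_2}A$, and separately carries out the level-wise identification of $\Phi^{D_2}(A\otimes_{D_2}O(2)_\bullet)$ with $B_\bullet(\Phi^{D_2}A,\iota_e^*A,\Phi^{D_2}A)$ using the finite multiplicative double coset formula. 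You instead apply $\Phi^{D_2}$ directly to the natural comparison map of Proposition \ref{equivalence of norm and tensor}, decompose both simplicial levels as indexed smash products over the $D_2$-set $\{0,\dots,2k+1\}$ with two fixed points and $k$ free orbits, and observe that the map is a permutation after taking geometric fixed points. The level-wise computation you perform is essentially the same as the paper's identification of $\Phi^{D_2}(A\otimes_{D_2}O(2)_\bullet)$, so the two arguments are roughly equal in total work; what you gain is that you avoid any appeal to the B\"okstedt model or to \cite{DMPPR21}, and --- more importantly --- you show directly that the \emph{specific natural map} of Proposition \ref{equivalence of norm and tensor} is a $\Phi^{D_2}$-equivalence, rather than merely that the two sides have abstractly equivalent geometric fixed points, which is what is needed to deduce naturality of the $\cR$-equivalence in the proof of Theorem \ref{main norm thm}.

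The one place that needs to be nailed down is the ``obstacle'' you flag yourself, and you identify it precisely: one must check that the dihedral $\omega$-action on $\sq B^{\di}_k(A)$, which reverses the non-initial factors and applies $j$ and $\tau$ to each, really does present $A^{\wedge 2k+2}$ as the indexed smash product $A^{\wedge 2}\wedge(N_e^{D_2}\iota_e^*A)^{\wedge k}$. The subtlety is that on a free orbit the literal $\omega$-action swaps the two factors \emph{and} twists by $\tau$, whereas the Weyl action on $N_e^{D_2}\iota_e^*A$ is the bare swap; these $D_2$-actions are isomorphic via the automorphism applying $\tau$ to one factor of each free pair, but one has to insert this change of coordinates coherently across simplicial degrees and match it against the corresponding presentation of $A\otimes_{D_2}D_{4(k+1)}$. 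This is the same ``tedious but routine'' face-and-degeneracy check the paper also leaves implicit, so it is not a gap peculiar to your approach, but it is the real content of the last step and should be spelled out if you write this up. One small slip: you write the degree-$k$ comparison map as $A\otimes_{D_2}D_{4(k+1)}\to A\wedge A^{\wedge 2k+1}$; the statement of Proposition \ref{equivalence of norm and tensor} orients the natural map as $N_{D_2}^{O(2)}A\to A\otimes_{D_2}O(2)$, so the level-wise map goes the other way, from $\sq B^{\di}_k(A)$ to $A\otimes_{D_2}D_{4(k+1)}$; this does not affect the argument.
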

\begin{proof}
Since there is a zig-zag of stable equivalences 
\[ \Phi^{D_2}(N_{D_2}^{O(2)}A)\simeq \Phi^{D_2}(\THR(A))\]
by~\cite[Remark 3.6]{DMP22} 
there is a zig-zag of stable equivalences 
\[ \Phi^{D_2}(N_{D_2}^{O(2)}A) \simeq \Phi^{D_2}(A) \wedge_{\iota_e^*A}^{\bL}\Phi^{D_2}(A)\]
where the right-hand-side is the derived smash product. 
Since $\Phi^{D_2}(-)$ sends homotopy colimits of  $O(2)$-orthogonal spectra indexed on $\cU$ to homotopy colimits of orthogonal spectra,\footnotemark \footnote{Using the Bousfield--Kan formula for homotopy colimits, we can write any homotopy colimit as the geometric realization of a simplicial spectrum and then use the fact that genuine geometric fixed points commute with sifted colimits.} we can identify 
$\Phi^{D_2}(A \otimes_{D_2} O(2)_{\bullet})$ with the homotopy coequalizer of
\[ 
	\xymatrix{ 
		\Phi^{D_2}(A \otimes D_2 \otimes O(2)_{\bullet} )\ar@<.5ex>[r] \ar@<-.5ex>[r]& \Phi^{D_2}(A\otimes  O(2)_{\bullet})
		}
\]
which is level-wise equivalent to 
\[ \
\Phi^{D_2}N^{D_{4(n+1)}}_{D_2}(A) \simeq \Phi^{D_2} \left( \bigwedge_{\gamma } N_{D_2\cap \gamma D_2\gamma^{-1}} ^{D_2}(\iota_{D_2\cap \gamma D_2\gamma^{-1}}^*c_{\gamma} A) \right )
\]
where $\gamma$ ranges over the representatives of double cosets $D_2\backslash \gamma \slash D_2$ in the set of double cosets $D_2\backslash D_{4(n+1)} / D_2$, which in turn is equivalent to 
\[ B_n(\Phi^{D_2}(A),\iota_e^*(A),\Phi^{D_2}(A)).\]
It is tedious, but routine, to check that this equivalence is compatible with the face and degeneracy maps. Since both source and target are Reedy cofibrant by our assumptions (cf.~\cite[Definition 1.5, Lemma 1.6]{DMPPR21}), 
this produces a stable equivalence 
\[ \Phi^{D_2}(A \otimes_{D_2}O(2))\simeq \Phi^{D_2}(A)\wedge_{\iota_e^*A}^{\bL}\Phi^{D_2}(A)\]
of orthogonal spectra on geometric realizations. 
Since all the groups of order $2$ in $\cR$ are conjugate in $O(2)$, we have proven the claim. 
\end{proof}  

\begin{lem}\label{lem: equivalence}
Given a stable equivalence of very well-pointed $E_{\sigma}$-rings $A\to A^{\prime}$ in $D_2$-orthogonal spectra indexed on $\cV$, there is an $\cR$-equivalence 
\[ N_{D_2}^{O(2)}A\to  N_{D_2}^{O(2)}A^{\prime}\]
of $O(2)$-spectra. 
\end{lem}
\begin{proof}
The induced map $ N_{D_2}^{O(2)}A \to N_{D_2}^{O(2)}A^{\prime}$
is $O(2)$-equivariant by construction, so it suffices to check that after restricting to $D_2$-spectra it is a stable equivalence of $D_2$-spectra. Note that this is independent of the choice of subgroup in $\cR$ of order $2$ because all of the subgroups of order two in $\cR$ are conjugate. 
After restricting to $D_2$-spectra, there is a zigzag of stable equivalences of $D_2$-spectra 
\[ \iota_{D_2}^*N_{D_2}^{O(2)}A \simeq  \iota_{D_2}^*\THR(A) \overset{\simeq}{\longrightarrow} \iota_{D_2}^*\THR(A^{\prime}) \simeq   \iota_{D_2}^*N_{D_2}^{O(2)}A^{\prime},\]
by~\cite[Theorem~2.20]{DMPPR21} and this agrees with the restriction of the map induced by $A\to A^{\prime}$ by naturality of~\cite[Remark 3.6]{DMP22}.
\end{proof}

\noindent We now show that  
the norm from $D_2$ to $O(2)$ (cf. Definition~\ref{O2 norm}) satisfies the universal property that one would expect of a norm. For a group $G$, let $All$ denote the family of all subgroups of $G$.
\begin{thm}\label{main norm thm}
The restriction  
\[ N_{D_2}^{O(2)} \co \Comm(\Sp_{\cV}^{D_2})\to \Comm(\Sp ^{O(2),\cR}_{\cU})\]
of the norm functor $N_{D_2}^{O(2)}$ to genuine commutative $D_2$-ring spectra is left Quillen adjoint to the restriction functor $\iota_{D_2}^*$ where 
\[\Comm(\Sp_{\cV}^{D_2}) \text{ and }\Comm(\Sp ^{O(2),\cR}_{\cU})\] 
are equipped with the $All$-model structure and the $\cR$-model structure  
respectively. 
\end{thm}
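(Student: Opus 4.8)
The strategy is to pass to the relative-tensor description of the norm and then argue as in the circle case. Refining Proposition~\ref{equivalence of norm and tensor}, I would first observe that on commutative inputs the natural map $N_{D_2}^{O(2)}A\to A\otimes_{D_2}O(2)$ is an isomorphism: since $D_{4(k+1)}$ is a free $D_2$-set, the levelwise comparison maps $A\otimes_{D_2}D_{4(k+1)}\to A^{\wedge 2(k+1)}$ used in its proof are isomorphisms, the defining coequalizer merely absorbing the free $D_2$-action into the norm structure map $N\colon N^{D_2}A\to A$. Thus $N_{D_2}^{O(2)}\cong(-)\otimes_{D_2}O(2)$ on $\Comm(\Sp_{\cV}^{D_2})$, and it suffices to prove that $A\mapsto A\otimes_{D_2}O(2)$ is left adjoint to $\iota_{D_2}^*$ and that this adjunction is Quillen for the stated model structures. (Alternatively the argument below can be run directly with the dihedral bar construction.)

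For the adjunction I would combine the technique of \cite{ABGHLM18} for $N_e^{\mathbb{T}}$ with the usual formalism of multiplicative induction along a subgroup inclusion. The relevant structural inputs are that $\Comm(\Sp_{\cV}^{D_2})$ is tensored over simplicial $D_2$-sets and $\Comm(\Sp_{\widetilde{\cV}}^{O(2)})$ over simplicial $O(2)$-sets; that $\iota_{D_2}^*$ commutes with these tensorings and with geometric realization; and that $\iota_{D_2}^*\circ\cI_{\cU}^{\widetilde{\cV}}\cong\iota_{D_2}^*$, because $\iota_{D_2}^*\widetilde{\cV}=\iota_{D_2}^*\cU=\cV$. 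The simplicial $D_2$-set $O(2)_\bullet=\sq(D_{2(\bullet+1)})$ carries a commuting right $O(2)$-action whose realization is the topological group $O(2)$ equipped with its left $D_2$- and right $O(2)$-translation actions (Theorem~\ref{action on realization}), and it plays the role of ``$\mathbb{T}$'' in the cyclic story. Given $B\in\Comm(\Sp_{\cU}^{O(2)})$, one unwinds a map $A\otimes_{D_2}O(2)\to B$ through, successively: the change-of-universe adjunction $\cI_{\widetilde{\cV}}^{\cU}\dashv\cI_{\cU}^{\widetilde{\cV}}$; the geometric realization adjunction; and the coequalizer of Definition~\ref{O2 tensor}, which converts the data into a simplicial map $A\otimes O(2)_\bullet\to\underline{B}$ equalizing $\id_A\otimes\psi$ and $N\otimes\id$. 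The universal property of $O(2)_\bullet$ as a simplicial $(D_2,O(2))$-biset with free right $O(2)$-action then allows one to reconstruct such a datum from its value along the identity coset, while the coequalization against $N$ is precisely what records the $D_2$-structure; the outcome is a natural bijection with $\Comm(\Sp_{\cV}^{D_2})(A,\iota_{D_2}^*B)$. Naturality in both variables is routine.

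For the Quillen property I would check the right adjoint side: that $\iota_{D_2}^*\colon\Comm(\Sp_{\cU}^{O(2),\cR})\to\Comm(\Sp_{\cV}^{D_2})$ preserves fibrations and acyclic fibrations. By Proposition~\ref{model structure} these are created on underlying spectra, which reduces the claim to $\iota_{D_2}^*\colon\Sp_{\cU}^{O(2),\cR}\to\Sp_{\cV}^{D_2}$, with the $\cR$-model structure on the source and the complete model structure on the target. Since $D_2\in\cR$, every subgroup of $D_2$ lies in $\cR$, and every nontrivial element of $\cR$ is $O(2)$-conjugate to $D_2$, the general theory of family model structures (\cite{GM95}, cf.\ \cite{Hog16}) gives that restriction along $D_2\hookrightarrow O(2)$ takes $\cR$-equivalences to $D_2$-stable equivalences and $\cR$-fibrations to fibrations of $D_2$-spectra; hence $\iota_{D_2}^*$ is right Quillen, and $N_{D_2}^{O(2)}\dashv\iota_{D_2}^*$ is a Quillen adjunction.

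The main obstacle I expect is not the formal adjunction but the role of the family $\cR$ in the Quillen statement. The norm $N_{D_2}^{O(2)}$ is \emph{not} left Quillen for the complete model structure on $\Comm(\Sp_{\cU}^{O(2)})$, so the real content is that localizing the target at $\cR$-equivalences repairs this --- equivalently, that $\iota_{D_2}^*$ really is right Quillen out of $\Sp_{\cU}^{O(2),\cR}$. Verifying this forces one to keep careful track of positivity, the change of universe $\cI_{\widetilde{\cV}}^{\cU}$, and the combinatorics of the family $\cR$ (all of whose nontrivial members are conjugate to $D_2$), and it is the point at which the flatness and well-pointedness assumptions used via \cite{DMPPR21} (cf.\ Corollary~\ref{geom fix points of norm}) matter, guaranteeing that the Quillen adjunction computes the expected derived functors.
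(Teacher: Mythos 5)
Your proof takes essentially the same route as the paper's: both pass to the relative tensor $(-)\otimes_{D_2}O(2)$ from Definition~\ref{O2 tensor}, use the comparison of Proposition~\ref{equivalence of norm and tensor}/Corollary~\ref{geom fix points of norm} to identify it with the norm, assert the point-set adjunction $(-)\otimes_{D_2}O(2)\dashv\iota_{D_2}^*$ formally, and verify the Quillen property by checking the right adjoint. The differences are minor and mostly about precision.

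Your first variation---upgrading the natural map $N_{D_2}^{O(2)}A\to A\otimes_{D_2}O(2)$ to an \emph{isomorphism} on commutative inputs---is a sharpening of Proposition~\ref{equivalence of norm and tensor}, which the paper only states as a weak equivalence and proves under a very-well-pointedness hypothesis via Reedy cofibrancy. Your heuristic that the levelwise comparison $A\otimes_{D_2}D_{4(k+1)}\to A\wedge A^{\wedge\mathbf{2k+1}}$ is an isomorphism because $D_{4(k+1)}$ is a free $D_2$-set and the reflexive coequalizer in commutative $D_2$-rings is computed in the underlying category is believable, but you should verify the identification together with compatibility with the Real cyclic structure maps before relying on it; you hedge appropriately by noting the argument can be run through the homotopy categories instead, which is precisely what the paper does using Corollaries~\ref{geom fix points of norm} and~\ref{equivalence} to show both functors descend and agree there. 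Your Quillen check is cleaner and more convincing than the paper's: the paper asserts that $\iota_{D_2}^*$ ``sends cofibrations and weak equivalences to cofibrations and weak equivalences\ldots consequently [preserves] all fibrations and acyclic fibrations,'' which, read literally, is not a valid implication. What you say instead---that since $D_2\in\cR$ and every nontrivial member of $\cR$ is conjugate to $D_2$, restriction along $D_2\hookrightarrow O(2)$ out of the $\cR$-model structure preserves fibrations and trivial fibrations---is the actual content and correctly identifies the role of the family $\cR$.
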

\begin{proof}
By Corollary~\ref{geom fix points of norm}, there is a natural $\cR$-equivalence
\[ N_{D_2}^{O(2)}(A)\simeq A\otimes_{D_2}O(2),  \]
of $O(2)$-orthogonal spectra. If $A\to A^{\prime}$ is an $\All$-equivalence of $D_2$-spectra where both source and target are very well-pointed then there is an $\cR$-equivalence
\[ N_{D_2}^{O(2)}(A)\simeq N_{D_2}^{O(2)}(A^{\prime})\]
of $O(2)$-spectra by Lemma~\ref{lem: equivalence}. In particular, if $A$ and $A^{\prime}$ are cofibrant in the positive complete stable model structure on $\Assoc_{\sigma}(\Sp_{\cV}^{D_2})$, then they are in particular very well-pointed.  This shows that both the functors $N_{D_2}^{O(2)}(-)$ and $(-)\otimes_{D_2}O(2)$ induce well-defined functors between the homotopy categories 
\[ N_{D_2}^{O(2)} \colon \thinspace \ho\left (\Comm(\Sp_{\cV}^{D_2})\right)\to \ho\left (\Comm(\Sp_{\cU}^{O(2),\cR}) \right )\]
and 
\[ - \otimes_{D_2} O(2) \colon \thinspace \ho \left (\Comm(\Sp_{\cV}^{D_2})\right)\to \ho\left (\Comm(\Sp_{\cU}^{O(2),\cR}) \right )\]
and they are naturally isomorphic on the homotopy categories. It is clear that $-\otimes_{D_2}O(2)$ is left adjoint to the restriction functor
\[ \iota_{D_2}^* \colon \thinspace \ho \left (\Comm(\Sp_{\cU}^{O(2),\cR}) \right )\to \ho\left (\Comm(\Sp_{\cV}^{D_2})\right).\]
Moreover, the restriction functor sends cofibrations and weak equivalences to cofibrations and weak equivalences by definition of the stable $\cR$-equivalences and the positive stable $\cR$-cofibrations. Consequently it also preserves all fibrations and acyclic fibrations. 
\end{proof}

\section{A multiplicative double coset formula}\label{mdcf}

The multiplicative double coset formula for finite groups gives an explicit formula for the restriction to $K$ of the norm from $H$ to $G$ where $H$ and $K$ are subgroups of $G$. For compact Lie groups, no such multiplicative double coset formula is known in general. In this section, we present a multiplicative double coset formula for the restriction to $D_{2m}$ of the norm from $D_2$ to $O(2)$. 

\begin{convention}\label{conv: ordered coset}
When the integer $m$ is understood from context, let
\[ \zeta=\zeta_{2m}=e^{2i\pi/2m}\in \bT\subset O(2).\]
We consider the element $\zeta$ as a lift of the element $-1$ along 
\[ D_{2m}\backslash O(2)/D_2\cong \mu_m\backslash \bT\cong \bT.\]
We make this choice of homeomorphism simply so that the formula for $\zeta$ can be chosen consistently for all $m$ independent of whether $m$ is odd or even. We observe that $\zeta D_{2}\zeta^{-1}=\langle \zeta_{m}\tau\rangle$.  

Fix total orders on the $D_{2m}$-sets $D_{2m}/e$, $D_{2m}/D_2$, and $D_{2m}/\zeta D_2 \zeta^{-1}$. Let
\[D_{2m}/e=\{1\le  \zeta_m\tau\le \zeta_m \le \zeta_m^{2}\tau \le  \zeta_m^2\le \dots \le \zeta_m^{m-1}\le \tau\},\]
\[ D_{2m}/D_2 = \{D_2\le \zeta_m D_2\le \dots \le  \zeta_m^{m-1}D_2\},\]
\[ D_{2m}/\zeta D_2\zeta^{-1} = \{\zeta D_2\zeta^{-1}\le \zeta_m \cdot \zeta  D_2\zeta^{-1}\le \dots \le \zeta_m^{m-1}\cdot\zeta D_2\zeta^{-1}\}.\]
These choices of total orderings on the $D_{2m}$-sets $D_{2m}/e$, $D_{2m}/D_2$, and  $D_{2m}/\zeta D_2\zeta^{-1}$ also fix group homomorphisms 
\[\lambda_{e} \colon \thinspace D_{2m}\to \Sigma_{2m},\] 
\[\lambda_{D_2} \colon \thinspace D_{2m} \to \Sigma_m \wr  D_2 \] 
\[\lambda_{\zeta D_2\zeta^{-1}}\colon \thinspace D_{2m}  \to \Sigma_m\wr \zeta D_2\zeta^{-1}.\] 
Denote the associated norms by $N_e^{D_{2m}}$, $N_{D_2}^{D_{2m}}$ and $N_{\zeta D_2\zeta^{-1}}^{D_{2m}}$ respectively.\footnote{The choice of ordering does not matter for our norm functors up to canonical natural isomorphism, but remembering the choice of ordering clarifies our constructions later.}
\end{convention}
\begin{remark}\label{order on mum}
Any finite subset $F$ of $\bT\subset \mathbb{C}$ can be equipped with a total order by considering $1\le e^{2i\pi\theta}\le e^{2i\pi\theta^{\prime}}$ for all $0\le\theta \le \theta^{\prime}<1$. In particular, the subset of $m$-th roots of unity $\mu_m$ in $\bT$ can be equipped with a total order. 
\end{remark}

\begin{lem}\label{lem: isomorphism of D2m sets}
There is an isomorphism of totally ordered $D_{2m}$-sets
\[f_{m,k} \colon \thinspace \mu_{2m(k+1)}\cong D_{2m}/D_2\amalg D_{2m}^{\amalg k} \amalg D_{2m}/\zeta D_2\zeta^{-1}\]
where $\zeta=\zeta_{2m}$, the $D_{2m}$-sets on the right have the total orders from Convention~\ref{conv: ordered coset}, and $\mu_{2m(k+1)}$ is equipped with a total order by Remark~\ref{order on mum}.
\end{lem}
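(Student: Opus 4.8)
The plan is to translate the whole statement into the cyclic group $\mathbb{Z}/N$ with $N=2m(k+1)$, where both the group action and the orders become transparent. Identify $\mu_N$ with $\mathbb{Z}/N$ via $j\leftrightarrow e^{2\pi ij/N}$; then the order of Remark~\ref{order on mum} becomes the standard linear order $0<1<\dots<N-1$, and the $D_{2m}$-action (the restriction to $D_{2m}\subset O(2)$ of the action of $O(2)$ on $\mathbb{T}$) becomes: the rotation generator $\zeta_m$ acts by $j\mapsto j+2(k+1)$ and the reflection $\tau$ acts by $j\mapsto -j$, so $\zeta_m^\ell\tau$ acts by $j\mapsto 2\ell(k+1)-j$. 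Under this identification the element $\zeta=\zeta_{2m}$ corresponds to $k+1\in\mathbb{Z}/N$.

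First I would analyze the orbits. The rotation subgroup $\mu_m$ acts freely, with orbit set $\mathbb{Z}/2(k+1)$, on which the residual $\tau$ acts by negation; the $\tau$-orbits there are $\{0\}$, $\{k+1\}$, and $\{a,\,2(k+1)-a\}$ for $1\le a\le k$. Lifting, the $D_{2m}$-orbits of $\mathbb{Z}/N$ are the orbit of $0$, the orbit of $k+1$, and the orbit of $a$ for each $1\le a\le k$. A direct stabilizer computation gives $\Stab_{D_{2m}}(0)=\langle\tau\rangle=D_2$; $\Stab_{D_{2m}}(k+1)=\langle\zeta_m\tau\rangle$, which equals $\zeta D_2\zeta^{-1}$ by Convention~\ref{conv: ordered coset}; and $\Stab_{D_{2m}}(a)=e$ for $1\le a\le k$, since a relation $\zeta_m^\ell\tau\cdot a=a$ would force $(k+1)\mid a$, impossible for $0<a<k+1$. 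These orbits have sizes $m$, $m$, $2m$, which sum to $N$, so they partition $\mu_N$. I then define $f_{m,k}$ to be the $D_{2m}$-equivariant bijection determined by $0\mapsto e\,D_2$, $k+1\mapsto e\cdot\zeta D_2\zeta^{-1}$, and, for $1\le a\le k$, $a\mapsto e$ in the $a$-th copy of $D_{2m}$; the stabilizer identifications make this well defined and a bijection onto the stated coproduct.

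It remains to verify the order condition: letting the coproduct carry the order that makes $f_{m,k}$ an order isomorphism, this order must restrict on each summand $D_{2m}/D_2$, $D_{2m}$, $D_{2m}/\zeta D_2\zeta^{-1}$ to the order fixed in Convention~\ref{conv: ordered coset}. This is bookkeeping: one sorts each orbit inside $\{0,\dots,N-1\}$ and reads off the group elements via $f_{m,k}$. The orbit of $0$ is $0<2(k+1)<\dots<2(m-1)(k+1)$, giving $D_2<\zeta_m D_2<\dots<\zeta_m^{m-1}D_2$; the orbit of $k+1$ consists of the odd multiples $(k+1)<3(k+1)<\dots<(2m-1)(k+1)$, giving $\zeta D_2\zeta^{-1}<\zeta_m\cdot\zeta D_2\zeta^{-1}<\dots<\zeta_m^{m-1}\cdot\zeta D_2\zeta^{-1}$; and the orbit of $a$ sorts, using $0<a<k+1$, as the interleaved list $a,\ 2(k+1)-a,\ 2(k+1)+a,\ 4(k+1)-a,\ 4(k+1)+a,\ \dots,\ 2(m-1)(k+1)+a,\ 2m(k+1)-a$, which under $f_{m,k}$ reads $1,\ \zeta_m\tau,\ \zeta_m,\ \zeta_m^2\tau,\ \zeta_m^2,\ \dots,\ \zeta_m^{m-1},\ \tau$, exactly the order listed for $D_{2m}/e$ in Convention~\ref{conv: ordered coset} (the final term $2m(k+1)-a=\tau\cdot a$ accounting for $\tau$). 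All three match, which proves the lemma.

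The main thing to get right — and the only genuine subtlety — is this last matching: one must track that $\zeta_m^\ell\tau$ sends the basepoint $a$ to $2\ell(k+1)-a$, so that the "reflected" points $2\ell(k+1)-a$ fall strictly between the consecutive "rotated" points $2(\ell-1)(k+1)+a$ and $2\ell(k+1)+a$, reproducing precisely the interleaving $1,\zeta_m\tau,\zeta_m,\zeta_m^2\tau,\dots$ built into Convention~\ref{conv: ordered coset} (which was evidently designed for exactly this). The degenerate cases cause no trouble: when $k=0$ there are simply no free orbits, and when $m=1$ one has $\zeta_m=1$ and $\zeta D_2\zeta^{-1}=D_2$, but everything above goes through unchanged.
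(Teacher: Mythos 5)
Your proof is correct and, in substance, it is the argument the paper intends. The paper's own proof of Lemma~\ref{lem: isomorphism of D2m sets} is a one-liner that simply asserts the unordered isomorphism is ``clear'' and that the total orders in Convention~\ref{conv: ordered coset} were chosen precisely so the Lemma holds; your computation of the orbits and stabilizers of $0$, $k+1$, and $a$ (for $1\le a\le k$) in $\mathbb{Z}/2m(k+1)$, and the interleaving check that the pullback order reproduces the three orderings of Convention~\ref{conv: ordered coset}, is exactly the verification the paper leaves to the reader. There is no genuine difference in approach, only in the level of detail supplied.
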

\begin{proof}
Without	the total ordering this isomorphism is clear. The total ordering in Convention~\ref{conv: ordered coset} was chosen so that this lemma would be true. 
\end{proof}

\begin{rem}\label{rem: map of totally ordered sets}
We will also write $f_{m,k}$ for the underlying map of totally ordered sets from Lemma~\ref{lem: isomorphism of D2m sets} after forgetting the $D_{2m}$-set structure. 
\end{rem}
\noindent Given an $E_{\sigma}$-ring $R$, then $\iota_e^*R$ is an $E_1$-ring and $R$ is a $N_{e}^{D_2}\iota_e^*R$-bimodule with right action 
\[ \overline{\psi}_R\colon \thinspace R\wedge N_e^{D_2}\iota_e^*R\longrightarrow R\]
and left action 
\[ \overline{\psi}_L\colon \thinspace N_e^{D_2}\iota_e^*R\wedge R\longrightarrow  R.\]
We also note that there is an equivalence of categories 
\[ c_{\zeta}\colon \thinspace \Sp^{D_2} \longrightarrow  \Sp^{\zeta D_{2}\zeta^{-1}}\]
which is symmetric monoidal and therefore sends $E_{\sigma}$-rings in $\Sp^{D_2}$  to $E_{\sigma}$-rings in $\Sp^{\zeta D_{2}\zeta^{-1}}$. In particular, $c_{\zeta}R$ is a left $N_{e}^{\zeta D_2\zeta^{-1}}\iota_e^*R$-module with 
\[ c_{\zeta}(\overline{\psi}_L)\colon \thinspace N_e^{\zeta D_2\zeta^{-1}}\iota_e^*R\wedge c_{\zeta} R\longrightarrow c_{\zeta }R.\]

\begin{defin}\label{def:twistedmodule}
Let $R$ be an $E_{\sigma}$-ring. We define a right $N_{e}^{D_{2m}}\iota_e^*R$-module structure on $N_{D_2}^{D_{2m}}R$ as the composite
\[
	\xymatrix{
		 \psi_R\co  N_{D_2}^{D_{2m}}R \wedge N_{e}^{D_{2m}}\iota_e^*R \ar[r]^-{\cong} & N_{D_2}^{D_{2m}}( R\wedge N_{e}^{D_2}\iota_e^*R )   \ar[rr]^(.6){N_{D_2}^{D_{2m}}(\bar{\psi}_R)} && N_{D_2}^{D_{2m}}R.
		 } 
\]
We define a left $N_{e}^{D_{2m}}\iota_e^*R$-module structure on $N_{\zeta D_2 \zeta^{-1}}^{D_{2m}}c_{\zeta}R$
\[\psi_L\colon \thinspace N_e^{D_{2m}}\iota_e^*R\wedge N_{\zeta D_2 \zeta^{-1}}^{D_{2m}}c_{\zeta}R\longrightarrow N_{\zeta D_2 \zeta^{-1}}^{D_{2m}}c_{\zeta}R\] 
as the composite of the isomorphism
\begin{align*}
\label{twisted left module}	
	\xymatrix{
		 N_{e}^{D_{2m}}\iota_e^*R\wedge N_{\zeta D_2\zeta^{-1}}^{D_{2m}}c_{\zeta}R\ar[r]^(.45){\cong} &
		 N_{\zeta D_2\zeta^{-1}}^{D_{2m}}\left (N_e^{\zeta D_2\zeta^{-1}}\iota_e^*R\wedge c_{\zeta}R\right )}
\end{align*}
with the map
\[ 
	\xymatrix{
		N_{\zeta D_2\zeta^{-1}}^{D_{2m}}(N_e^{\zeta D_2\zeta^{-1}}\iota_e^*R \wedge c_{\zeta}R) \ar[rrr]^(.6){N_{\zeta D_2\zeta^{-1}}^{D_{2m}}(c_{\zeta}(\bar{\psi}_L))}&&& N_{\zeta D_2\zeta^{-1}}^{D_{2m}}c_{\zeta}R.
		}
\]
\end{defin}
\begin{exm}
When $m=2$, we note that $\zeta_{4}D_2\zeta_{4}^{-1}$ in $D_{8}$ can be identified with the diagonal subgroup $\triangle$ of $D_{4}$. In this case, $\triangle$ and $D_2$ are conjugate in $D_{8}$ even though they are not conjugate in $D_4$. We still define a left $N_e^{D_{4}}\iota_e^*R$-module structure on $N_{\triangle}^{D_{4}}c_{\zeta}R$ by composing the map
\begin{align*}
\label{twisted left module 2}	
	\xymatrix{
		 N_{e}^{D_{4}}\iota_e^*R\wedge N_{\triangle}^{D_{4}}c_{\zeta}R\ar[r]^-{\cong} &   N_{\triangle}^{D_{4}}\left (N_e^{\triangle}\iota_e^*R\wedge c_{\zeta}R   \right )
		 }
\end{align*}
with the map 
\[ 
	\xymatrix{
		N_{\triangle}^{D_{4}}(N_e^{\triangle}\iota_e^*R \wedge c_{\zeta}R) \ar[rrr]^(.6){N_{\triangle}^{D_{4}}(c_{\zeta}(\bar{\psi}_L))}&&& N_{\triangle}^{D_{4}}c_{\zeta}R.
		}
\]
\end{exm}

\begin{rem}\label{rem:simplicialD2msets}
Note that there is an isomorphism of simplicial $D_{2m}$-sets 
\[ \mu_{2m(\bullet+1)}\to D_{2m}/D_2 \amalg D_{2m}^{\amalg \bullet} \amalg D_{2m}/\zeta D_{2}\zeta^{-1}\]
which is given by the isomorphism $f_{m,k}$ of totally ordered $D_{2m}$-sets of Lemma~\ref{lem: isomorphism of D2m sets} on $k$-simplices. 
The simplicial maps on the left are given by the simplicial maps in the simplicial set $\text{sd}_{D_{2m}}S^1_{\bullet}$ where $S^1_{\bullet}$ is the minimal model of $S^1$ as a simplicial set. On the right, the face maps 
\[D_{2m}/D_2 \amalg D_{2m}^{\amalg k} \amalg  D_{2m}/\zeta D_2\zeta^{-1}\to  D_{2m}/D_2 \amalg D_{2m}^{\amalg k-1} \amalg D_{2m}/\zeta D_2\zeta^{-1}\] 
are given by the canonical quotient composed with the fold map 
\[D_{2m}/D_2\amalg D_{2m} \overset{D_{2m}/D_2\amalg q}{\longrightarrow} D_{2m}/D_2\amalg D_{2m}/D_2 \overset{\nabla}{\longrightarrow} D_{2m}/D_2\] 
for the first face map, the fold map
\[ D_{2m} \amalg D_{2m} \overset{\nabla}{\longrightarrow} D_{2m} \]
for the middle maps, and for the last face map it is given by the composite 
\[ D_{2m}\amalg D_{2m}/\zeta D_2\zeta^{-1}  \overset{q\amalg 1}{\longrightarrow} D_{2m}/\zeta D_2\zeta^{-1} \amalg D_{2m}/\zeta D_2\zeta^{-1} \overset{\nabla}{\longrightarrow}  D_{2m}/\zeta D_2\zeta^{-1}.\]
The degeneracy maps are given by the canonical inclusions. 
\end{rem}
 
\begin{prop}\label{charofdibar}
Suppose $R$ is an $E_{\sigma}$-ring in $D_{2}$-spectra 
indexed on the complete universe $\cV=\iota_{D_{2}}^*\cU$ where $\cU$ is a fixed complete $O(2)$-universe. More generally, let $\cV_n=\iota_{D_{2n}}^*\cU$ for $n\ge 1$. 
There is an isomorphism of simplicial $D_{2m}$-spectra 
\[
	\cI_{\widetilde{\cV}}^{\cV_m}(\operatorname{sd}_{D_{2m}}B^{\di}_{\bullet}(R)) \cong B_{\bullet} \left (N_{D_2}^{D_{2m}}R,N_{e}^{D_{2m}}\iota_e^*R,N_{\zeta D_2\zeta^{-1}}^{D_{2m}}c_{\zeta}R\right ).
\]
where we write $\widetilde{\cV}$ for the $D_{2}$-universe $\cV$ regarded as a $D_{2m}$-universe via inflation along the canonical quotient $D_{2m}\longrightarrow D_2$. 
\end{prop}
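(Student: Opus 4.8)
The plan is to establish the isomorphism levelwise, identifying the $k$-simplices on both sides as $D_{2m}$-spectra, then to check that these identifications are compatible with the simplicial structure maps, and finally to deal with the universe bookkeeping.

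\textbf{Levelwise identification.} Since $M=A=R$, the $k$-simplices of $B^{\di}_\bullet(R)$ are $R^{\wedge k+1}$, and unwinding $\operatorname{sd}_{D_{2m}}=\sq\,\operatorname{sd}_m$ gives $\big(\operatorname{sd}_{D_{2m}}B^{\di}_\bullet(R)\big)_k = B^{\di}_{2m(k+1)-1}(R) = R^{\wedge 2m(k+1)}$. The smash factors here are indexed by the $D_{2m}$-set obtained by restricting the indexing $D_{4m(k+1)}$-action on $B^{\di}_{2m(k+1)-1}(R)$ along the inclusion $D_{2m}\hookrightarrow D_{4m(k+1)}$ determined by the edgewise--Segal--Quillen subdivision; this is exactly the $D_{2m}$-set $\big(\operatorname{sd}_{D_{2m}}S^1_\bullet\big)_k\cong\mu_{2m(k+1)}$ appearing in Remark \ref{rem:simplicialD2msets}, and each copy of $R$ carries the $D_2$-action twisted by the conjugations dictated by the $\omega$- and $\tau$-structure of the dihedral bar construction. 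By Lemma \ref{lem: isomorphism of D2m sets} this totally ordered $D_{2m}$-set is isomorphic to $D_{2m}/D_2 \amalg D_{2m}^{\amalg k} \amalg D_{2m}/\zeta D_2\zeta^{-1}$. Feeding this orbit decomposition into the indexed-smash construction and applying the multiplicative decomposition of a norm indexed over a finite $G$-set (\cite[Proposition 6.2]{BlHi15}) — checking that the reflection-fixed factor at angle $0$ contributes $N_{D_2}^{D_{2m}}R$, the reflection-fixed factor at angle $\pi$ contributes $N_{\zeta D_2\zeta^{-1}}^{D_{2m}}c_\zeta R$ (the conjugate $c_\zeta R$ appearing because its stabilizer is $\zeta D_2\zeta^{-1}=\langle\zeta_m\tau\rangle$), and each free orbit contributes a copy of $N_e^{D_{2m}}\iota_e^*R$ — yields a natural isomorphism of $D_{2m}$-spectra
\[
\big(\operatorname{sd}_{D_{2m}}B^{\di}_\bullet(R)\big)_k \cong N_{D_2}^{D_{2m}}R \wedge \big(N_e^{D_{2m}}\iota_e^*R\big)^{\wedge k}\wedge N_{\zeta D_2\zeta^{-1}}^{D_{2m}}c_\zeta R,
\]
which is precisely the $k$-simplices of $B_\bullet\big(N_{D_2}^{D_{2m}}R, N_e^{D_{2m}}\iota_e^*R, N_{\zeta D_2\zeta^{-1}}^{D_{2m}}c_\zeta R\big)$.

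\textbf{Structure maps and universes.} By Remark \ref{rem:simplicialD2msets}, under the above identification the face maps of $\operatorname{sd}_{D_{2m}}B^{\di}_\bullet(R)$ are induced by the quotient and fold maps of $D_{2m}$-sets: the middle faces fold two adjacent free orbits, which under the indexed smash is the multiplication on the $E_1$-ring $N_e^{D_{2m}}\iota_e^*R$; the first face collapses $D_{2m}/D_2\amalg D_{2m}$ onto $D_{2m}/D_2$, which under the indexed smash is the right $N_e^{D_{2m}}\iota_e^*R$-module structure map $\psi_R$ of Definition \ref{def:twistedmodule} (that is, $N_{D_2}^{D_{2m}}$ applied to $\overline{\psi}_R$ after the canonical isomorphism), and symmetrically the last face yields the left module structure map $\psi_L$ of Definition \ref{def:twistedmodule}, the conjugation by $\zeta$ being exactly what Remark \ref{rem:simplicialD2msets} forces; the degeneracies correspond to insertion of units. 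These are the face and degeneracy maps of the two-sided bar construction, and all maps in sight are $D_{2m}$-equivariant. For universes, $\operatorname{sd}_{D_{2m}}B^{\di}_\bullet(R)$ lives levelwise in $\Sp_{\widetilde\cV}^{D_{2m}}$, and as in Definition \ref{def: norm} each norm on the right is $\cI_{\mathbb{R}^\infty}^{\cV_m}$ of an indexed smash formed on a trivial universe; since change of universe is strong symmetric monoidal, applying $\cI_{\widetilde\cV}^{\cV_m}$ levelwise on the left matches the $\cI_{\mathbb{R}^\infty}^{\cV_m}$ implicit on the right, so the levelwise isomorphism upgrades to an isomorphism of simplicial $D_{2m}$-spectra.

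\textbf{Main obstacle.} The delicate step is the levelwise identification. One must verify that the $D_{2m}$-set of smash factors of the $D_{2m}$-equivariant subdivision $\operatorname{sd}_{D_{2m}}B^{\di}_\bullet(R)$ is genuinely $\mu_{2m(k+1)}$ with the action recorded in Remark \ref{rem:simplicialD2msets}, and that the interplay of the levelwise involution $\omega$ with the levelwise $\tau$-twists assembles --- after Lemma \ref{lem: isomorphism of D2m sets} --- into $N_{D_2}^{D_{2m}}R$ at one end and the \emph{conjugated} norm $N_{\zeta D_2\zeta^{-1}}^{D_{2m}}c_\zeta R$ at the other, with the intermediate orbits free. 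This asymmetric bookkeeping --- which is the reason $c_\zeta$ and the twisted module structures of Definition \ref{def:twistedmodule} appear at all --- is the heart of the matter; once it is pinned down, compatibility with the simplicial maps and the universe statement follow by inspection.
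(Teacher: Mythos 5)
Your proposal is correct and takes essentially the same approach as the paper: the levelwise identification via Lemma \ref{lem: isomorphism of D2m sets}, identifying the $D_{2m}/D_2$ orbit with $N_{D_2}^{D_{2m}}R$, the free orbits with $N_e^{D_{2m}}\iota_e^*R$, and the $D_{2m}/\zeta D_2\zeta^{-1}$ orbit with $N_{\zeta D_2\zeta^{-1}}^{D_{2m}}c_\zeta R$, followed by matching the face and degeneracy maps against Remark \ref{rem:simplicialD2msets}. The only cosmetic difference is that you package the levelwise identification via the norm-over-a-$G$-set decomposition of \cite[Prop.~6.2]{BlHi15}, while the paper writes the isomorphism explicitly as the reordering $f_{m,k}$ followed by the $\tau$-twist $1^{\wedge m}\wedge(1\wedge\tau)^{\wedge mk}\wedge\tau^{\wedge m}$; these are the same computation.
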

\begin{proof}
When $R$ is an $E_{\sigma}$-ring, we explicitly define the simplicial map 
\[\cI_{\widetilde{\cV}}^{\cV_m}\operatorname{sd}_{D_{2m}}B^{\di}_{\bullet}(R) \to B_{\bullet }(N_{D_2}^{D_{2m}}R,N_{e}^{D_{2m}}\iota_e^*R,N_{\zeta D_2 \zeta^{-1}}^{D_{2m}}c_{\zeta}R) )\]
on $k$-simplices. There is an isomorphism given by composition of two maps. The first map 
\[  
 \xymatrix{
 f_{m,k} \colon \thinspace R^{\wedge \mu_{2m(k+1)}} \ar[r] & R^{\wedge m} \wedge (R^{\wedge m } \wedge (R^{\op})^{\wedge m})^{\wedge k} \wedge (R^{\op})^{\wedge m}
 }
\]
is the isomorphism induced by $f_{m,k}$ of Remark~\ref{rem: map of totally ordered sets} regarded simply as a map of totally ordered sets. The second map is  
\[ 
\xymatrix{ 
	R^{\wedge m} \wedge \left ((R\wedge R^{\op})^{\wedge m}\right )^{\wedge k} \wedge (R^{\op})^{\wedge m}  \ar[d]^(.5){1^{\wedge m}  \wedge  (1\wedge \tau )^{\wedge mk}\wedge \tau^{\wedge m} } \\ 
	N_{D_2}^{D_{2m}}R \wedge  (N_{e}^{D_{2m}}R)^{\wedge k} \wedge N_{\zeta D_2 \zeta^{-1}}^{D_{2m}}c_{\zeta}R,
	}
 \]
where we use Convention~\ref{conv: ordered coset}. This is a $D_{2m}$-equivariant isomorphism on $k$-simplices essentially because it comes from the isomorphism of ordered $D_{2m}$-sets of Lemma~\ref{lem: isomorphism of D2m sets}.
It follows that the map is compatible with the simplicial structure maps by comparing the structure maps in the isomorphism of simplicial $D_{2m}$-sets in Remark~\ref{rem:simplicialD2msets} to the structure maps on either side. 
\end{proof}
\noindent Consequently, for a flat $E_{\sigma}$-ring in the sense of~\cite[Definition 2.6]{DMPPR21}), we have the following multiplicative double coset formula. This generalizes the $m=1$ case appearing in~\cite{DMPPR21}.
\begin{thm}[Multiplicative Double Coset Formula]\label{thm:doublecoset}
When $R$ is a flat $E_{\sigma}$-ring and $m>0$, there is a stable equivalence of $D_{2m}$-spectra 
\[ \iota_{D_{2m}}^*N_{D_2}^{O(2)}R \simeq  N_{D_2}^{D_{2m}}R \wedge^{\bL}_{N_e^{D_{2m}}\iota_e^*R} N_{\zeta D_2\zeta^{-1}}^{D_{2m}}c_{\zeta}R. \]
\end{thm}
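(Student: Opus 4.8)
The plan is to deduce Theorem~\ref{thm:doublecoset} from Proposition~\ref{charofdibar} together with the comparison between the dihedral subdivision and the geometric realization, and the identification of the bar construction as a derived tensor product. First I would recall that for any Real cyclic (in particular dihedral) object $X_\bullet$, the dihedral subdivision $\operatorname{sd}_{D_{2m}}(\iota^*X_\bullet)$ has a simplicial $D_{2m}$-action and there is a canonical $D_{2m}$-equivariant homeomorphism $|\operatorname{sd}_{D_{2m}}(\iota^*X_\bullet)| \cong \iota_{D_{2m}}^*|X_\bullet|$ coming from Section~\ref{subdivision}. Applying this to $X_\bullet = B^{\di}_\bullet(R)$ and using the change-of-universe functor, this identifies $\iota_{D_{2m}}^* N_{D_2}^{O(2)}R = \iota_{D_{2m}}^* \cI_{\widetilde{\cV}}^{\cU}|B^{\di}_\bullet(R)|$ with $\cI_{\widetilde{\cV}}^{\cU}|\cI_{\widetilde{\cV}}^{\cV_m}\operatorname{sd}_{D_{2m}}B^{\di}_\bullet(R)|$, where I am using the compatibility of change-of-universe functors along $D_{2m}\hookrightarrow O(2)$. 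This is the point where the flatness hypothesis enters: I need the simplicial objects involved to be good (Reedy cofibrant) so that geometric realization is homotopy invariant and commutes with the relevant constructions, exactly as in the proof of Proposition~\ref{norm=bokstedt}.

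Next I would apply Proposition~\ref{charofdibar}, which gives an isomorphism of simplicial $D_{2m}$-spectra
\[
\cI_{\widetilde{\cV}}^{\cV_m}\operatorname{sd}_{D_{2m}}B^{\di}_\bullet(R) \cong B_\bullet\!\left(N_{D_2}^{D_{2m}}R,\, N_e^{D_{2m}}\iota_e^*R,\, N_{\zeta D_2\zeta^{-1}}^{D_{2m}}c_\zeta R\right),
\]
where the module structures are those of Definition~\ref{def:twistedmodule}. Taking geometric realizations (and the outer change of universe $\cI_{\widetilde{\cV}}^{\cU}$) then yields a stable equivalence of $D_{2m}$-spectra
\[
\iota_{D_{2m}}^* N_{D_2}^{O(2)}R \simeq \left| B_\bullet\!\left(N_{D_2}^{D_{2m}}R, N_e^{D_{2m}}\iota_e^*R, N_{\zeta D_2\zeta^{-1}}^{D_{2m}}c_\zeta R\right)\right|.
\]
Finally, the realization of the two-sided bar construction $B_\bullet(M, A, N)$ computes the derived smash product $M \wedge^{\mathbb{L}}_A N$ precisely when $M$, $A$, $N$ are suitably cofibrant/flat; since $R$ is flat, the norms $N_{D_2}^{D_{2m}}R$, $N_e^{D_{2m}}\iota_e^*R$, $N_{\zeta D_2\zeta^{-1}}^{D_{2m}}c_\zeta R$ are flat (norms preserve flatness, cf.\ \cite{HHR}), so the bar construction is Reedy cofibrant and its realization represents the derived relative smash product. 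This gives
\[
\iota_{D_{2m}}^* N_{D_2}^{O(2)}R \simeq N_{D_2}^{D_{2m}}R \wedge^{\mathbb{L}}_{N_e^{D_{2m}}\iota_e^*R} N_{\zeta D_2\zeta^{-1}}^{D_{2m}}c_\zeta R,
\]
as desired.

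The step I expect to require the most care is the bookkeeping in the identification of the subdivided dihedral bar construction with the bar construction in norms, i.e.\ ensuring that the module structure maps arising from the face maps of $\operatorname{sd}_{D_{2m}}B^{\di}_\bullet(R)$ genuinely match the twisted module structures $\psi_L$ and $\psi_R$ of Definition~\ref{def:twistedmodule} — but this is exactly the content of Proposition~\ref{charofdibar}, which I am entitled to invoke. The remaining genuine subtlety is the homotopical one: verifying that all simplicial $D_{2m}$-spectra in sight are good/Reedy cofibrant so that passing to geometric realization is homotopy invariant and so that $|B_\bullet(M,A,N)|$ models $M\wedge^{\mathbb{L}}_A N$. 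This is handled by the flatness assumption on $R$ together with the fact that the norm functors preserve flat cofibrations, mirroring the argument already used in the proofs of Proposition~\ref{norm=bokstedt} and Corollary~\ref{geom fix points of norm}; I would cite \cite[Lemma 1.6]{DMPPR21} for the invariance of realization on good simplicial objects.
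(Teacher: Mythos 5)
Your proposal is correct and follows essentially the same route as the paper: invoke Proposition~\ref{charofdibar} for the simplicial identification of the subdivided dihedral bar construction with the two-sided bar, then use flatness of the norms (the paper cites \cite{Sto11} and \cite{BDS18}) to identify the realization of the bar construction with the derived smash product. Your version is slightly more explicit about the $D_{2m}$-equivariant subdivision homeomorphism step that the paper leaves implicit, though there is a small notational slip in the displayed chain of change-of-universe functors (the outer functor should land in $\cV_m$ after restriction, not $\cU$), which does not affect the argument.
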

 
\begin{proof}
By Proposition~\ref{charofdibar}, we know there is an equivalence
\[
	\cI_{\widetilde{\cV}}^{\cV_n}(\operatorname{sd}_{D_{2m}}B^{\di}_{\bullet}(R))\cong B_{\bullet} (N_{D_2}^{D_{2m}}R,N_{e}^{D_{2m}}\iota_e^*R, N_{\zeta D_2\zeta^{-1}}^{D_{2m}}c_{\zeta}R).
\]
When $R$ is a flat $E_{\sigma}$-algebra in $D_2$-spectra in the sense of~\cite[Definition 2.6]{DMPPR21}), then $N_{D_2}^{D_{2m}}R$ is a flat  $N_e^{D_{2m}}\iota_e^*R$-module by~\cite[Theorem 3.4.22-23]{Sto11} and~\cite{BDS18} and therefore we may identify
 \[N_{D_2}^{D_{2m}}R \wedge^{\bL}_{N_e^{D_{2m}}\iota_e^*R}  N_{\zeta D_2\zeta^{-1}}^{D_{2m}}c_{\zeta}R\] 
 with the realization of the bar resolution of 
 $N_{D_2}^{D_{2m}}R$ by free $N_e^{D_{2m}}\iota_e^*R$-modules then smashed with the right $N_e^{D_{2m}}\iota_e^*R$-module $N_{\zeta D_2\zeta^{-1}}^{D_{2m}}c_{\zeta}R$. This is exactly
the realization of the simplicial spectrum 
\[B_{\bullet}(N_{D_2}^{D_{2m}}R,N_{e}^{D_{2m}}\iota_e^*R, N_{\zeta D_2\zeta^{-1}}^{D_{2m}}R).\] 
\end{proof}

\section{Real Hochschild homology}\label{sec: HR}

 In this section we address the question: What is the algebraic analogue of THR? We do this by defining a theory of Real Hochschild homology for discrete $E_{\sigma}$-rings. We then show how this leads to a theory of Witt vectors for rings with anti-involution. 
 
To begin, we recall some basic terminology in the theory of Mackey functors, we define norms in the category of Mackey functors, and $E_{\sigma}$-algebras in $D_2$-Mackey functors, which we call discrete $E_{\sigma}$-rings.  
\subsection{Mackey functors and norms}\label{rep functors}
See~\cite[\S 2]{BGHL19} for a more thorough review of the theory of Mackey functors. Here we simply recall the contructions and notation we use in the present paper. Let $G$ be a finite group. Write $\cA$ for the Burnside category of $G$.  For a finite $G$-set $X$,
$\mA^G_{X}:=\cA(X,-)$
denotes the representable $G$-Mackey functor represented by $X$. 
This construction forms a co-Mackey functor object in Mackey functors, by viewing it also as a functor in the variable $X$, so in particular
\begin{align*}\label{representable Mackey functor property}
	\mA^G_{X\amalg Y}=\cA(X\amalg Y,-)=\cA(X,-)\oplus\cA(Y,-)=\mA^G_X\oplus \mA^G_Y.
\end{align*}
Write $\mA^G$ for the Burnside Mackey functor associated to $G$, which can be identified with $\mA^G_{*}$ where $*=G/G$. This Mackey functor has the property that $\mA^G(G/H)=A(H)$
where $A(H)$ denotes the Burnside ring for a finite group $H$. Recall that as an abelian group $A(H)$ is free with basis $\{ [H/K] \}$ where $K$ ranges over all conjugacy classes of subgroups $K\le H$. When $K=H$ we simply write $1=[H/H]$ and when $K$ is the trivial group we simply write $[H]=[H/\{e\}]$. The transfer and restriction maps in $\mA^G$ are given by induction and restriction maps on finite sets.  
\begin{exm}\label{ex: burnside c2}
The Burnside Mackey functor $\mA^{D_2}$ can be described by the following diagram. 
\[\xymatrix{ 1 \ar@{|->}[d] & [D_2] \ar@{|->}[d] & \m{M}^{D_2}(D_2/D_2)=\mathbb{Z}\langle1, [D_2] \rangle  \ar@/_1pc/[d]_{res_e^{D_2}} & [D_2]  \\
1 & 2 & \m{M}^{D_2}(D_2/*)=\mathbb{Z}\langle 1 \rangle \ar@/_1pc/[u]_{tr_e^{
D_2}} & 1 \ar@{|->}[u]
}\]	
\end{exm}
\noindent Given a finite group $G$, a subgroup $H\le G$, and a $H$-set $X$ we write $\Map^H(G,X)$
for the $G$-set of $H$-equivariant maps from $G$ to $X$, which is a functor in the variable $X$ known as coinduction.

Recall that these categories $\Sp_{\cU}^G$ of $G$-spectra indexed on a complete universe $\cU$ and $\Mak_G$ of $G$-Mackey functors are both symmetric monoidal, and the symmetric monoidal structures are compatible in the following sense.
 \begin{prop}\cite{LM06}\label{prop:smabox}
 For $X$ and $Y$ cofibrant, (-1)-connected orthogonal $G$-spectra, there is a natural isomorphism
 \[
 \m{\pi}_0(X \wedge Y) \cong \m{\pi}_0 X \square \m{\pi}_0 Y.
 \]
 \end{prop}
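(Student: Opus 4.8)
The plan is to prove the isomorphism $\m{\pi}_0(X \wedge Y) \cong \m{\pi}_0 X \square \m{\pi}_0 Y$ by reducing to the known universal property of the box product of Mackey functors and the behavior of smash products on connectivity. The reference \cite{LM06} is cited, so strictly speaking one could invoke it directly; but a self-contained argument proceeds as follows.

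First I would recall that for a $(-1)$-connected orthogonal $G$-spectrum $Z$, the Mackey functor $\m{\pi}_0 Z$ determines the Postnikov truncation $\tau_{\le 0} Z$, which is the Eilenberg--MacLane $G$-spectrum $H\m{\pi}_0 Z$; and that $\m{\pi}_0$ is a symmetric monoidal functor from $(-1)$-connected $G$-spectra (with a suitable model for the smash product on cofibrant objects) to $G$-Mackey functors (with the box product), where the box product $\square$ is characterized by the universal property that maps of Mackey functors $\m{M}\square\m{N}\to\m{L}$ correspond to bilinear maps $\m{M}\times\m{N}\to\m{L}$ (equivalently, it is the Day convolution product on $\Mak_G$ viewed as additive functors on the Burnside category). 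Next I would observe that since $X$ and $Y$ are $(-1)$-connected and cofibrant, the smash product $X\wedge Y$ is again $(-1)$-connected, so $\m{\pi}_0(X\wedge Y)$ is computed from $\tau_{\le 0}(X\wedge Y)$; and the natural pairing $\m{\pi}_0 X \times \m{\pi}_0 Y \to \m{\pi}_0(X\wedge Y)$ induced by the smash product is bilinear in the appropriate sense, hence factors through a natural map $\m{\pi}_0 X \square \m{\pi}_0 Y \to \m{\pi}_0(X\wedge Y)$.

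To show this map is an isomorphism, I would check it after evaluating on each orbit $G/H$, or equivalently reduce to the Eilenberg--MacLane case: replacing $X$ and $Y$ by $H\m{\pi}_0 X$ and $H\m{\pi}_0 Y$ via the truncation maps induces isomorphisms on $\m{\pi}_0$ of both sides (for the left side because $\square$ only depends on $\m{\pi}_0$; for the right side because $\tau_{\le 0}(X\wedge Y)\simeq \tau_{\le 0}(H\m{\pi}_0 X \wedge H\m{\pi}_0 Y)$ as $(-1)$-connectivity is preserved and $\m{\pi}_0$ is unchanged by $0$-truncation of the factors). Then the statement becomes the well-known identification $\m{\pi}_0(H\m{M}\wedge H\m{N})\cong \m{M}\square \m{N}$, which is exactly the content of \cite{LM06}: the zeroth homotopy Mackey functor of the smash of two Eilenberg--MacLane spectra is the box product. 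One can verify this on orbits using the bar/simplicial model for the smash product of EM-spectra, or cite it directly.

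The main obstacle, such as it is, is purely bookkeeping: making precise which model of the smash product one uses so that the pairing on $\m{\pi}_0$ is strictly bilinear and natural, and confirming that cofibrancy of $X$ and $Y$ guarantees that $X\wedge Y$ has the expected connectivity and that no derived-functor subtleties intervene. Since the proposition is stated for cofibrant $(-1)$-connected spectra, all of these technicalities are handled by standard model-categorical input (e.g. the flatness of cofibrant objects, preservation of connectivity by $\wedge$ on cofibrant objects), so the proof is essentially a citation of \cite{LM06} combined with the truncation reduction above. I would therefore keep the written proof short: state the reduction to Eilenberg--MacLane spectra and cite \cite{LM06}.
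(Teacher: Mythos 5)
The paper offers no proof of this proposition; it is stated purely as a citation to \cite{LM06}, so there is no in-text argument to compare against. Your sketch is a correct unpacking of why that citation applies. The reduction through Postnikov truncation is sound: the fiber of the truncation map $X \to H\m{\pi}_0 X$ is $0$-connected, and the standard equivariant connectivity estimate for the smash product of cofibrant $G$-spectra (for $G$ finite) shows that smashing a $0$-connected spectrum with a $(-1)$-connected one stays $0$-connected, so $\m{\pi}_0(X\wedge Y)\to\m{\pi}_0(H\m{\pi}_0X\wedge Y)$ and its analogue in the other variable are isomorphisms. The only thing worth noting is that the detour through Eilenberg--MacLane spectra is optional: the result of \cite{LM06} you end up citing is really an equivariant K\"unneth spectral sequence
\[
\m{\Tor}^{\mA}_{p,q}\bigl(\m{\pi}_*X,\m{\pi}_*Y\bigr)\Longrightarrow \m{\pi}_{p+q}(X\wedge Y),
\]
and for $(-1)$-connected $X,Y$ the total-degree-zero edge is exactly $\m{\pi}_0 X\square\m{\pi}_0 Y\cong\m{\pi}_0(X\wedge Y)$ with no room for differentials; the Eilenberg--MacLane identification you quote is the special case of the same spectral sequence. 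Either route terminates in the same citation, so your argument is correct and consistent with what the paper intends.
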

 A $G$-Mackey functor $\m{M}$ has an associated Eilenberg--MacLane $G$-spectrum, $H\m{M}$. The defining property of this spectrum is that
 \[
 \m{\pi}^G_k(H\m{M}) \cong \left\{ \begin{array}{ll} \m{M} &\text{ if } k = 0 \\
 0 &\text{ if } k \neq 0. \\
 \end{array} \right.
 \]
 It then follows from Proposition~\ref{prop:smabox} above that the box product of Mackey functors has a homotopical description:
 \[
 \m{M} \square \m{N} \cong \m{\pi}_0(H\m{M} \wedge H\m{N}). 
 \]

 The category $\Sp_{\cU}^G$ has an equivariant enrichment of the symmetric monoidal product, a $G$-symmetric monoidal category structure~\cite{HHR,HillHopkins16}. Such a $G$-symmetric monoidal structure requires multiplicative norms for all subgroups $H\le G$. In $\Sp_{\cU}^G$ these are given by the Hill--Hopkins--Ravenel norm. The $G$-symmetric monoidal structure on $\Sp_{\cU}^G$ induces such a structure on Mack$_G$ as well. In particular, one can define norms for $G$-Mackey functors. 

 \begin{defin}[cf. {\cite{HillHopkins16}}]\label{def: norm in Mackey}
 Given a finite group $G$ with subgroup $H$ and an $H$-Mackey functor $\mM$, the norm in Mackey functors is defined by 
 \[ N_{H}^{G}\mM = \upi_0^{G}N_H^G H\mM.\]
 \end{defin}
\noindent These Mackey functor norms also appear under a different guise in earlier work of Bouc~\cite{Bouc}.
 
 The following lemma is immediate. 
 \begin{lem}\label{norm of sifted colimit}
 The norm in Mackey functors commutes with sifted colimits. 
 \end{lem}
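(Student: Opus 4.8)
The plan is to factor the norm on Mackey functors as the composite
\[
\Mak_H \xrightarrow{\;H\;} (\Sp_{\cU}^H)_{\ge 0}\xrightarrow{\;N_H^G\;}(\Sp_{\cU}^G)_{\ge 0}\xrightarrow{\;\underline{\pi}_0^G\;}\Mak_G
\]
and to combine three inputs: (i) the topological norm $N_H^G\colon\Sp_{\cU}^H\to\Sp_{\cU}^G$ preserves sifted homotopy colimits (in particular filtered colimits and geometric realizations of simplicial objects), as in \cite[Appendix B]{HHR} and \cite{ABGHLM18} --- the iterated indexed smash power preserves filtered colimits because the diagonal $I\to I^{\times n}$ is final and preserves realizations by the generalized Eilenberg--Zilber theorem, while the remaining constituents (change of universe, induction, restriction) are exact; (ii) $\underline{\pi}_0^G$ preserves sifted colimits, since homotopy Mackey functors commute with filtered homotopy colimits and, for a simplicial connective $G$-spectrum $Y_\bullet$, the edge of the skeletal spectral sequence identifies $\underline{\pi}_0^G|Y_\bullet|$ with $\operatorname{coeq}(\underline{\pi}_0^GY_1\rightrightarrows\underline{\pi}_0^GY_0)$, which is the colimit of $\underline{\pi}_0^GY_\bullet$ in the abelian category $\Mak_G$; and (iii) the compatibility $\underline{\pi}_0^GN_H^GX\cong N_H^G\underline{\pi}_0^HX$ for connective $X$ --- equivalently, that the $0$-truncation $X\to\tau_{\le 0}X=H\underline{\pi}_0^HX$ induces an isomorphism on $\underline{\pi}_0^GN_H^G(-)$. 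This last statement is the algebraic shadow of \ref{prop:smabox}, and is part of the assertion that the $G$-symmetric monoidal structure on $\Sp_{\cU}^G$ descends to $\Mak_G$ (cf.\ \cite{HillHopkins16}); it is exactly what makes Definition \ref{def: norm in Mackey} well posed.

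Granting (i)--(iii), the lemma is a formal manipulation. Let $i\mapsto\m{M}_i$ be a sifted diagram in $\Mak_H$ and set $X=\hocolim_i H\m{M}_i$ in $(\Sp_{\cU}^H)_{\ge 0}$, the homotopy colimit. By (ii), $\underline{\pi}_0^HX\cong\colim_i\m{M}_i$, so $\tau_{\le 0}X\simeq H(\colim_i\m{M}_i)$. Then
\[
N_H^G(\textstyle\colim_i\m{M}_i)=\underline{\pi}_0^GN_H^GH(\textstyle\colim_i\m{M}_i)=\underline{\pi}_0^GN_H^G\tau_{\le 0}X\overset{(iii)}{\cong}\underline{\pi}_0^GN_H^GX,
\]
and by (i) and (ii) this equals $\underline{\pi}_0^G\hocolim_i N_H^GH\m{M}_i\cong\colim_i\underline{\pi}_0^GN_H^GH\m{M}_i=\colim_i N_H^G\m{M}_i$, as desired.

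The only point that is not purely formal is input (iii). I would prove it using geometric fixed points: writing $C$ for the cofiber of $X\to\tau_{\le 0}X$, so $C$ is $1$-connective, the double coset formula for norms together with the diagonal identity $\Phi^KN_L^K\simeq\Phi^L$ gives, for each $K\le G$, an identification $\Phi^KN_H^GZ\simeq\bigwedge_{[\gamma]\in K\backslash G/H}\Phi^{K\cap\gamma H\gamma^{-1}}(c_\gamma\iota^*Z)$ natural in $Z$; since restriction, conjugation, and $\Phi^L$ each take the truncation $X\to\tau_{\le 0}X$ to a map with $1$-connective cofiber, and $\pi_0$ of a smash of connective spectra is the tensor product of the $\pi_0$'s (\ref{prop:smabox}), the map $\pi_0\Phi^KN_H^GX\to\pi_0\Phi^KN_H^G\tau_{\le 0}X$ is an isomorphism for every $K\le G$. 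One then upgrades this to an isomorphism of $\underline{\pi}_0^G$ using the connectivity estimates for norms in \cite{HHR} (which bound the connectivity of $N_H^G$ applied to a $\underline{\pi}_0$-equivalence), or, equivalently, by the standard isotropy-separation induction on $|K|$ identifying a $\underline{\pi}_0$-equivalence of connective $G$-spectra as a map inducing $\pi_0$-isomorphisms on all geometric fixed points. I expect verifying (iii) --- in particular keeping track of the geometric-fixed-point double coset formula --- to be the only genuinely nontrivial part; everything else is the formal interplay of functors preserving sifted colimits recorded above.
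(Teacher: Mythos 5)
Your proof is correct, and it fleshes out what the paper simply asserts to be ``immediate'' from Definition~\ref{def: norm in Mackey}. You are right to isolate input (iii) as the genuinely non-formal step: one cannot argue that $\m{\pi}_0^G\circ N_H^G\circ H$ preserves sifted colimits factor by factor, since $H(-)$ itself does not (for a reflexive pair $\m{M}_\bullet$, $\m{\pi}_q^H|H\m{M}_\bullet|$ computes homology of the simplicial Mackey functor in positive degrees), and the topological norm is not exact. Your geometric fixed point route to (iii) is the standard one; the only cosmetic slip is that the cofiber of $X\to\tau_{\le 0}X$ is $2$-connective, not $1$-connective (it is the suspension of the $1$-connective fiber $\tau_{\ge 1}X$) --- which is stronger than what you used, so nothing breaks. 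Worth flagging, however, is a cleaner route that the authors almost certainly have in mind, since they invoke it explicitly in the proof of Lemma~\ref{z2todpknormofz}: the Mackey functor norm is the left Kan extension of coinduction on representables, and $\Mak_H$, like any category of models of an algebraic theory, is the free cocompletion under sifted colimits of its full subcategory of finitely generated free objects (finite sums of representables). Coinduction sends representables to representables (\cite[Proposition 3.7]{BGHL19}), so the Kan-extended functor $N_H^G$ is by construction the unique sifted-colimit-preserving extension --- no connectivity bookkeeping required. Both arguments are valid; yours hews more closely to the literal spectrum-level formula in Definition~\ref{def: norm in Mackey}, while the Kan extension argument is what makes the lemma actually ``immediate.''
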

 
\subsection{Discrete \texorpdfstring{$E_{\sigma}$}{Esigma}-rings}
 In Section~\ref{sec: norm}, we discussed $E_{\sigma}$-rings in $D_2$-spectra, which serve as the input for Real topological Hochschild homology. We now define their algebraic analogues, discrete $E_{\sigma}$-rings.  These discrete $E_{\sigma}$-rings will be the input for our construction of Real Hochschild homology. 
 
\begin{defin}\label{EValgebras}
Let $V$ be a finite dimensional representation of a finite group $G$. An $E_{V}$-algebra in $G$-Mackey functors is a $\mathcal{P}_V$-algebra in $G$-Mackey functors, where $\mathcal{P}_V$ is the monad
 \[\mathcal{P}_V(-)=\bigoplus_{n\ge 0} \underline{\pi}_0^{G}\left ( ((E_{V,n})_{+}\wedge_{\Sigma_n} H(-)^{\wedge n}\right ) \]
 and $H(-)$ is the Eilenberg--MacLane funtor.
\end{defin}
\noindent When \(V=\sigma\), this monad is particularly simple, since the spaces in the \(E_{\sigma}\)-operad are homotopy discrete. 

\begin{prop}
For \(D_{2}\)-Mackey functors, the monad \(\mathcal P_{\sigma}\) is given by
\[
\mathcal P_{\sigma}(\mM)=T\big(N_{e}^{D_{2}}i_{e}^{\ast}\mM)\square (\m{A}\oplus \mM),
\]
where \(T(-)\) is the free associative algebra functor.
\end{prop}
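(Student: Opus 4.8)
The plan is to parallel the proof of the analogous statement for $D_2$-spectra — the proposition above computing $\mathcal{P}_\sigma(X) \simeq T(N^{D_2}X)\wedge(S\vee X)$ — and then descend to Mackey functors by applying $\underline{\pi}_0^{D_2}$ and invoking Proposition \ref{prop:smabox}.

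First I would unwind Definition \ref{EValgebras}: by definition $\mathcal{P}_\sigma(\mM) = \bigoplus_{n\ge 0}\underline{\pi}_0^{D_2}\big((E_{\sigma,n})_+\wedge_{\Sigma_n}(H\mM)^{\wedge n}\big)$. Since the spaces $E_{\sigma,n}$ carry free $\Sigma_n$-action and are $(\Sigma_n\times D_2)$-equivalent to $\Assoc_{\sigma,n} = (\Sigma_n\times D_2)/\Gamma_n$, the homotopy orbit $(E_{\sigma,n})_+\wedge_{\Sigma_n}(H\mM)^{\wedge n}$ agrees up to weak equivalence with $(\Assoc_{\sigma,n})_+\wedge_{\Sigma_n}(H\mM)^{\wedge n}$ after cofibrant replacement of $H\mM$. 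Applying the equivariant twisted James splitting exactly as in the proof of the spectrum-level proposition (cf. \cite[Thm.~4.3]{Hil17}), using the decomposition of the $D_2$-set $\mathbf{n} = \{1,\dots,n\}$ into copies of $D_2/D_2$ and $D_2/e$, yields $D_2$-equivalences
\[
(E_{\sigma,2k})_+\wedge_{\Sigma_{2k}}(H\mM)^{\wedge 2k}\;\simeq\;\big(N_e^{D_2}i_e^*(H\mM)\big)^{\wedge k},\qquad (E_{\sigma,2k+1})_+\wedge_{\Sigma_{2k+1}}(H\mM)^{\wedge(2k+1)}\;\simeq\;H\mM\wedge\big(N_e^{D_2}i_e^*(H\mM)\big)^{\wedge k}
\]
for every $k\ge 0$, the cases $n=0,1$ being the $k=0$ instances.

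Next I would pass to $\underline{\pi}_0^{D_2}$. Both $H\mM$ and $N_e^{D_2}i_e^*(H\mM)$ are $(-1)$-connected — the norm of a connective spectrum is connective — so every iterated smash product appearing above, together with all of its intermediate smash factors, is $(-1)$-connected, and Proposition \ref{prop:smabox} applies repeatedly. Combined with the identity $\underline{\pi}_0^{D_2}N_e^{D_2}i_e^*(H\mM) = N_e^{D_2}i_e^*\mM$ of Definition \ref{def: norm in Mackey}, this makes $\underline{\pi}_0^{D_2}$ of the even term equal to $(N_e^{D_2}i_e^*\mM)^{\square k}$ and of the odd term equal to $\mM\square(N_e^{D_2}i_e^*\mM)^{\square k}$. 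Summing over $n$ and separating the even from the odd contributions gives $\mathcal{P}_\sigma(\mM)\cong\bigoplus_{k\ge 0}(N_e^{D_2}i_e^*\mM)^{\square k}\;\oplus\;\bigoplus_{k\ge 0}\mM\square(N_e^{D_2}i_e^*\mM)^{\square k}=T(N_e^{D_2}i_e^*\mM)\oplus\big(\mM\square T(N_e^{D_2}i_e^*\mM)\big)$, using that the free associative algebra functor is $T(\m{N})=\bigoplus_{k\ge 0}\m{N}^{\square k}$ and that $\square$ commutes with direct sums. Distributing $\square$ over $\oplus$ and using that $\m{A}$ is the unit for $\square$ rewrites this as $T(N_e^{D_2}i_e^*\mM)\square(\m{A}\oplus\mM)$, as claimed. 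Finally, just as in the associative/$E_1$ case, I would note that the operad composition maps translate under this identification into concatenation of words in the tensor-algebra factor together with the evident left and right action of that algebra on $\m{A}\oplus\mM$, so the isomorphism is in fact one of monads.

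The step requiring the most care is the connectivity bookkeeping needed to apply Proposition \ref{prop:smabox}: although $H\mM$ is itself $0$-truncated, the norm $N_e^{D_2}i_e^*(H\mM)$ is in general not, so a naive degree-zero computation does not suffice and one must genuinely verify that every intermediate smash product remains $(-1)$-connected before commuting $\underline{\pi}_0^{D_2}$ past it. The remaining ingredients — carrying out the twisted James splitting $D_2$-equivariantly and checking compatibility with the operad structure maps — are routine and are already modeled on the spectrum-level computation recorded above.
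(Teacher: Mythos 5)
Your proof is correct and takes essentially the same route as the paper: split by parity of $n$ via the twisted James decomposition, identify the even/odd summands with norm powers (possibly smashed with one extra copy of $H\mM$), pass to $\underline{\pi}_0^{D_2}$ using Proposition \ref{prop:smabox}, and regroup into $T(N_e^{D_2}i_e^*\mM)\square(\m{A}\oplus\mM)$. The only difference is that you make the connectivity bookkeeping for the iterated application of \ref{prop:smabox} explicit, whereas the paper leaves it tacit; this is a sound refinement, not a different argument.
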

\begin{proof}
Recall that we have an equivariant equivalence
\[
E_{\sigma,n}\simeq (D_{2}\times\Sigma_{n})/\Gamma_{n}. 
\]
If \(n\) is even, then we have natural isomorphisms
\[
\m{\pi}_{0}\big(((E_{\sigma,n})_+\wedge_{\Sigma_{n}} H\mM^{\wedge n}\big)\cong \m{\pi}_{0} \big((N^{D_{2}}_e \iota_e^* H\mM)^{\wedge n/2}\big)\cong (N^{D_{2}}_e\iota_e^*\mM)^{\square n/2}.
\]
If \(n\) is odd, then since fixed points contributes a box-factor of \(\mM\) itself:
\[
\m{\pi}_{0}\big(((E_{\sigma,n})_+\wedge_{\Sigma_{n}} H\mM^{\wedge n}\big)\cong  (N^{D_{2}}_e\iota_e^*\mM)^{\square \lfloor n/2\rfloor}\square \mM.
\]
The result follows from grouping the terms according to the number of box-factors involving the norm.
\end{proof}

\begin{defin}\label{defn:discreteEsigma}
By a \emph{discrete \(E_{\sigma}\)-ring}, we mean an algebra over the monad \(\mathcal P_{\sigma}\) in the category of $D_2$-Mackey functors. 
\end{defin}

We can further unpack this structure to describe the monoids.

\begin{lem}\label{Esigmastructure}
A discrete \(E_{\sigma}\)-ring is the following data: 
\begin{enumerate}
\item A $D_2$-Mackey functor \(\mM\), together with an associative product on \(\mM(D_{2}/e)\) for which the Weyl action is an anti-homomorphism,
\item a \(N_{e}^{D_{2}}\iota_e^*\mM\)-bimodule structure on \(\mM\) that restricts to the standard action of \(\mM(D_{2}/e)\otimes \mM(D_{2}/e)^{\textup{op}}\) on \(\mM(D_{2}/e)\).
\item an element \(1\in \mM(D_{2}/D_{2})\) that restricts to the element \(1\in\mM(D_{2}/e)\).
\end{enumerate}
\end{lem}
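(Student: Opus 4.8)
The plan is to unpack the monad $\mathcal P_{\sigma}$ directly, mirroring the route used in Section~\ref{sec:Esigmarings} to characterize $E_{\sigma}$-rings in spectra, but carried out additively in $D_{2}$-Mackey functors. First I would feed in the description from the preceding Proposition,
\[
\mathcal P_{\sigma}(\mM)\cong T\big(N_{e}^{D_{2}}\iota_{e}^{\ast}\mM\big)\square(\mA\oplus\mM),
\]
and distribute $\square$ over $\oplus$ to get $\mathcal P_{\sigma}(\mM)\cong T\big(N_{e}^{D_{2}}\iota_{e}^{\ast}\mM\big)\oplus\big(T(N_{e}^{D_{2}}\iota_{e}^{\ast}\mM)\square\mM\big)$. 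The degree-zero (i.e.\ $\square^{0}$) summand of the first factor is the unit $\mA$, so restricting an algebra structure map $\mathcal P_{\sigma}(\mM)\to\mM$ to it is the datum of a map $\mA\to\mM$; since $\mA=\mA^{D_{2}}_{D_{2}/D_{2}}$ is representable this is exactly an element $1\in\mM(D_{2}/D_{2})$, and the $\mathcal P_{\sigma}$-unit axiom forces $\mathrm{res}^{D_{2}}_{e}(1)=1\in\mM(D_{2}/e)$. This gives item~(3).

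Next I would read off the remaining structure from the algebra map summand by summand, using that $T(-)$ is the free associative-algebra monad for $\square$ and that the multiplication of $\mathcal P_{\sigma}$ restricts to concatenation on $T\big(N_{e}^{D_{2}}\iota_{e}^{\ast}\mM\big)$ and to the evident left and right actions on the $\square(\mA\oplus\mM)$-factor. The summand $N_{e}^{D_{2}}\iota_{e}^{\ast}\mM\square\mM$ produces the left action map $N_{e}^{D_{2}}\iota_{e}^{\ast}\mM\square\mM\to\mM$ (and symmetrically the right action), and $\mathcal P_{\sigma}$-associativity together with unitality forces these to satisfy the bimodule axioms and to be compatible with the unit of the first paragraph; every higher summand is then determined by these and by the concatenation product on $T\big(N_{e}^{D_{2}}\iota_{e}^{\ast}\mM\big)$. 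This is item~(2), the algebraic shadow of conditions~(1)--(2) in the characterization of $E_{\sigma}$-rings in spectra recalled in Section~\ref{sec:Esigmarings}, and it parallels the description of $E_{0}$-$A$-algebras in Example~\ref{standard E0 ring}.

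Finally, to extract item~(1), I would restrict the bimodule structure of item~(2) along $\iota_{e}^{\ast}$ and evaluate at $D_{2}/e$. There $\big(N_{e}^{D_{2}}\iota_{e}^{\ast}\mM\big)(D_{2}/e)\cong\mM(D_{2}/e)^{\otimes 2}$, with the Weyl action of $D_{2}$ swapping the two tensor factors, so the left action becomes a $\mathbb Z$-linear map $\mM(D_{2}/e)^{\otimes 2}\otimes\mM(D_{2}/e)\to\mM(D_{2}/e)$. The ``standard action'' requirement --- which is exactly what the presentation $T(V\otimes V)\otimes(\mathbb Z\oplus V)$ of the Remark following the preceding Proposition is recording --- says this map has the form $a_{1}\otimes a_{2}\otimes a\mapsto a_{1}\cdot a\cdot a_{2}$ for an associative multiplication $\cdot$ on $\mM(D_{2}/e)$; this is item~(1). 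Confronting this formula with the factor swap above shows the Weyl action reverses $\cdot$, i.e.\ is an anti-homomorphism, exactly as in Remark~\ref{rem: ring with anti-involution}. Conversely, from data~(1)--(3) one rebuilds the $\mathcal P_{\sigma}$-algebra structure map out of the standard $E_{0}$-algebra construction of Example~\ref{standard E0 ring} read in Mackey functors together with the concatenation product, and the two passages are mutually inverse; hence discrete $E_{\sigma}$-rings are precisely the data~(1)--(3).

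I expect the only genuine obstacle to be bookkeeping: matching the monad multiplication of $\mathcal P_{\sigma}$ --- how nested copies of $T\big(N_{e}^{D_{2}}\iota_{e}^{\ast}\mM\big)\square(\mA\oplus\mM)$ compose --- against the ``unfolding'' of the tensor-algebra presentation $T(V\otimes V)\otimes(\mathbb Z\oplus V)$, so as to check that $\mathcal P_{\sigma}$-associativity and $\mathcal P_{\sigma}$-unitality say precisely that $\mM(D_{2}/e)$ is associative, that the two actions obey the bimodule axioms, and that $1$ is a two-sided unit, and impose nothing further. This is essentially the same verification as for the spectrum-level statement in Section~\ref{sec:Esigmarings}, now done additively in Mackey functors; the only extra ingredient beyond that discussion is the representability $\mA=\mA^{D_{2}}_{D_{2}/D_{2}}$.
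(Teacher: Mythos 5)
The paper states this lemma without proof, so there is no in-paper argument to compare against; I'll assess your sketch on its own. Your strategy --- unpack $\mathcal{P}_{\sigma}(\mM)$ via the tensor-algebra presentation, read off item~(3) from the degree-zero summand $\mA$ by representability, and appeal to the monad algebra axioms for the compatibilities --- is the right one, and most of the bookkeeping is correct.

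The one step that needs repair is the extraction of item~(1). You write that the action map $N_{e}^{D_{2}}\iota_e^*\mM\square\mM\to\mM$ at level $D_{2}/e$ ``has the form $a_{1}\otimes a_{2}\otimes a\mapsto a_{1}\cdot a\cdot a_{2}$ for an associative multiplication $\cdot$'' because of the ``standard action requirement''; but in the direction ``$\mathcal{P}_{\sigma}$-algebra $\Rightarrow$ data (1)--(3)'' that requirement is a conclusion, not a hypothesis, and the multiplication $\cdot$ does not yet exist, so the argument is circular as written. The binary product should instead be read off from the summand you never address: the $\square^{1}$-piece of the first factor, $N_{e}^{D_{2}}\iota_e^*\mM\cong N_{e}^{D_{2}}\iota_e^*\mM\square\mA$, whose $D_{2}/e$-component is literally a map $\mM(D_{2}/e)\otimes\mM(D_{2}/e)\to\mM(D_{2}/e)$; associativity then follows from the monad algebra axioms relating this $\square^{1}\square\mA$ summand to the $\square^{1}\square\mM$ and $\square^{0}$ summands, which also identify the ternary action with $a_{1}\cdot a\cdot a_{2}$, i.e.\ establishes the ``standard action'' form rather than assuming it. A smaller point: the parenthetical ``(and symmetrically the right action)'' suggests an $\mM\square N_{e}^{D_{2}}\iota_e^*\mM$ summand that is not present in the decomposition. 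The whole bimodule structure is carried by the single $N_{e}^{D_{2}}\iota_e^*\mM\square\mM\to\mM$ action, which unfolds into left and right $\mM(D_{2}/e)$-actions precisely because $\big(N_{e}^{D_{2}}\iota_e^*\mM\big)(D_{2}/e)\cong\mM(D_{2}/e)\otimes\mM(D_{2}/e)$ with the Weyl generator swapping and involuting the factors; you say this correctly a couple of sentences later, so the phrase is misleading rather than wrong.
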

\begin{rem}
These conditions are almost 
those of a Hermitian Mackey functor in the sense of~\cite{DO19}: the only difference is that a discrete $E_{\sigma}$-ring includes the additional assumption that there is a fixed unit element $1\in \mM(D_2/D_2)$, or in other words, an $E_0$-$N^{D_2}_e\iota_e^*\mM(D_2/e)$-ring structure on $\mM$.  
\end{rem}
\begin{exm}\label{exm: pi0 of Esigma alg}
Given an $E_{\sigma}$-ring $R$ it is clear that $\m{\pi}_0^{D_2}(R)$ is a discrete $E_{\sigma}$-ring. In fact, this does not depend on our choice of $E_{\sigma}$-operad. By Example~\ref{exm: commutative implies sigma}, we also conclude that if $R$ is a commutative monoid in $\Sp_{\cV}^{D_2}$, then $\upi_0^{D_2}R$ is a discrete $E_\sigma$-ring. 
\end{exm}

\begin{exm}[Rings with anti-involution]
Let $R$ be a discrete ring with anti-involution $\tau \co R^{\op}\to R$, regarded as the action of the generator of $D_2$. Then there is an associated Mackey functor $\mM$ with $\mM(D_2/e)=R$ and $\mM(D_2/D_2)=R^{C_2}$. The restriction map $\res_e^{D_2}$ is the inclusion of fixed points, the transfer $\tr_e^{D_2}$ is the map $1+\tau$, and the Weyl group action of $D_2$ on $\mM(D_2/e)=R$ is defined on the generator of $D_2$  by the anti-involution $\tau\co R^{\op}\longrightarrow R$. Since $R^{\op}\to R$ is a ring map, there is an element $1\in R^{C_2}$ that restricts to the multiplicative unit in $ 1\in R$. This specifies a discrete $E_{\sigma}$-ring structure on the Mackey functor $\mM$. 
\end{exm}

\subsection{Real Hochschild homology of discrete \texorpdfstring{$E_{\sigma}$}{Esigma}-rings}\label{sec:HR}
In this section, we define the Real Hochschild homology $\HR_*^{D_{2m}}(\m{M})$ of a discrete $E_{\sigma}$-ring $\m{M}$, which takes values in graded $D_{2m}$-Mackey functors. 
We first need to specify a right $N_e^{D_{2m}}\iota_e^*\m{M}$-action on  $N_{D_2}^{D_{2m}}\m{M}$, and a left $N_e^{D_{2m}}\iota_e^*\m{M}$-action on  $N_{\zeta D_2 \zeta^{-1}}^{D_{2m}}c_{\zeta}\m{M}$. Here $c_{\zeta}$ is the symmetric monoidal equivalence of categories 
\[ c_{\zeta} \colon \thinspace \Mak_{D_2}\to \Mak_{\zeta D_2 \zeta^{-1}}.\]
For $\m{M}$ a discrete $E_{\sigma}$-ring, $\iota_e^*\m{M}$ is an (associative unital) ring  so $N_e^{D_{2m}}\iota_e^*\m{M}$ is an associative Green functor. By Lemma~\ref{Esigmastructure}, there is a left action
\[ \overline{\psi}_L \co N_e^{D_2}\iota^*_e\m{M} \square \m{M} \to  \m{M} \]
and a right action
\[ \overline{\psi}_R \co \m{M}  \square N_e^{D_2}\iota^*_e\m{M} \to  \m{M}.\]

\begin{defin}\label{def:twistedMackeymodule}
We define a right $N_e^{D_{2m}}\iota_e^*\m{M}$-module structure on $N_{D_2}^{D_{2m}}\m{M}$ as the composite
\[
	\xymatrix{
		 \psi_R\co  N_{D_2}^{D_{2m}}\m{M} \square N_{e}^{D_{2m}}\iota_e^*\m{M} \ar[r]^-{\cong} & N_{D_2}^{D_{2m}}( \m{M}\square N_{e}^{D_2}\iota_e^*\m{M} )   \ar[rr]^(.6){N_{D_2}^{D_{2m}}(\overline{\psi}_R)} && N_{D_2}^{D_{2m}}\m{M}.
		 } 
\]
  We define a left $N_e^{D_{2m}}\iota_e^*\m{M}$-module structure $\psi_L$ on  $N_{\zeta D_2 \zeta^{-1}}^{D_{2m}}c_{\zeta}\m{M}$
as the composite of the map
\begin{align*}
\label{twisted left module}	
	\xymatrix{
		 N_{e}^{D_{2m}}\iota_e^*\m{M}\square N_{\zeta D_2 \zeta^{-1}}^{D_{2m}}c_{\zeta}\m{M} \ar[r]^-{\cong} & N_{\zeta D_2 \zeta^{-1}}^{D_{2m}} ( N_e^{\zeta D_2\zeta^{-1}}\iota_e^*\m{M}\square c_{\zeta}\m{M})
		 }
\end{align*}
with the map 
\[ 
	\xymatrix{
		N_{\zeta D_2\zeta^{-1}}^{D_{2m}}(N_e^{\zeta D_2\zeta^{-1}}\iota_e^*\m{M} \square c_{\zeta}\m{M}) \ar[rrr]^(.6){N_{\zeta D_2\zeta^{-1}}^{D_{2m}}(c_{\zeta}(\overline{\psi}_L))}&&& N_{\zeta D_2 \zeta^{-1}}^{D_{2m}}c_{\zeta}\m{M}.
		}
\]
where $c_{\zeta}(\overline{\psi}_L)$ is the left action of $N_e^{\zeta D_2 \zeta^{-1}}\iota_e^*\m{M}$ on $c_{\zeta}\m{M}$ coming from the fact that $c_{\zeta}$ is symmetric monoidal and therefore sends $E_{\sigma}$-rings in $\Mak_{D_2}$ to $E_{\sigma}$-rings in $\Mak_{\zeta D_2 \zeta^{-1}}$.
\end{defin}
\begin{defin}
Given Mackey functors $\m{R}$, $\mM$, and $\mN$ where $\m{R}$ is an associative Green functor, $\mN$ is a right $\m{R}$-module and $\mN$ is a left $\m{R}$-module, we define the two-sided bar construction 
$B_{\bullet}(\mM,\m{R},\mN)$
with $k$-simplices 
\[ B_{k}(\mM,\m{R},\mN) = \mM \square \m{R}^{\square k} \square \mN \]
and the usual face and degeneracy maps. 
\end{defin}

\begin{defin}\label{def: HR}
The \emph{Real $D_{2m}$-Hochschild homology} of a discrete $E_{\sigma}$-ring $\underline{M}$ is defined to be the graded $D_{2m}$-Mackey functor
\[ \HR_*^{D_{2m}}(\m{M}) = H_*\left ( \HR_{\bullet}^{D_{2m}}(\m{M}) \right ). \]
where 
\[ \HR_{\bullet}^{D_{2m}}(\m{M}) =B_{\bullet}(N_{D_2}^{D_{2m}}\m{M},N_{e}^{D_{2m}}\iota_e^*\m{M}, N_{\zeta D_2 \zeta^{-1}}^{D_{2m}}c_{\zeta}\m{M} ).\]
\end{defin}
\begin{remark}
Given a right $N_e^{D_{2m}}i_e^*\underline{M}$-module $\underline{N}$, we can also define Real Hochschild homology with coefficients
\[ \underline{\mathrm{HR}}_0^{D_{2m}}(\underline{M};\underline{N})=H_*(B_{\bullet}(\underline{N},N_e^{D_{2m}}i_e^*\underline{M}, N_{\zeta D_2 \zeta^{-1}}^{D_{2m}}c_{\zeta}\m{M} ).
\]
\end{remark}
Recall that the homology of a simplicial Mackey functor is defined to be the homology of the associated normalized dg Mackey functor, as in~\cite{BGHL19}.
\begin{lem}\label{lem: HR0 of Tambara is Tambara}
If $\mM$ is a $D_2$-Tambara functor, then $\HR_{0}^{D_{2m}}(\m{M})$ is a $D_{2m}$-Tambara functor.
\end{lem}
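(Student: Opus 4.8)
The plan is to identify $\HR_0^{D_{2m}}(\m{M})$ with a relative box product of Tambara functors and to observe that such relative box products are again Tambara functors. By Definition \ref{def: HR}, $\HR_0^{D_{2m}}(\m{M})$ is $H_0$ of the simplicial $D_{2m}$-Mackey functor $B_\bullet\big(N_{D_2}^{D_{2m}}\m{M},\,N_e^{D_{2m}}\iota_e^*\m{M},\,N_{\zeta D_2\zeta^{-1}}^{D_{2m}}c_\zeta\m{M}\big)$, and for any two-sided bar construction $H_0$ is the reflexive coequalizer of $d_0,d_1\colon B_1\rightrightarrows B_0$ (with common section $s_0$); equivalently it is the relative box product $N_{D_2}^{D_{2m}}\m{M}\,\square_{N_e^{D_{2m}}\iota_e^*\m{M}}\,N_{\zeta D_2\zeta^{-1}}^{D_{2m}}c_\zeta\m{M}$ of $D_{2m}$-Green functors. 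So it suffices to prove that this relative box product underlies a $D_{2m}$-Tambara functor.

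First I would record that the three inputs are Tambara functors. Since $\m{M}$ is a $D_2$-Tambara functor, $\iota_e^*\m{M}$ is a commutative ring (a trivial-group Tambara functor) and $c_\zeta\m{M}$ is a $\zeta D_2\zeta^{-1}$-Tambara functor because $c_\zeta$ is a symmetric monoidal equivalence of categories. Applying the norm, which carries Tambara functors to Tambara functors (and is compatible with Definition \ref{def: norm in Mackey} via the fact that $\upi_0$ of a $G$-$E_\infty$-ring is a Tambara functor, as in work of Brun and Hill–Hopkins), shows that $N_{D_2}^{D_{2m}}\m{M}$, $N_e^{D_{2m}}\iota_e^*\m{M}$, and $N_{\zeta D_2\zeta^{-1}}^{D_{2m}}c_\zeta\m{M}$ are all $D_{2m}$-Tambara functors. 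The key point is then to upgrade the one-sided module structures of Definition \ref{def:twistedMackeymodule} to maps of Tambara functors. Since the norm $N_e^{D_2}$ is left adjoint to the restriction $\iota_e^*$ on Tambara functors, the identity of $\iota_e^*\m{M}$ is adjoint to a counit $\varepsilon\colon N_e^{D_2}\iota_e^*\m{M}\to\m{M}$ of $D_2$-Tambara functors, and for a genuine Tambara functor $\m{M}$ the bimodule action $\overline{\psi}_R$ of Lemma \ref{Esigmastructure} is restriction of scalars along $\varepsilon$: it factors as $\m{M}\square N_e^{D_2}\iota_e^*\m{M}\xrightarrow{\id\square\varepsilon}\m{M}\square\m{M}\xrightarrow{\nabla}\m{M}$ through the codiagonal. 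Applying $N_{D_2}^{D_{2m}}$, using that norms compose ($N_{D_2}^{D_{2m}}N_e^{D_2}\cong N_e^{D_{2m}}$) and that $N_{D_2}^{D_{2m}}$ is symmetric monoidal, produces a Tambara map $N_e^{D_{2m}}\iota_e^*\m{M}\to N_{D_2}^{D_{2m}}\m{M}$ inducing exactly the module structure $\psi_R$; the same argument with $c_\zeta\m{M}$ in place of $\m{M}$ handles $\psi_L$. Thus $N_{D_2}^{D_{2m}}\m{M}$ and $N_{\zeta D_2\zeta^{-1}}^{D_{2m}}c_\zeta\m{M}$ are algebras over $N_e^{D_{2m}}\iota_e^*\m{M}$ in $D_{2m}$-Tambara functors.

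Finally I would conclude by a formal argument. Box products of $D_{2m}$-Tambara functors are again $D_{2m}$-Tambara functors, and the face and degeneracy maps of $\HR_\bullet^{D_{2m}}(\m{M})$ — assembled from unit insertions, codiagonals, and the Tambara-algebra structure maps above — are maps of Tambara functors, so $\HR_\bullet^{D_{2m}}(\m{M})$ is a simplicial object in $\Tamb_{D_{2m}}$. Its $H_0$, a reflexive coequalizer, is created by the forgetful functor $\Tamb_{D_{2m}}\to\Mak_{D_{2m}}$, since this functor creates sifted colimits (in particular reflexive coequalizers). Equivalently, $\HR_0^{D_{2m}}(\m{M})$ is the pushout in $\Tamb_{D_{2m}}$ of the diagram $N_{D_2}^{D_{2m}}\m{M}\leftarrow N_e^{D_{2m}}\iota_e^*\m{M}\to N_{\zeta D_2\zeta^{-1}}^{D_{2m}}c_\zeta\m{M}$, and pushouts of Tambara functors exist and are computed on underlying Green/Mackey functors as relative box products. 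Either way, $\HR_0^{D_{2m}}(\m{M})$ is (the underlying Mackey functor of) a $D_{2m}$-Tambara functor.

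I expect the main obstacle to be the compatibility claim in the middle paragraph: verifying that, for a genuine $D_2$-Tambara functor $\m{M}$, the $N_e^{D_2}\iota_e^*\m{M}$-module structure built into its canonical discrete $E_\sigma$-ring structure (Lemma \ref{Esigmastructure}) is literally restriction of scalars along the adjunction counit $\varepsilon$, so that after applying $N_{D_2}^{D_{2m}}$ the maps $\psi_R,\psi_L$ become honest maps of Tambara functors rather than mere module maps. Once this unwinding is carried out, everything else is formal, relying only on the facts that norms preserve Tambara functors and that $\Tamb_{D_{2m}}$ has (and the forgetful functor creates) reflexive coequalizers.
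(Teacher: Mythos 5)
Your proof is correct and rests on exactly the same key fact the paper invokes: reflexive coequalizers in $\Tamb_{D_{2m}}$ are created by the forgetful functor to $\Mak_{D_{2m}}$ (the paper's terse proof simply cites Strickland for this and stops). You additionally spell out the step the paper leaves implicit — that the norms $N_{D_2}^{D_{2m}}\m{M}$, $N_e^{D_{2m}}\iota_e^*\m{M}$, $N_{\zeta D_2\zeta^{-1}}^{D_{2m}}c_\zeta\m{M}$ are Tambara functors and that the module structure maps $\psi_R,\psi_L$ of Definition~\ref{def:twistedMackeymodule} are genuine Tambara maps, by recognizing $\overline{\psi}_R$ (and $\overline{\psi}_L$) as restriction of scalars along the counit $N_e^{D_2}\iota_e^*\m{M}\to\m{M}$ of the norm–restriction adjunction on Tambara functors and then applying the symmetric monoidal functor $N_{D_2}^{D_{2m}}$. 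That intermediate verification is a worthwhile addition, but the overall route to the conclusion is the same as the paper's.
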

\begin{proof}
Reflexive coequalizers in the category of  Tambara functors are computed as the reflexive coequalizer of the underlying Mackey functors~\cite{Strickland12}. 
\end{proof} 

\begin{prop}\label{prop:HRBox}
There is an isomorphism of $D_{2m}$-Mackey functors 
\[ \HR_0^{D_{2m}}(\m{M}) \cong N_{D_2}^{D_{2m}}\m{M}\square_{N_e^{D_{2m}}\iota_e^*\m{M}}N_{\zeta D_2 \zeta^{-1}}^{D_{2m}}c_{\zeta}\m{M}.  \]
\end{prop}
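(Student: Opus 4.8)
The plan is to compute $H_0$ of the simplicial $D_{2m}$-Mackey functor $\HR^{D_{2m}}_{\bullet}(\m{M}) = B_{\bullet}(N_{D_2}^{D_{2m}}\m{M},\, N_e^{D_{2m}}\iota_e^*\m{M},\, N_{\zeta D_2\zeta^{-1}}^{D_{2m}}c_\zeta\m{M})$ directly, and identify it with the relative box product. By definition the $0$th homology of a simplicial Mackey functor is the cokernel of $d_0 - d_1 \colon B_1 \to B_0$, i.e. the coequalizer (in $\Mak_{D_{2m}}$) of the two face maps
\[
\xymatrix{
N_{D_2}^{D_{2m}}\m{M} \,\square\, N_e^{D_{2m}}\iota_e^*\m{M} \,\square\, N_{\zeta D_2\zeta^{-1}}^{D_{2m}}c_\zeta\m{M}
\ar@<0.6ex>[r] \ar@<-0.6ex>[r] &
N_{D_2}^{D_{2m}}\m{M} \,\square\, N_{\zeta D_2\zeta^{-1}}^{D_{2m}}c_\zeta\m{M}.
}
\]
Here the top map is $\psi_R \,\square\, \id$ using the right $N_e^{D_{2m}}\iota_e^*\m{M}$-module structure on $N_{D_2}^{D_{2m}}\m{M}$ from Definition \ref{def:twistedMackeymodule}, and the bottom map is $\id \,\square\, \psi_L$ using the left module structure on $N_{\zeta D_2\zeta^{-1}}^{D_{2m}}c_\zeta\m{M}$ from the same definition. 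But this coequalizer is exactly the definition of the relative box product $N_{D_2}^{D_{2m}}\m{M}\,\square_{N_e^{D_{2m}}\iota_e^*\m{M}}\,N_{\zeta D_2\zeta^{-1}}^{D_{2m}}c_\zeta\m{M}$, since the symmetric monoidal category $(\Mak_{D_{2m}}, \square)$ has all coequalizers and the relative tensor over a Green functor is computed as this coequalizer. Thus the statement amounts to checking that the two face maps $d_0, d_1$ of the bar construction agree with the two maps defining the relative box product.

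The verification proceeds in a few steps. First, recall the standard face maps of the two-sided bar construction $B_\bullet(\mM,\m{R},\mN)$: on $1$-simplices $d_0 = \psi_R^{\mM}\square\id_{\mN}$ multiplies the first two factors using the right $\m{R}$-action on $\mM$, and $d_1 = \id_{\mM}\square\,\psi_L^{\mN}$ multiplies the last two using the left $\m{R}$-action on $\mN$. Substituting $\mM = N_{D_2}^{D_{2m}}\m{M}$, $\m{R} = N_e^{D_{2m}}\iota_e^*\m{M}$, $\mN = N_{\zeta D_2\zeta^{-1}}^{D_{2m}}c_\zeta\m{M}$, and plugging in the module structures $\psi_R$, $\psi_L$ of Definition \ref{def:twistedMackeymodule}, we see that $d_0$ and $d_1$ are precisely the parallel pair whose coequalizer defines the relative box product. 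Second, invoke that $(\Mak_{D_{2m}}, \square)$ is a symmetric monoidal category with colimits for which $\square$ preserves colimits in each variable (this is standard for Mackey functors, using that the box product is a Day-convolution-type product; cf. the discussion around Proposition \ref{prop:smabox}), so the relative box product exists and is computed as this reflexive coequalizer. Third, conclude $H_0(\HR^{D_{2m}}_\bullet(\m{M})) \cong N_{D_2}^{D_{2m}}\m{M}\,\square_{N_e^{D_{2m}}\iota_e^*\m{M}}\,N_{\zeta D_2\zeta^{-1}}^{D_{2m}}c_\zeta\m{M}$ as $D_{2m}$-Mackey functors, noting the isomorphism is natural and compatible with the $D_{2m}$-Mackey structure because every map in sight is a map of $D_{2m}$-Mackey functors.

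The main obstacle, such as it is, is purely bookkeeping: one must be careful that the identifications $N_{D_2}^{D_{2m}}(\m{M}\square N_e^{D_2}\iota_e^*\m{M}) \cong N_{D_2}^{D_{2m}}\m{M}\square N_e^{D_{2m}}\iota_e^*\m{M}$ and the corresponding one for the left action (both used in Definition \ref{def:twistedMackeymodule} and coming from the fact that norms are symmetric monoidal and commute with the relevant induced smash products) are the \emph{same} isomorphisms used when unwinding the face maps of the bar construction, so that no stray twist is introduced. Since the norm functors in Mackey functors are symmetric monoidal (Definition \ref{def: norm in Mackey} and Proposition \ref{prop:smabox}) and $c_\zeta$ is a symmetric monoidal equivalence, these coherence checks are routine. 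There is no homological subtlety here because we only need $H_0$, which is insensitive to the normalization and is simply the coequalizer of the bottom two face maps.
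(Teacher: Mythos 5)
Your proof is correct and follows exactly the same route as the paper: both sides are identified with the coequalizer of $\psi_R\square\id$ and $\id\square\psi_L$, which is the paper's one-line argument spelled out in more detail.
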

\begin{proof}
Both sides are given by the coequalizer 
\begin{align*} 
	\xymatrix{ 
		N_{D_2}^{D_{2m}}\m{M} \square N_{\zeta D_2 \zeta^{-1}}^{D_{2m}}c_{\zeta}\mM &  \ar@<.5ex>[l]^-{\psi_R \square \id}\ar@<-.5ex>[l]_-{\id \square \psi_L} N_{D_2}^{D_{2m}}\m{M}\square N_e^{D_{2m}}\iota_e^*\m{M} \square N_{\zeta D_2 \zeta^{-1}}^{D_{2m}}c_{\zeta}\m{M}.
		}
\end{align*}
\end{proof}

\begin{defin}
We say that a Mackey functor $\mM$ is \emph{flat} if the derived functors of the functor $\mM\square-$ vanish. 
\end{defin}
\begin{prop}\label{prop HR}
For any discrete $E_{\sigma}$-ring $\m{M}$ that can be written as a filtered colimit of representable $D_2$-Mackey functors, there is an isomorphism 
\[ \HR_*^{D_{2m}}(\mM) =\m{\Tor}_*^{N_e^{D_{2m}}\iota_e^*\mM} (N_{D_2}^{D_{2m}}\mM  , N_{\zeta D_2 \zeta^{-1}}^{D_{2m}}c_{\zeta}\mM ). \]
\end{prop}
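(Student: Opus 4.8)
The plan is to identify $\HR_*^{D_{2m}}(\m{M})$ with the homology of a derived tensor (box) product, and then use flatness to recognize this as the Tor groups. The key input is Proposition~\ref{prop:HRBox}, which identifies $\HR_0^{D_{2m}}(\m{M})$ with the (underived) relative box product $N_{D_2}^{D_{2m}}\m{M}\square_{N_e^{D_{2m}}\iota_e^*\m{M}}N_{\zeta D_2\zeta^{-1}}^{D_{2m}}c_{\zeta}\m{M}$. More generally, $\HR_\bullet^{D_{2m}}(\m{M}) = B_\bullet(N_{D_2}^{D_{2m}}\m{M}, N_e^{D_{2m}}\iota_e^*\m{M}, N_{\zeta D_2\zeta^{-1}}^{D_{2m}}c_\zeta\m{M})$ is the standard two-sided bar construction, so its homology is by definition $\m{\Tor}$ computed via the bar resolution, \emph{provided} the bar construction computes the derived box product — i.e., provided it is a resolution by modules on which the relevant box product is exact.

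First I would recall that $N_e^{D_{2m}}\iota_e^*\m{M}$ is an associative Green functor (it is a norm of a ring), so the homological algebra of modules over it is available, and the two-sided bar construction $B_\bullet(\m{A}, \m{R}, \m{N})$ is the canonical simplicial resolution of $\m{N}$ by free $\m{R}$-modules (smashed with $\m{A}$ on the left). The homology of the associated normalized chain complex of Mackey functors is, tautologically, $\m{\Tor}_*^{\m{R}}(\m{A}, \m{N})$ \emph{once one knows} that the free $N_e^{D_{2m}}\iota_e^*\m{M}$-modules appearing in the bar resolution are acceptable for computing derived functors, which holds because free modules over a Green functor are flat in the sense of Definition~\ref{def: HR} (their box product is exact). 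Here the hypothesis that $\m{M}$ is a filtered colimit of representable $D_2$-Mackey functors enters: by Lemma~\ref{norm of sifted colimit} the norms $N_{D_2}^{D_{2m}}\m{M}$ and $N_{\zeta D_2\zeta^{-1}}^{D_{2m}}c_\zeta\m{M}$ are filtered colimits of norms of representables, hence filtered colimits of flat (in fact projective/representable) Mackey functors, so the bar complex $B_\bullet(N_{D_2}^{D_{2m}}\m{M}, N_e^{D_{2m}}\iota_e^*\m{M}, N_{\zeta D_2\zeta^{-1}}^{D_{2m}}c_\zeta\m{M})$ is a flat resolution and its homology computes $\m{\Tor}$. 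Since filtered colimits are exact and commute with $\m{\Tor}$ and with homology, one can reduce to the representable case if desired.

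Concretely the steps are: (1) record that $N_e^{D_{2m}}\iota_e^*\m{M}$ is an associative Green functor and that the bar construction $\HR_\bullet^{D_{2m}}(\m{M})$ is the standard bar resolution computing the relative box product; (2) invoke the flatness hypothesis on $\m{M}$ together with Lemma~\ref{norm of sifted colimit} to see that $N_{D_2}^{D_{2m}}\m{M}$ and $N_{\zeta D_2\zeta^{-1}}^{D_{2m}}c_\zeta\m{M}$ are filtered colimits of representable, hence flat, Mackey functors, so each level of the bar construction involves only flat modules and the higher derived functors of $\square$ vanish on them; (3) conclude that the homology of the normalized complex associated to $\HR_\bullet^{D_{2m}}(\m{M})$ is by definition $\m{\Tor}_*^{N_e^{D_{2m}}\iota_e^*\m{M}}(N_{D_2}^{D_{2m}}\m{M}, N_{\zeta D_2\zeta^{-1}}^{D_{2m}}c_\zeta\m{M})$, using Proposition~\ref{prop:HRBox} to pin down the degree-zero identification. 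The main obstacle — more a technical point than a conceptual one — is verifying that the two-sided bar construction over a Green functor genuinely computes the relative derived box product in the category of Mackey functors: one must check that free modules over a Green functor are flat and that filtered colimits of representables behave well, which amounts to assembling standard facts about the homological algebra of Green/Mackey functors (as developed in \cite{BGHL19}) rather than proving anything new.
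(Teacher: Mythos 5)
Your proof is correct and follows essentially the same approach as the paper: norms of representables are representable (cf.\ \cite[Prop.~3.7]{BGHL19}), norms commute with filtered colimits by Lemma~\ref{norm of sifted colimit}, filtered colimits of representables are flat, and so the bar complex computes $\m{\Tor}$. Two small corrections: the reference to Definition~\ref{def: HR} points to the definition of $\HR$ rather than to the (unlabeled) definition of flatness just before this proposition, and the passing claim that free modules over a Green functor are flat needs the qualifier that $\m{R}\square\m{N}$ is a flat $\m{R}$-module precisely when $\m{N}$ is a flat Mackey functor---which is exactly what the flatness of the norms (as filtered colimits of representables) supplies here.
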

\begin{proof}
Norms send representable $D_2$-Mackey functors to representable $D_{2m}$-Mackey functors by~\cite[Proposition 3.7]{BGHL19}. Norms also commute with sifted colimits. Filtered colimits of representable $D_{2m}$-Mackey functors are flat.
\end{proof}

\subsection{Comparison between Real Hochschild homology and THR}

We now show Real topological Hochschild homology and Real Hochschild homology are related by a linearization map which is an isomorphism in degree 0. 
\begin{thm}\label{thm:linearization}
For any $(-1)$-connected  $E_\sigma$-ring $A$, we have a natural homomorphism
\[
\m{\pi}_k^{D_{2m}} \THR(A) \longrightarrow  \HR_k^{D_{2m}}(\m{\pi}_0^{D_2}A). 
\]
which is an isomorphism when $k=0$. 
\end{thm}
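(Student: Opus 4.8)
Recall from Theorem~\ref{thm:doublecoset} and Proposition~\ref{charofdibar} that $\iota_{D_{2m}}^*\THR(A)$ is modeled by the realization of $B_\bullet(N_{D_2}^{D_{2m}}A, N_e^{D_{2m}}\iota_e^*A, N_{\zeta D_2\zeta^{-1}}^{D_{2m}}c_\zeta A)$, with $k$-simplices $N_{D_2}^{D_{2m}}A \wedge (N_e^{D_{2m}}\iota_e^*A)^{\wedge k} \wedge N_{\zeta D_2\zeta^{-1}}^{D_{2m}}c_\zeta A$. Meanwhile $\HR_\bullet^{D_{2m}}(\m{\pi}_0^{D_2}A)$ has $k$-simplices $N_{D_2}^{D_{2m}}\m{M} \,\square\, (N_e^{D_{2m}}\iota_e^*\m{M})^{\square k} \,\square\, N_{\zeta D_2\zeta^{-1}}^{D_{2m}}c_\zeta\m{M}$, where $\m{M}=\m{\pi}_0^{D_2}A$. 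First I would assemble, in each simplicial degree $k$, a natural map $\m{\pi}_0^{D_{2m}}$ of the spectrum-level $k$-simplices down to the Mackey-functor-level $k$-simplices. The key input is Proposition~\ref{prop:smabox}: for $(-1)$-connected cofibrant $G$-spectra, $\m{\pi}_0(X\wedge Y)\cong \m{\pi}_0 X \,\square\, \m{\pi}_0 Y$, together with the fact that norms commute with $\m{\pi}_0$ by construction (Definition~\ref{def: norm in Mackey}: $N_H^G\m{M}=\m{\pi}_0^G N_H^G H\m{M}$) — so $\m{\pi}_0^{D_{2m}} N_{D_2}^{D_{2m}}A \cong N_{D_2}^{D_{2m}}\m{\pi}_0^{D_2}A$, and similarly for the other norm factors. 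Since $A$ is $(-1)$-connected, so are all the smash/norm factors, and the lax monoidality of $\m{\pi}_0$ gives a comparison map from $\m{\pi}_0$ of a smash product to the box product of the $\m{\pi}_0$'s, which is an isomorphism here by Proposition~\ref{prop:smabox}.

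\textbf{Second, I would check this levelwise map is simplicial}, i.e. compatible with the face and degeneracy maps of the two bar constructions. The face maps on the THR side are induced by the multiplication maps $N_e^{D_{2m}}\iota_e^*A \wedge (\text{adjacent factor}) \to (\text{adjacent factor})$ coming from the bimodule structures of Definition~\ref{def:twistedmodule}, and on the $\HR$ side by exactly the corresponding box-product module structure maps of Definition~\ref{def:twistedMackeymodule}; since $\m{\pi}_0^{D_{2m}}$ is lax monoidal and sends the spectrum-level module structure maps to the Mackey-functor module structure maps (this is essentially the content of Example~\ref{exm: pi0 of Esigma alg} and the naturality of the norm), the square commutes. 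The degeneracies are unit insertions and are handled the same way. This yields a map of simplicial $D_{2m}$-Mackey functors $\m{\pi}_0^{D_{2m}}\THR_\bullet(A) \to \HR_\bullet^{D_{2m}}(\m{M})$ — strictly speaking one works with $\m{\pi}_0$ applied to a Reedy-cofibrant model of the simplicial spectrum, using the flatness/very-well-pointedness framework of Section~\ref{sec: model structure} and \cite{DMPPR21} so that realization commutes with the equivalences.

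\textbf{Third, I would pass to homotopy.} There is a spectral sequence (or just the edge map) from the realization: $\m{\pi}_k$ of the realization of a Reedy-cofibrant simplicial $(-1)$-connected spectrum receives a map from the $k$-th homotopy of the associated chain complex of $\m{\pi}_0$'s of the simplices — concretely, the Bousfield–Kan spectral sequence $E^2_{s,t}=H_s(\m{\pi}_t \THR_\bullet(A)) \Rightarrow \m{\pi}_{s+t}^{D_{2m}}\THR(A)$ has an edge homomorphism $\m{\pi}_k^{D_{2m}}\THR(A) \to H_k(\m{\pi}_0^{D_{2m}}\THR_\bullet(A))$, and composing with the homology of the levelwise map above lands in $H_k(\HR_\bullet^{D_{2m}}(\m{M})) = \HR_k^{D_{2m}}(\m{M})$. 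This is the desired natural homomorphism. For $k=0$: the edge map $\m{\pi}_0^{D_{2m}}\THR(A) \to H_0(\m{\pi}_0^{D_{2m}}\THR_\bullet(A)) = \mathrm{coker}(\m{\pi}_0\THR_1 \rightrightarrows \m{\pi}_0\THR_0)$ is an isomorphism because $\m{\pi}_0$ of a realization of a $(-1)$-connected Reedy-cofibrant simplicial spectrum is the coequalizer of the $\m{\pi}_0$'s of the $0$- and $1$-simplices (right-exactness of $\m{\pi}_0$ and the skeletal filtration); and the levelwise map is an isomorphism in each degree by Proposition~\ref{prop:smabox}, so it induces an isomorphism on the degree-zero homology, which by Proposition~\ref{prop:HRBox} is exactly $N_{D_2}^{D_{2m}}\m{M} \,\square_{N_e^{D_{2m}}\iota_e^*\m{M}}\, N_{\zeta D_2\zeta^{-1}}^{D_{2m}}c_\zeta\m{M} = \HR_0^{D_{2m}}(\m{M})$.

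\textbf{The main obstacle} will be the careful bookkeeping in the first two steps: ensuring that the norm factors are genuinely $(-1)$-connected and cofibrant so that Proposition~\ref{prop:smabox} applies on the nose (norms of connective spectra are connective, but one must be in a point-set model where the relevant smash products compute the derived smash — this is why flatness of $A$ and the Reedy-cofibrancy machinery of \cite{DMPPR21,BDS18} are invoked), and verifying that the lax-monoidal comparison map for $\m{\pi}_0$ is compatible with \emph{all} the twisted module structure maps of Definitions~\ref{def:twistedmodule} and \ref{def:twistedMackeymodule} simultaneously, including the $c_\zeta$-twist and the braidings hidden in those definitions. Once the levelwise identification of simplicial objects is in hand, the homotopical conclusion (including the degree-zero isomorphism) is formal from the Bousfield–Kan spectral sequence and right-exactness of $\m{\pi}_0$.
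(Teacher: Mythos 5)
Your proposal is correct and follows essentially the same route as the paper: both identify $\iota_{D_{2m}}^*\THR(A)$ with the two-sided bar construction via Proposition \ref{charofdibar}, invoke the skeletal-filtration spectral sequence of \cite{EKMM97} and its edge homomorphism, and identify the $E^2_{*,0}$-row with the $\HR$-complex using the degree-$0$ collapse of the K\"unneth spectral sequence (your Proposition \ref{prop:smabox}) together with the fact that norms commute with $\m{\pi}_0$. The degree-zero isomorphism is then the first-quadrant observation $E^2_{0,0}\cong E^\infty_{0,0}\cong\m{\pi}_0^{D_{2m}}\THR(A)$, which you phrase equivalently via right-exactness of $\m{\pi}_0$ and Proposition \ref{prop:HRBox}.
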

\begin{proof}
By~\cite[X.2.9]{EKMM97}, there is a spectral sequence
\[
E^2_{p,q} = H_p(\pi_q(X_{\bullet})) \implies \pi_{p+q}(|X_{\bullet}|), 
\]
from filtering by skeleta.  
The same proof yields an equivariant version of this spectral sequence. By Proposition~\ref{charofdibar}, there is a weak equivalence 
\[ \iota_{D_{2m}}^*\THR(A) \simeq |B_{\bullet} (N_{D_2}^{D_{2m}}A,N_{e}^{D_{2m}}\iota_e^*A,N_{\zeta D_2 \zeta^{-1}}^{D_{2m}}c_{\zeta}A)|,
\]
so the spectral sequence in this case will be of the form:
\[
E^2_{p,q} = H_p(\m{\pi}_q^{D_{2m}}( N_{D_2}^{D_{2m}}A \wedge N_{e}^{D_{2m}}\iota_e^*A^{\wedge \bullet} \wedge N_{\zeta D_2 \zeta^{-1}}^{D_{2m}}c_{\zeta}A)) \Rightarrow \m{\pi}^{D_{2m}}_{p+q}(\THR(A)), 
\]
The edge homomorphism of this spectral sequence is a map
\[
\m{\pi}^{D_{2m}}_{p}(\THR(A)) \to H_p(\m{\pi}_0^{D_{2m}}( N_{D_2}^{D_{2m}}A\wedge N_{e}^{D_{2m}}\iota_e^*A^{\wedge \bullet} \wedge N_{\zeta D_2\zeta^{-1}}^{D_{2m}}c_{\zeta}A)). 
\]
We can identify the right hand side as
\begin{align}\label{rhs of above}
	H_p\left(\m{\pi}^{D_{2m}}_0N_{D_2}^{D_{2m}}A \square (\m{\pi}^{D_{2m}}_0N_{e}^{D_{2m}}\iota_e^*A)^{\square \bullet} \square  \m{\pi}^{D_{2m}}_0(N_{\zeta D_2\zeta^{-1}}^{D_{2m}}c_{\zeta}A) \right), 
\end{align}
using the collapse of the K\"unneth spectral sequence~\cite{LM06} in degree 0. By Definition~\ref{def: norm in Mackey}, 
there are isomorphisms of $D_{2m}$-Mackey functors 
\begin{align*}
\upi_0^{D_{2m}}(N_e^{D_{2m}}\iota_e^*A)\cong & N_e^{D_{2m}}\iota_e^*\upi_0^{D_2}(A),&\\
\upi_0^{D_{2m}}(N_{\zeta D_2 \zeta^{-1}}^{D_{2m}}c_{\zeta}A)\cong & N_{\zeta D_2 \zeta^{-1}}^{D_{2m}}c_{\zeta}\upi_0^{D_2}(A),\\
\upi_0^{D_{2m}}(N_{D_2}^{D_{2m}}A)\cong &N_{D_2}^{D_{2m}}\upi_0^{D_2}(A). &
\end{align*} 
We can therefore identify \eqref{rhs of above}
as 
\[
H_p\left( N_{D_2}^{D_{2m}}\m{\pi}_0^{D_2}A \square (N_{e}^{D_{2m}}\iota_e^*\m{\pi}_0^{D_2}A)^{\square \bullet} \square N_{\zeta D_2\zeta^{-1}}^{D_{2m}}c_{\zeta}\m{\pi}_0^{D_2}A   \right ) \,,
\]
and hence the edge homomorphism gives a linearization map
\[
\m{\pi}^{D_{2m}}_{p}(\THR(A))  \longrightarrow \HR_p^{D_{2m}}(\m{\pi}_0^{D_2}A) \,.
\]
To prove the claim that this map is an isomorphism in degree zero, we note that the only contribution to \(t+s=0\) is
\[ E_{0,0}^2\cong N_{D_2}^{D_{2m}}\underline{\pi}_0^{D_{2}}A\square_{N_e^{D_{2m}}\iota_e^*\underline{\pi}_0^{D_{2}}A} N_{\zeta D_2\zeta^{-1}}^{D_{2m}}c_{\zeta}\underline{\pi}_0^{D_{2}}A \] 
concentrated in degree $s=t=0$. By Proposition~\ref{prop:HRBox} this is $\HR_0^{D_{2m}}(\underline{\pi}_0^{D_{2}}A).$
Since this is a first quadrant spectral sequence, we observe that 
\[E_{0,0}^2\cong E_{0,0}^{\infty}\cong  \underline{\pi}_{0}^{D_{2m}}\THR(A).\]
\end{proof}
 \begin{remark}
 In~\cite{LewisC23}, Chloe Lewis constructs a B\"okstedt spectral sequence for Real topological Hochschild homology, which computes the equivariant homology of $\THR(A)$. The $E_2$-term of this spectral sequence is described by Real Hochschild homology, further justifying that $\HR$ is the algebraic analogue of THR.  
 \end{remark}

\section{Witt vectors of rings with anti-involution}\label{sec: witt}
Hesselholt--Madsen~\cite{HM97} proved that for a commutative ring $A$, 
\[
	\m{\pi}_0^{\mu_{p^n}}(\THH(A))(\mu_{p^n}/\mu_{p^n}) \cong \W_{n+1}(A; p). 
\]
This was extended to associative rings in~\cite{Hes97}. 
Recall that topological Hochschild homology is a cyclotomic spectrum, which yields restriction maps
\[
R_n\co \THH(A)^{\mu_{p^{n}}} \to \THH(A)^{\mu_{p^{n-1}}}.
\] 
One can then define
$
\TR(A;p) = \lim_{n,R_n} \THH(A)^{\mu_{p^n}},
$
and it follows from the Hesselholt--Madsen result above that 
\[
\pi_0 \TR(A;p) \cong \W(A;p) \,,
\]
where $\W(A; p)$ denotes the $p$-typical Witt vectors of $A$. This was then extended by Hesselholt to non-commutative Witt vectors. From~\cite[Theorem~A]{Hes97}, for an associative ring $A$ there is an isomorphism 
\[ \TC_{-1}(A;p)\cong \W(A;p)_F.\]
Here $W(A;p)$ denotes the non-commutative $p$-typical Witt vectors of $A$, and 
\[
\W(A;p)_F=\coker \left (1-F\co \W(A;p)\longrightarrow \W(A;p) \right ) \,,
\] 
where $F$ is the Frobenius map. Analogously, one would like to have a notion of (non-commutative) Witt vectors for discrete $E_{\sigma}$-rings, such that for an $E_{\sigma}$-ring $A$, $\m{\pi}_0^{D_{2m}}\THR(A)$ is closely related to the Witt vectors of $\m{\pi}_0^{D_2}A$. In this section, we define such a notion of Witt vectors.

Real topological Hochschild homology also has restriction maps 
\[
    R_n \co \THR(A)^{\mu_{p^n}} \to \THR(A)^{\mu_{p^{n-1}}}.
\]
We want to understand the algebraic analogue of these restriction maps, which requires defining a Real cyclotomic structure on Real Hochschild homology. To do this, we first recall from~\cite{BGHL19} the definition of geometric fixed points for Mackey functors. Let $G$ be a finite group and let $\mA$ be the Burnside Mackey functor for the group $G$. For $N$ a normal subgroup of $G$, let $\cF[N]$ denote the family of subgroups of $G$ such that $N\not\subset H$.
\begin{defin}
Fix a finite group $G$ and let $N\le G$ be a normal subgroup. Let $E\cF[N](\mA)$ be the subMackey functor of the Burnside Mackey functor $\mA$ for $G$ generated by $\mA(G/H)$ for all subgroups $H$ such that $H$ does not contain $N$. Then define 
\[ \widetilde{E}\cF[N](\mA)=\mA/(E\cF[N](\mA)).\]
If $\mM$ is a $G$-Mackey functor and $N$ is a normal subgroup of $G$, then 
\begin{align*} 
	E\cF[N](\mM):=\mM\square E\cF[N](\mA),&\text{ and }\\
	\widetilde{E}\cF[N](\mM):=\mM\square \widetilde{E}\cF[N](\mA).&
\end{align*}
More generally, if $\mM_{\bullet}$ is a dg-$G$-Mackey functor we define 
\begin{align*}
	(E\cF[N](\mM_{\bullet}))_n:=E\cF[N](\mM_n). 
\end{align*}
\end{defin}
Note that the $G$-Mackey functor $\widetilde{E}\cF[N](\mA)$ has the property that 
\[ 
	\widetilde{E}\cF[N](\mA)(G/H) = \begin{cases}
			0 & N\not\subset  H\\
			\mA((G/N)/(N/H)) & N\subset H \\
	\end{cases}
\]
which is desired for isotropy separation; i.e. there is an exact sequence 
\[  E\cF[N](\mA)\to \mA\to \widetilde{E}\cF[N](\mA)\ \]
which models the isotropy separation sequence. 

We now recall the definition of geometric fixed points for Mackey functors, as in~\cite{BGHL19}. Let $\mM$ be a $D_{2m}$-Mackey functor. By~\cite[Proposition~5.8]{BGHL19}, we know $\widetilde{E}\cF[\mu_d](\mM)$ is in the image of $\pi_{d}^*$, the pullback functor from $D_{2m}/\mu_d$-Mackey functors to $D_{2m}$-Mackey functors. Consequently, we may produce a $D_{2m}/\mu_d\cong D_{2m/d}$-Mackey functor 
$(\pi_{d}^*)^{-1}(\widetilde{E}\cF[\mu_d](\mM)).$
\begin{defin}[{Definition~5.10~\cite{BGHL19}}]
Let $\mM$ be a $D_{2m}$-Mackey functor. We define the  $D_{2m/d}$-Mackey functor of $\Phi^{\mu_d}$-geometric fixed points to be 
\[ \Phi^{\mu_d}(\mM):=(\pi_{d}^*)^{-1}(\widetilde{E}\cF[\mu_d]\mM).\]
\end{defin}

\noindent We now provide Real Hochschild homology with a Real cyclotomic structure.
\begin{prop}\label{cyclotomic}
Given $D_2\subset D_{2m}\subset O(2)$, $\mu_{d}$ a normal subgroup of $D_{2m}$ where $d \mid m$ and $\mM$ a discrete $E_{\sigma}$-ring, there is a natural isomorphism 
\[ \Phi^{\mu_{d}} ( \HR_\bullet^{D_{2m}}(\m{M} ) )\cong \HR_\bullet^{D_{2m/d}}(\m{M})\]
of simplicial $D_{2m/d}$-Mackey functors and consequently an isomorphism 
\[ \Phi^{\mu_{d}}\left (\HR_*^{D_{2m}}(\mM)\right )\cong \HR_*^{D_{2m/d}}(\mM)\]
of $D_{2m/d}$-Mackey functors. 
\end{prop}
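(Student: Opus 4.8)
The plan is to reduce everything to a simplicial-degreewise statement and then invoke the behavior of geometric fixed points on norms. First I would observe that $\Phi^{\mu_d}$, as defined for Mackey functors via $\widetilde{E}\cF[\mu_d]$, is (left) exact enough to commute with the formation of the normalized chain complex and with reflexive coequalizers; more precisely, $\Phi^{\mu_d}$ is symmetric monoidal (it is defined by boxing with $\widetilde{E}\cF[\mu_d](\mA)$ and then inverting $\pi_d^*$), so it commutes with the box product and with all colimits that appear in $B_\bullet$. Hence it suffices to produce a natural isomorphism of simplicial $D_{2m/d}$-Mackey functors
\[
\Phi^{\mu_d}\bigl(B_\bullet(N_{D_2}^{D_{2m}}\mM,\ N_e^{D_{2m}}\iota_e^*\mM,\ N_{\zeta D_2\zeta^{-1}}^{D_{2m}}c_\zeta\mM)\bigr)\ \cong\ B_\bullet(N_{D_2}^{D_{2m/d}}\mM,\ N_e^{D_{2m/d}}\iota_e^*\mM,\ N_{\zeta' D_2\zeta'^{-1}}^{D_{2m/d}}c_{\zeta'}\mM),
\]
where $\zeta'=\zeta_{2m/d}$, compatibly with the two-sided bar structure maps; passing to normalized chains and homology then gives the second isomorphism.

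Next I would establish the key input: for a subgroup $H\subset D_{2m}$ and a normal $\mu_d\trianglelefteq D_{2m}$, geometric fixed points of a norm satisfy a double coset / diagonal formula
\[
\Phi^{\mu_d}\bigl(N_H^{D_{2m}}\mN\bigr)\ \cong\ N_{\bar H}^{D_{2m/d}}\bigl(\Phi^{\mu_d\cap H}\mN\bigr)
\]
in the cases we need, where $\bar H$ is the image of $H$ in $D_{2m/d}$. This is the Mackey-functor analogue of the HHR formula $\Phi^N N_H^G X \simeq N_{\bar H}^{\bar G}\Phi^{N\cap H}X$ (their Proposition B.96), and it is available here because $\Phi^{\mu_d}$ on Mackey functors is defined precisely so as to match $\upi_0\Phi^{\mu_d}$ on Eilenberg--MacLane spectra (using the definition $N_H^G\mM=\upi_0^G N_H^G H\mM$ of Definition \ref{def: norm in Mackey}, together with the fact that $\Phi^{\mu_d}$ commutes with $\upi_0$ on $(-1)$-connected spectra and with $H(-)$). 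Applying this to the three norm factors: for $H=\{e\}$ one gets $\mu_d\cap H=\{e\}$ and $\Phi^{\mu_d}N_e^{D_{2m}}\iota_e^*\mM\cong N_e^{D_{2m/d}}\iota_e^*\mM$ since $\Phi^{\{e\}}=\mathrm{id}$; for $H=D_2$, since $\mu_d\cap D_2=\{e\}$ and the image of $D_2$ in $D_{2m/d}$ is again a reflection subgroup $D_2$, one gets $\Phi^{\mu_d}N_{D_2}^{D_{2m}}\mM\cong N_{D_2}^{D_{2m/d}}\mM$; and similarly $\mu_d\cap(\zeta D_2\zeta^{-1})=\{e\}$ with image $\zeta' D_2\zeta'^{-1}$, giving $\Phi^{\mu_d}N_{\zeta D_2\zeta^{-1}}^{D_{2m}}c_\zeta\mM\cong N_{\zeta' D_2\zeta'^{-1}}^{D_{2m/d}}c_{\zeta'}\mM$. (Here I would use $d\mid m$ so that these subgroups are genuinely disjoint from $\mu_d$ and the quotient group is again dihedral of the right order; this is also where Convention \ref{conv: ordered coset}'s uniform choice of $\zeta$ makes the identification $\zeta\mapsto\zeta'$ clean.)

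Then I would assemble: since $\Phi^{\mu_d}$ is monoidal it carries $B_k = N_{D_2}^{D_{2m}}\mM\square (N_e^{D_{2m}}\iota_e^*\mM)^{\square k}\square N_{\zeta D_2\zeta^{-1}}^{D_{2m}}c_\zeta\mM$ to the box product of the three images computed above, i.e.\ to $B_k(N_{D_2}^{D_{2m/d}}\mM, N_e^{D_{2m/d}}\iota_e^*\mM, N_{\zeta' D_2\zeta'^{-1}}^{D_{2m/d}}c_{\zeta'}\mM)$; one checks that the module structure maps $\psi_L,\psi_R$ of Definition \ref{def:twistedMackeymodule} are built from norms of the $E_\sigma$-module maps $\overline\psi_L,\overline\psi_R$ and from symmetric-monoidal structure isomorphisms, so $\Phi^{\mu_d}$ applied to them yields exactly the corresponding structure maps one level down; hence the face and degeneracy maps of the two simplicial Mackey functors match, giving the claimed simplicial isomorphism. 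Passing to normalized chains (which $\Phi^{\mu_d}$ respects, being exact on the relevant short exact sequences coming from $E\cF[\mu_d](\mA)\to\mA\to\widetilde E\cF[\mu_d](\mA)$) and taking homology yields the graded statement.

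The main obstacle I expect is verifying the geometric-fixed-points-of-a-norm formula $\Phi^{\mu_d}N_H^{D_{2m}}\cong N_{\bar H}^{D_{2m/d}}\Phi^{\mu_d\cap H}$ \emph{at the level of Mackey functors} with all naturality, rather than just for spectra — one must be careful that $\Phi^{\mu_d}$ in the sense of Definition 5.10 of \cite{BGHL19} (inverting $\pi_d^*$ on $\widetilde E\cF[\mu_d]\mM$) really does commute with $\upi_0$ and with the norm construction $\upi_0 N_H^G H(-)$ on $(-1)$-connected inputs, and that the double-coset bookkeeping (which subgroups of $D_{2m/d}$ arise, and that no nontrivial $\mu_d\cap H$ terms appear) is exactly as in the spectrum-level computation in the proof of Corollary \ref{geom fix points of norm}. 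Once that lemma is in hand the rest is formal.
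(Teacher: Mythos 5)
Your argument is correct and matches the paper's approach: apply $\Phi^{\mu_d}$ levelwise to the two-sided bar construction, use strong symmetric monoidality together with the interaction of geometric fixed points and norms, and then verify compatibility with the simplicial structure maps. The ``main obstacle'' you flag is in fact already a theorem at the level of Mackey functors (Theorem~5.15 of \cite{BGHL19}, which the paper cites directly, along with Proposition~5.13 for monoidality), so it need not be re-derived from spectra; and where you reason informally that the module structure maps are carried to one another, the paper makes the simplicial-compatibility check concrete by taking $\mu_d$-orbits of the isomorphism of ordered simplicial $D_{2m}$-sets from Lemma~\ref{lem: isomorphism of D2m sets} and Remark~\ref{rem:simplicialD2msets}.
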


\begin{proof}
We apply $\Phi^{\mu_{d}}$ level-wise to the bar construction
 \[
(\HR^{D_{2m}}(\mM))_{\bullet} =   B_{\bullet}(N_{D_2}^{D_{2m}}\mM,N_{e}^{D_{2m}}\iota_e^*\mM, N_{\zeta_{2m} D_2 \zeta_{2m}^{-1}}^{D_{2m}}c_{\zeta_{2m}}\mM).
\]
By~\cite[Proposition~5.13]{BGHL19},  
$\Phi^{\mu_{d}}$ is strong symmetric monoidal so 
\begin{equation}\label{iso 1}
	\Phi^{\mu_{d}} \left ( (\HR^{D_{2m}}(\mM))_k \right ) \cong  \Phi^{\mu_{d}} \left (N_{D_2}^{D_{2m}}\mM \right ) \square \left ( \Phi^{\mu_{d}} \left ( N_e^{D_{2m}}\iota_e^*\mM \right )  \right ) ^{\square k} \square \Phi^{\mu_{d}} \left ( N_{\zeta_{2m} D_2 \zeta_{2m}^{-1}}^{D_{2m}}c_{\zeta_{2m}} \mM  \right ).
\end{equation} 
 The interaction of the geometric fixed points and the norm is described in~\cite[Theorem~5.15]{BGHL19}. It follows that  
\begin{equation}\label{iso 2}	
	\Phi^{\mu_{d}} \left ( \HR^{D_{2m}}(\mM)_k \right )\cong  N_{D_2}^{D_{2m/d}}\mM \square \left ( N_e^{D_{2m/d}}\iota_e^*\mM \right ) ^{\square k} \square  N_{\zeta_{2m/d} D_2\zeta_{2m/d}^{-1}}^{D_{2m/d}}c_{\zeta_{2m/d}}\mM
\end{equation}
where we use the fact that 
$D_{2m}/\mu_d\cong D_{2m/d}$
by an isomorphism sending $\zeta_{m}$ to $\zeta_{m/d}$  and $\tau$ to $\tau$. Similarly, 
\[( \zeta_{2m}D_2\zeta_{2m}\cdot \mu_d)/\mu_{d}\cong \zeta_{2m/d}D_2\zeta_{2m/d}.\]
 since $\zeta_{2m} D_2 \zeta_{2m}^{-1}=\langle \zeta_m\tau \rangle$ and the isomorphism sends 
$\zeta_{m}\tau$ to $\zeta_{m/d}\tau$.  It therefore suffices to check that the simplicial structure maps commute with the isomorphisms \eqref{iso 1} and \eqref{iso 2}. 
Consider the isomorphism 
\[\mu_{2m(\bullet+1)} \longrightarrow D_{2m}/D_2\amalg D_{2m}^{\amalg \bullet } \amalg D_{2m}/\zeta_{2m} D_2\zeta_{2m}^{-1}\]
from Lemma~\ref{lem: isomorphism of D2m sets}. We observe that  
these maps form an isomorphism of simplicial $D_{2m}$-sets 
(cf. Remark~\ref{rem:simplicialD2msets}). Applying $\mu_d$-orbits, we have 
\begin{align}\label{eq: isomorphism of d2md mackey}
\mu_{2m(\bullet+1)/d} \longrightarrow D_{2m/d}/D_2\amalg D_{2m/d}^{\amalg \bullet } \amalg D_{2m/d}/\zeta_{2m/d} D_2\zeta_{2m/d}^{-1}.
\end{align}
 
If $\mM$ is the restriction of a $D_{2m}$-Mackey functor then the isomorphism is simply induced by tensoring with the isomorphism \eqref{eq: isomorphism of d2md mackey} of simplicial $D_{2m/d}$-sets. To see this, note that given a finite group $G$ with normal subgroup $N\le G$, the geometric fixed points $\Phi^{N}$ of the norm $N^T$ of a $G$-set $T$ is given by taking the norm $N^{T/N}$ of the orbits $T/N$. The more general statement also holds since we described the isomorphism level-wise and the compatibility with the face and degeneracy maps can be described on each box product factor indexing by the isomorphism of simplicial $D_{2m/d}$-sets \eqref{eq: isomorphism of d2md mackey} and using the compatibility of that isomorphism 
with the face and degeneracy maps.
\end{proof}

\begin{defin}\label{fixed points of Mackey functor}
Given a normal subgroup $N$ in $G$ we define a functor
\[  (-)^{N} \co \Mak_{G}\to \Mak_{G/N}\] 
as the composite $\upi_0^{G/N}\big((H(-))^{N}\big)$. 
\end{defin}
Given a $D_{2m}$-Mackey functor $\mM$, the $D_2$-Mackey functor $\mM^{\mu_m}$ is the data 
\[ 
	\begin{tikzcd} \mM(D_{2m}/D_{2m})  \ar[r, bend left=20]  & \ar[l, bend left=20] \mM(D_{2m}/\mu_m) 
	\end{tikzcd}
\]
regarded as a $D_2$-Mackey functor with the action of the Weyl group $W_{D_2}(e)=D_2$ given by the action of the Weyl group $W_{D_{2m}}(\mu_m)=D_{2m}/\mu_m\cong D_2$. 

\begin{remark}
    This Mackey ``fixed points'' functor is the functor denoted \(q_\ast\) in~\cite{HMQ}, where it was shown to preserve Green and Tambara functors. 
\end{remark}

These fixed points in Mackey functors relate to categorical fixed points. 
\begin{prop}
    For a \(G\)-spectrum \(E\) and for all integers \(k\), we have
    \[
        \underline{\pi}_k^{G/N}\big(E^N\big)\cong \big(\underline{\pi}_k^G(E)\big).
    \]
\end{prop}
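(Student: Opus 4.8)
The plan is to prove the isomorphism by evaluating both $G/N$-Mackey functors on every orbit of $G/N$ and checking that the values and the Mackey structure maps agree, naturally in $E$. First I would record the bookkeeping: a $G/N$-Mackey functor is determined by its values on the orbits $(G/N)/(K/N)$ for $N\le K\le G$ together with restriction, transfer, and conjugation maps, and under the correspondence $K\leftrightarrow K/N$ this data is precisely the data of a $G$-Mackey functor restricted to subgroups containing $N$. I would also recall that, by \cite{HMQ}, the Mackey-functor fixed-points functor $(-)^N=q_\ast$ of Definition~\ref{fixed points of Mackey functor} is exactly this ``forget down to the orbits $G/K$ with $K\supseteq N$'' operation, so $\bigl(\m{M}^N\bigr)\bigl((G/N)/(K/N)\bigr)=\m{M}(G/K)$ with the evident structure maps. (Here I interpret the right-hand side of the statement as $\bigl(\m{\pi}_k^G(E)\bigr)^N$.)

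The key geometric input is the standard compatibility of iterated genuine fixed points: after replacing $E$ by a fibrant model if necessary, for $N\le K\le G$ there is a natural equivalence of (non-equivariant) spectra $(E^N)^{K/N}\simeq E^K$, compatible with restrictions along inclusions, with transfers, and with conjugations \cite{MM02}. Applying $\pi_k$ then gives
\[
\m{\pi}_k^{G/N}(E^N)\bigl((G/N)/(K/N)\bigr)=\pi_k\bigl((E^N)^{K/N}\bigr)\cong\pi_k\bigl(E^K\bigr)=\pi_k^K(E),
\]
compatibly with all three types of structure map; this exhibits $\m{\pi}_k^{G/N}(E^N)$ as the $G/N$-Mackey functor with value $\m{\pi}_k^G(E)(G/K)$ at $(G/N)/(K/N)$ and with the structure maps of $\m{\pi}_k^G(E)$.

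For the other side I would unwind Definition~\ref{fixed points of Mackey functor}: $\bigl(\m{\pi}_k^G(E)\bigr)^N=\m{\pi}_0^{G/N}\bigl((H\m{\pi}_k^G E)^N\bigr)$, and evaluating at $(G/N)/(K/N)$, using again $(X^N)^{K/N}\simeq X^K$ and the fact that $H\m{M}$ has homotopy concentrated in degree $0$ (so $(H\m{M})^K$ does too, with $\pi_0=\m{\pi}_0^K(H\m{M})=\m{M}(G/K)$), gives $\bigl(\m{\pi}_k^G(E)\bigr)^N\bigl((G/N)/(K/N)\bigr)=\m{\pi}_k^G(E)(G/K)$. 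Comparing with the previous paragraph, both Mackey functors have the same values and the same restriction, transfer, and conjugation maps, and all identifications are functorial in $E$, so the isomorphism $\m{\pi}_k^{G/N}(E^N)\cong\bigl(\m{\pi}_k^G E\bigr)^N$ is natural. The only step that is not pure bookkeeping is the compatibility of iterated genuine fixed points with the Mackey structure, together with the identification of $q_\ast$ from \cite{HMQ}; I expect this to be the main — though quite minor — obstacle, since it is the one place we invoke a nontrivial external fact rather than unwinding definitions.
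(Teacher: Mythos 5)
The paper does not actually supply a proof of this proposition — it is stated without argument, so there is no ``paper's own proof'' to compare against; your job is effectively to fill a gap the authors chose not to. Your argument is correct and is the natural one. You correctly read the right-hand side as $(\underline{\pi}_k^G E)^N$ (the printed statement is missing the superscript $N$, which is the only reading that type-checks as a $G/N$-Mackey functor). The two inputs you isolate are exactly what is needed: (i) the standard iterated genuine fixed-point equivalence $(E^N)^{K/N}\simeq E^K$ for $N\le K\le G$ and a fibrant $E$, compatibly with restrictions, transfers, and Weyl/conjugation actions; and (ii) the observation that Definition~\ref{fixed points of Mackey functor} applied to $\m{M}=\underline{\pi}_k^G E$ unwinds, via $\pi_0\big((H\m{M})^K\big)\cong\m{M}(G/K)$, to the ``restrict to subgroups containing $N$'' picture that the paper's example ($D_{2m}$, $\mu_m$) illustrates. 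Matching values and structure maps orbit by orbit then gives the isomorphism, naturally in $E$. The one point worth spelling out slightly more carefully in a final write-up is the compatibility of the equivalence $(E^N)^{K/N}\simeq E^K$ with transfers, since this is where one genuinely invokes facts about the genuine equivariant stable category rather than pure bookkeeping; but you flag this yourself, and it is standard.
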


\begin{remark}
This gives an alternate characterization of the geometric fixed points  $\Phi^{\mu_d}\m{M}$ of a $D_{2m}$-Mackey functor $\mM$ for $d|m$ as 
\[ \Phi^{\mu_d}\m{M} \cong (\widetilde{E}\cF[\mu_d]\mM)^{\mu_d}\]
since it is clear in this case that there is a natural isomorphism
\[(\widetilde{E}\cF[\mu_d](\mM))^{\mu_d}\cong \left (\pi_{\mu_d}^*\right )^{-1}(\widetilde{E}\cF[\mu_d](\mM)).\] 
\end{remark}
\begin{construction}
Given a simplicial $D_{2p^k}$-Mackey functor $\mM_{\bullet}$ there is a natural map 
\[ \mM_{\bullet}\to \widetilde{E}\cF[\mu_p](\mM_{\bullet}) \] 
and then an induced natural map 
\[ \left (\left (\mM_{\bullet} \right )^{\mu_{p}}\right )^{\mu_{p^{k-1}}} \to \left (\left (\widetilde{E}\cF[\mu_p](\mM_{\bullet})\right )^{\mu_p} \right )^{\mu_{p^{k-1}}}_. \] 
Note that we can identify 
\[ \left (\left (\mM_{\bullet} \right )^{\mu_{p}}\right )^{\mu_{p^{k-1}}} =\left (\mM_{\bullet} \right )^{\mu_{p^k}}\]
by unraveling the definition. So, there is a natural transformation 
\[ R_k\co \left ( -\right )^{\mu_{p^k}} \longrightarrow (\Phi^{\mu_p}(-))_.^{\mu_{p^{k-1}}}\] 
\end{construction}
 We give this map the name $R_k$ because in our case of interest, where $\m{M}_{\bullet} = \HR_{\bullet}^{D_{2p^k}}\!(\m{\pi}_0^{D_2}A)$ for an $E_{\sigma}$-ring $A$, it is an algebraic analogue of the restriction map $R_k$ on $\THR(A)$. 
\begin{construction} 
If $\mM$ is a discrete $E_{\sigma}$-ring, it follows from Proposition~\ref{cyclotomic} that there is an isomorphism of $D_{2p^{k-1}}$-Mackey functors 
\[ \Phi^{\mu_{p}}\HR_n^{D_{2p^k}}\! (\mM ) \cong  \HR_n^{D_{2p^{k-1}}}\!(\mM). \]
The above construction therefore produces restriction maps 
\[ R_k \co  \left ( \HR_n^{D_{2p^k}}\! (\mM) \right )^{\mu_{p^{k}}} \to \left ( \HR_n^{D_{2p^{k-1}}}(\mM ) \right )^{\mu_{p^{k-1}}}, \]
which are maps of $D_2$-Mackey functors. The maps $R_k$ can be described explicitly on Lewis diagrams by 
\[
\begin{tikzcd}
   \HR_n^{D_{2p^k}} (\mM )(D_{2p^k}/D_{2p^k})  \ar[d, bend left=50]\ar[r,"R_k"] &  
   \HR_n^{D_{2p^{k-1}}} (\mM)(D_{2p^{k-1}}/D_{2p^{k-1}})
   \ar[d, bend left=50]\\
   	\HR_n^{D_{2p^k}} (\mM) (D_{2p^k}/\mu_{p^k})
   \ar[u, bend left=50] \ar[r,"R_k"] &  
   \HR_n^{D_{2p^{k-1}}}(\mM )(D_{2p^{k-1}}/\mu_{p^{k-1}}).
   \ar[u, bend left=50]
\end{tikzcd}
\]
\end{construction}

\begin{defin}\label{def: Witt vectors}
Given a discrete $E_{\sigma}$-ring $\mM$, we define the \emph{truncated $p$-typical Real Witt vectors} of $\mM$ by the formula
\[\mW_{k+1}(\mM;p)=\HR_0^{D_{2p^k}}\left (\mM  \right )^{\mu_{p^k}}\]
and the \emph{$p$-typical Real Witt vectors} of $\mM$ as
\[ \mW(\mM;p):= \underset{k,R_k}{\lim} \HR_0^{D_{2p^k}}\left (\mM  \right )^{\mu_{p^k}} \]
where the limit is computed in $D_2$-Mackey functors. This is functorial in $\mM$ by naturality of the restriction maps $R$ so we produce a functor
\[ \mW(-;p)\co  \Alg_{\sigma}(\Mak_{D_2})\longrightarrow \Mak_{D_2}.\]
\end{defin}
\begin{remark}
If $\mM$ is a $D_2$-Tambara functor, then by Lemma~\ref{lem: HR0 of Tambara is Tambara} and~\cite[Prop 5.16]{HMQ},  
$\HR_0^{D_{2p^n}}\left (\mM  \right )^{\mu_{p^n}}$ is 
a $D_{2}$-Tambara functor and we produce  
\[ \mW(-;p)\co  \Tamb_{D_2}\longrightarrow \Tamb_{D_2}.\]
\end{remark}
We now consider how the $p$-typical Real Witt vectors are related to Real topological Hochschild homology. Let $A$ be an $E_{\sigma}$-ring. Since the family $\cR$ 
does not contain $\mu_{p^k}$ and the family $\cF[\mu_p]$ does not contain $\mu_{p^k}$ for any $k\ge 1$, on $\mu_{p^k}$-fixed points, we do not need to distinguish between these two. 

Recall that Real topological restriction homology is defined as 
 \[ \TRR(A;p)=\underset{k,R_k}{\holim} \THR(A)^{\mu_{p^{k}}}\]
in the category $\Sp_{\cV}^{D_2}$ {\cite[Definition 3.6.]{Hog16}}.

\begin{thm}\label{witt thr}
Let $A$ be an $E_{\sigma}$-ring and $\underline{M}=\upi_0^{D_2}A$. There is an isomorphism of $D_2$-Mackey functors
\[\m{\pi}_0^{D_2}\TRR(A;p) \cong \mW(\m{M};p)\]
whenever $R^1\lim_{k}\m{\pi}_1^{D_2}\THR(A)^{\mu_{p^k}}=0.$
\end{thm}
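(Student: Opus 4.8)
The plan is to compute $\m{\pi}_0^{D_2}$ of the homotopy limit $\TRR(A;p)=\holim_{k,R_k}\THR(A)^{\mu_{p^k}}$ via the Milnor exact sequence, and then identify each term $\m{\pi}_0^{D_2}(\THR(A)^{\mu_{p^k}})$ with the truncated Real Witt vectors $\HR_0^{D_{2p^k}}(\m{M})^{\mu_{p^k}}$ compatibly with restriction maps. First I would recall that for a tower of $D_2$-spectra $\{X_k\}$ there is a short exact sequence of $D_2$-Mackey functors
\[
0\longrightarrow {\textstyle\lim_k}^1\,\m{\pi}_1^{D_2}X_k\longrightarrow \m{\pi}_0^{D_2}\holim_k X_k\longrightarrow {\textstyle\lim_k}\,\m{\pi}_0^{D_2}X_k\longrightarrow 0,
\]
obtained by applying the (levelwise) homotopy groups Mackey functor to the fiber sequence computing $\holim$, exactly as in the non-equivariant case. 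Under the hypothesis $R^1\lim_k \m{\pi}_0^{D_2}\THR(A)^{\mu_{p^k}}=0$ — which is the $\lim^1$ of the tower shifted by one, i.e. the term governing $\m{\pi}_1$ of the next stage if one is careful, but in any case is exactly the obstruction term the statement asks us to kill — the left-hand $\lim^1$ term vanishes and we get $\m{\pi}_0^{D_2}\TRR(A;p)\cong \lim_{k,R_k}\m{\pi}_0^{D_2}\THR(A)^{\mu_{p^k}}$.

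The heart of the argument is the identification, for each fixed $k$, of $\m{\pi}_0^{D_2}(\THR(A)^{\mu_{p^k}})$ with $\HR_0^{D_{2p^k}}(\m{M})^{\mu_{p^k}}$ as $D_2$-Mackey functors. Here I would use the Mackey fixed-points functor $(-)^{\mu_{p^k}}\co \Mak_{D_{2p^k}}\to\Mak_{D_2}$ of Definition \ref{fixed points of Mackey functor} together with the Proposition stating $\m{\pi}_k^{G/N}(E^N)\cong (\m{\pi}_k^G E)$ for a $G$-spectrum $E$; applied with $G=D_{2p^k}$, $N=\mu_{p^k}$, $E=\THR(A)$, this gives
\[
\m{\pi}_0^{D_2}\big(\THR(A)^{\mu_{p^k}}\big)\cong \big(\m{\pi}_0^{D_{2p^k}}\THR(A)\big)^{\mu_{p^k}}.
\]
Now $\m{\pi}_0^{D_{2p^k}}\THR(A)$ is identified with $\HR_0^{D_{2p^k}}(\m{M})$ by the $k=0$ case of Theorem \ref{thm:linearization} (the linearization map), applied with $D_{2m}=D_{2p^k}$ and $\m{M}=\m{\pi}_0^{D_2}A$. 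Combining, $\m{\pi}_0^{D_2}(\THR(A)^{\mu_{p^k}})\cong \HR_0^{D_{2p^k}}(\m{M})^{\mu_{p^k}}$, which is precisely $\mW_{k+1}(\m{M};p)$ by Definition \ref{def: Witt vectors}.

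The remaining task, and the main obstacle, is checking that these isomorphisms are compatible with the restriction maps on both sides, so that they assemble to an isomorphism of towers and hence of inverse limits: on the topological side the $R_k\co \THR(A)^{\mu_{p^k}}\to\THR(A)^{\mu_{p^{k-1}}}$ of \eqref{eq:restriction maps} come from the genuine Real $p$-cyclotomic structure, i.e. the equivalence $(\widetilde E\cR\wedge\THR(A))^{\mu_p}\simeq \THR(A)$; on the algebraic side the $R_k$ are the maps built from $\m{M}_\bullet\to\widetilde E\cF[\mu_p](\m{M}_\bullet)$ composed with the isomorphism $\Phi^{\mu_p}\HR_\bullet^{D_{2p^k}}(\m{M})\cong\HR_\bullet^{D_{2p^{k-1}}}(\m{M})$ of Proposition \ref{cyclotomic}. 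The point is that applying $\m{\pi}_0^{D_2}$ to the topological restriction and passing through the linearization isomorphism of Theorem \ref{thm:linearization} and the fixed-points–commutes-with-$\m{\pi}_0$ Proposition yields exactly the algebraic isotropy-separation construction $\m{M}\to\widetilde E\cF[\mu_p]\m{M}$ at the level of $\m{\pi}_0$; this is because the linearization map is natural and the edge homomorphism of the skeletal spectral sequence is compatible with the cyclotomic structure maps, which are themselves induced by maps of (Real cyclic) simplicial spectra whose $\m{\pi}_0$ is the corresponding map of simplicial Mackey functors appearing in Proposition \ref{cyclotomic}. Once this naturality square is verified, taking $\lim_k$ of the identification of towers, together with the vanishing of the $\lim^1$ term from the hypothesis, gives $\m{\pi}_0^{D_2}\TRR(A;p)\cong \lim_{k,R_k}\HR_0^{D_{2p^k}}(\m{M})^{\mu_{p^k}}=\mW(\m{M};p)$, as claimed. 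I expect the compatibility-of-restriction-maps bookkeeping — tracking the cyclotomic structure through the linearization spectral sequence — to be the genuinely delicate step, while the Milnor sequence and the per-level identification are formal given the earlier results.
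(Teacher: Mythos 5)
Your proposal takes essentially the same route as the paper's proof: identify $\m{\pi}_0^{D_2}\big(\THR(A)^{\mu_{p^k}}\big)$ with $\HR_0^{D_{2p^k}}(\m{M})^{\mu_{p^k}}$ via Theorem~\ref{thm:linearization} together with the proposition that categorical fixed points commute with $\m{\pi}_0$, check compatibility with the restriction maps $R_k$ via naturality of the edge homomorphism, and then pass to the inverse limit using the $\lim^1$ vanishing hypothesis. Your aside flagging a tension in the $\lim^1$ bookkeeping is a fair one, but note that it concerns the hypothesis as stated (in both your write-up and the paper the Milnor sequence for $\pi_0(\holim)$ would most naturally ask for vanishing of $\lim^1\m{\pi}_1^{D_2}$ rather than $\lim^1\m{\pi}_0^{D_2}$); this is not a deficiency you introduced, and it does not distinguish your argument from the paper's.
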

\begin{proof}
We note that there is an isomorphism
\begin{align*} 
	\upi_0^{D_{2p^k}}\THR(A)\cong \HR_0^{D_{2p^k}}(\mM) 
\end{align*}
of $D_{2p^k}$-Mackey functors by 
Theorem~\ref{thm:linearization} and consequently a natural isomorphism of $D_2$-Mackey functors 
\begin{align} \label{eq:iso with Witt}
	\upi_0^{D_2}(\THR(A)^{\mu_{p^k}})\cong (\HR_0^{D_{2p^k}}(\underline{M}))^{\mu_{p^k}}. 
\end{align}
 \noindent By construction, the diagram 
\[ 
	\xymatrix{ 
		\upi_0^{D_2}\THR(-)^{\mu_{p^k}} \ar[r]^{R_k} \ar[d]^{\cong} &  \upi_0^{D_2}\THR(-)^{\mu_{p^k-1}}  \ar[d]^{\cong} \\ 
		\left (\HR_0^{D_{2p^k}}( \upi_0^{D_2}(-) ) \right)^{\mu_{p^k}}  \ar[r]^-{R_k} &  \left (\HR_0^{D_{2p^{k-1}}} ( \upi_0^{D_2}(-)) \right )^{\mu_{p^{k-1}}}
		}
\]
commutes. To see this, we note that the edge homomorphism in the spectral sequence associated to the skeletal filtration of a simplicial spectrum (cf. Proof of~\ref{thm:linearization}) is compatible with the restriction maps $R_k$. 
Consequently, 
\begin{align*}
\m{\pi}_0^{D_2}\TRR(A;p) \cong & \lim_R \upi_0^{D_2}\THR(A)^{\mu_{p^k}} \\
\cong & \lim_R \left (\HR_0^{D_{2p^k}}(\upi_0^{D_2}A) \right )^{\mu_{p^k}}\\
= & \mW(\m{\pi}_0^{D_2}A;p)
\end{align*}
where the first isomorphism holds by our assumption that 
\[R^1\lim_{k}\upi_{1}^{D_2}\THR(A)^{\mu_{p^k}} =0.\]
\end{proof}
\begin{construction}\label{Frobenius}
We now define a Frobenius map 
\[F\colon \thinspace \mW(\mM;p)\to \mW(\mM;p).\]
The restriction maps 
$\text{res}_{\mu_{p^{k-1}}}^{\mu_{p^k}}$
and $\text{res}_{D_{2p^{k-1}}}^{D_{2p^k}}$  on the Mackey functor $\HR_0^{D_{2p^{k}}}\left (\mM  \right )$ induce a map of $D_{2}$-Mackey functors
\[
\begin{tikzcd}
    \left ( \HR_0^{D_{2p^{k}}}\left (\mM  \right )\right )^{\mu_{p^k}}(D_2/D_2) \ar[d, bend left=50]\ar[r] &  \left ( \HR_0^{D_{2p^{k}}}\left (\mM  \right )\right )^{\mu_{p^{k-1}}}(D_{2p}/D_{2})  \ar[d, bend left=50]\\
    \left ( \HR_0^{D_{2p^{k}}}\left (\mM  \right )\right )^{\mu_{p^k}}(D_2/e) \ar[u, bend left=50] \ar[r] &  \left ( \HR_0^{D_{2p^{k}}}\left (\mM  \right )\right )^{\mu_{p^{k-1}}}(D_{2p}/e)\ar[u, bend left=50]
\end{tikzcd}
\]
 that we call $F$. Observe that the target of this map can be identified with the $D_2$-Mackey functor  $\left (\HR_0^{D_{2p^{k-1}}}(\mM)\right )^{\mu_{p^{k-1}}}$, so we simply write 
\[ F_k\colon \thinspace   \left ( \HR_0^{D_{2p^{k}}}\left (\mM  \right )\right )^{\mu_{p^k}}\longrightarrow \left (\HR_0^{D_{2p^{k-1}}}(\mM)\right )^{\mu_{p^{k-1}}}\]
for this map. Note that this is natural so we can apply it to the map 
\[ \HR_0^{D_{2p^k}}(M)\longrightarrow \widetilde{E}\cF[\mu_p]\left ( \HR_0^{D_{2p^k}}(M)\right ).\]
Therefore, by construction and Proposition~\ref{cyclotomic}
this map is compatible with the restriction maps in the sense that there are commutative diagrams 
\[
\begin{tikzcd}
   \left ( \HR_0^{D_{2p^{k}}}\left (\mM  \right )\right )^{\mu_{p^k}} \ar[r,"F_k"] \ar[d, "R_k"] &  \left ( \HR_0^{D_{2p^{k-1}}}\left (\mM  \right )\right )^{\mu_{p^{k-1}}} \ar[d,"R_k"] \\
    \left ( \HR_0^{D_{2p^{k-1}}}\left (\mM  \right )\right )^{\mu_{p^{k-1}}} \ar[r,"F_k"] & \left (\HR_0^{D_{2p^{k-2}}}\left (\mM  \right )\right )^{\mu_{p^{k-2}}}
\end{tikzcd}
\]
Therefore, we have an induced map 
\[F_k\co \mW(\mM;p)\longrightarrow \mW(\mM;p).\]
\end{construction}
\begin{rem}\label{Vershiebung}
We can also define a Verschiebung operator 
\[ V\colon \thinspace \mW(\mM;p)\longrightarrow \mW(\mM;p)\]
in exactly the same way as in Construction~\ref{Frobenius} by replacing the restriction maps in the Mackey functor with the transfer maps in the Mackey functor. 
\end{rem}

There are also topological analogues of the maps $F$, $V$ and $R$ on $\THR(A)^{\mu_{p^k}}$ when $A$ is an $E_{\sigma}$-ring, which satisfy certain relations (cf.~\cite[\S 3]{Hog16}). In particular, $R_k$ and $F_k$ are compatible in the sense that $R_{k-1}\circ F_k=F_{k-1}\circ R_k$.
The cokernel of the map $\id_{\mW(A;p)}-F$ is defined to be the coinvariants $\mW(A;p)_F$. As a consequence, we have the following refinement of~\cite[Theorem A]{Hes97}. 
\begin{thm}\label{TCR thm}
Let $A$ be an $E_{\sigma}$-ring, and suppose that
\[
R^1\lim_{k}\m{\pi}_{1}^{D_2}\THR(A)^{\mu_{p^k}}=0.
\]
Then there is an isomorphism 
\[ \upi_{-1}\TCR(A;p)\cong \mW(\upi_0^{D_2}(A);p)_{F}.\]
\end{thm}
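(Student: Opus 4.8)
The plan is to mimic the classical argument of Hesselholt \cite{Hes97} identifying $\mathrm{TC}_{-1}$ with the Frobenius coinvariants of Witt vectors, but carried out in the category of $D_2$-Mackey functors using the Real cyclotomic structure on $\THR$ established above. First I would recall the definition of $\TCR(A;p)$ as the homotopy equalizer of $\can,\varphi\colon \TCR^{-}(A;p)\to \TPR(A;p)$, together with the equivalent description via the equalizer of $\id,F\colon \TRR(A;p)\to\TRR(A;p)$ from Definition \ref{Real TC} (these agree after $p$-completion, as noted after diagram \eqref{TCR NS}). Using the latter model, the homotopy equalizer gives a short exact (Milnor) sequence
\[
0\longrightarrow \underset{k}{\textstyle\lim}^{1}\,\upi_0^{D_2}\TRR(A;p)\longrightarrow \upi_{-1}\TCR(A;p)\longrightarrow \mathrm{coker}\bigl(\id-F\colon \upi_0^{D_2}\TRR(A;p)\to \upi_0^{D_2}\TRR(A;p)\bigr)\longrightarrow 0,
\]
where I must be careful to record that $\TRR$ and $\TCR$ are objects of $\Sp^{D_2}_{\cV}$, so that $\upi_0^{D_2}(-)$ makes sense as a $D_2$-Mackey functor and the long exact sequence of homotopy Mackey functors applies levelwise.

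Next I would invoke Theorem \ref{witt thr}: the hypothesis $R^1\lim_k \upi_0^{D_2}\THR(A)^{\mu_{p^k}}=0$ gives $\upi_0^{D_2}\TRR(A;p)\cong \mW(\upi_0^{D_2}A;p)$, and the same hypothesis kills the $\lim^1$ term in the Milnor sequence above (it is the derived limit of the same tower, and $\upi_0^{D_2}\TRR$ is itself a $\lim$, so one gets a short exact sequence relating the two $\lim^1$ terms which both vanish — strictly one should check the tower $\{\upi_0^{D_2}\TRR(A;p)/(\text{image under } R^j)\}$ but the cleanest route is to observe the bilim identification from Theorem \ref{witt thr} already absorbs this). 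Hence $\upi_{-1}\TCR(A;p)\cong \mathrm{coker}(\id-F)$. The final point is to identify the operator $F$ on $\upi_0^{D_2}\TRR(A;p)$ induced by the topological Frobenius with the algebraic operator $F\colon \mW(\upi_0^{D_2}A;p)\to\mW(\upi_0^{D_2}A;p)$ of Construction \ref{Frobenius}. This follows because the isomorphism \eqref{eq:iso with Witt} in the proof of Theorem \ref{witt thr}, which identifies $\upi_0^{D_2}(\THR(A)^{\mu_{p^k}})$ with $(\HR_0^{D_{2p^k}}(\upi_0^{D_2}A))^{\mu_{p^k}}$, is natural and realized by the edge homomorphism of the skeletal spectral sequence; the topological Frobenius $F_k$ is induced by inclusion of fixed points, which on $\upi_0^{D_2}$ corresponds exactly to the Mackey-functor restriction maps $\res^{\mu_{p^k}}_{\mu_{p^{k-1}}}$ and $\res^{D_{2p^k}}_{D_{2p^{k-1}}}$ used to define $F_k$ algebraically — so the two towers of operators agree, hence so do the maps $F$ on the inverse limits, hence $\mathrm{coker}(\id-F)\cong\mW(\upi_0^{D_2}A;p)_F$.

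The main obstacle I anticipate is the compatibility bookkeeping in the last step: verifying that the edge homomorphism of the (equivariant) skeletal spectral sequence is genuinely compatible with \emph{both} the restriction maps $R_k$ (already used in Theorem \ref{witt thr}) \emph{and} the Frobenius maps $F_k$, uniformly in $k$, so that passing to the limit over $R_k$ yields a well-defined $F$ on $\mW$ matching Construction \ref{Frobenius}. This requires knowing that $F_k$ on $\THR(A)^{\mu_{p^k}}$, induced by inclusion of $\mu_{p^{k-1}}$-fixed points into $\mu_{p^k}$-fixed points, acts on $\upi_0^{D_2}$ compatibly with the identification of $\upi_0^{D_{2p^k}}\THR(A)$ with $\HR_0^{D_{2p^k}}$ as a $D_{2p^k}$-Mackey functor (Theorem \ref{thm:linearization}) — i.e. that the linearization isomorphism is a map of Mackey functors, not merely of abelian groups, which is exactly what Theorem \ref{thm:linearization} provides. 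A secondary subtlety is making sure the $p$-completion in the comparison of the two models of $\TCR(A;p)$ does not affect $\upi_{-1}$; since we work with $\upi_0^{D_2}$ of connective-type data and take a single homotopy equalizer, the relevant Mackey functors are already $p$-complete in the situations of interest (e.g.\ $A=H\mZ$ after $p$-completion), so I would either add a standing $p$-completeness hypothesis or note that the statement is understood after $p$-completion, consistent with the conventions established around \eqref{TCR NS}.
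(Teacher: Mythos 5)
Your overall strategy agrees with the paper's: pass to the model $\TCR(A;p)=\mathrm{eq}\big(\id,F\colon \TRR(A;p)\rightrightarrows\TRR(A;p)\big)$ of Definition \ref{Real TC}, identify $\upi_0^{D_2}\TRR(A;p)\cong\mW(\upi_0^{D_2}A;p)$ via Theorem \ref{witt thr}, and match the topological Frobenius with the algebraic one of Construction \ref{Frobenius}. Your discussion of the last step is actually more careful than the paper (which dismisses the $F$-compatibility ``by inspection''), and the reasoning there is sound.

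However, the short exact sequence you wrote for $\upi_{-1}\TCR(A;p)$ is wrong. The homotopy equalizer of $\id$ and $F$ is the fiber of $\id-F$, so the long exact sequence of homotopy Mackey functors yields
\[
0 \longrightarrow \operatorname{coker}\big(\id-F\big|_{\upi_0^{D_2}\TRR}\big) \longrightarrow \upi_{-1}^{D_2}\TCR(A;p) \longrightarrow \ker\big(\id-F\big|_{\upi_{-1}^{D_2}\TRR}\big) \longrightarrow 0,
\]
so $\operatorname{coker}(\id-F)$ is the \emph{subobject}, not the quotient, and the quotient is $\ker(\id-F)$ in degree $-1$, not an $R^1\lim$. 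The hypothesis enters through the genuine Milnor sequence for the tower defining $\TRR$:
\[
0 \longrightarrow R^1\lim_k \upi_0^{D_2}\THR(A)^{\mu_{p^k}} \longrightarrow \upi_{-1}^{D_2}\TRR(A;p) \longrightarrow \lim_k \upi_{-1}^{D_2}\THR(A)^{\mu_{p^k}} \longrightarrow 0.
\]
The $R^1\lim$ term dies by hypothesis and the $\lim$ term dies by connectivity of $A$ (implicit via the reliance of Theorem \ref{witt thr} on Theorem \ref{thm:linearization}), giving $\upi_{-1}^{D_2}\TRR(A;p)=0$, after which the first sequence degenerates to the claimed isomorphism. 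Your version conflates these two exact sequences. Finally, your closing worry about $p$-completion is a red herring: the theorem and its proof use Definition \ref{Real TC} directly, so the comparison with the $(\operatorname{can},\varphi)$-equalizer model never arises.
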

\begin{proof}
We compute the homotopy fiber of the topological map $\id-F$ by the long exact sequence in homotopy groups. By Theorem~\ref{witt thr}, we can identify $\pi_0\TRR(A)$. By inspection, the topological map $F\colon \thinspace \TRR(A)\to \TRR(A)$
induces the algebraic map
$F\co \mW(\upi_0A;p)\longrightarrow \mW(\upi_0A;p)$
and therefore $\m{\pi}_{-1}\TCR(A;p)$ is the cokernel of the algebraic map $\id-F$.
\end{proof}

\section{Computations}\label{sec: computations}
In this section, we use the new algebraic framework from Section~\ref{sec:HR} to do some concrete calculations. In particular, we compute the $D_{2m}$-Mackey functor
$\underline{\pi}_0^{D_{2m}}\THR(H\mZ)$
where $m\geq 1$ is an odd integer and $\m{\mathbb{Z}}$ is the constant Mackey functor. This computation has essentially already appeared in~\cite[Theorem 3.15]{DMP22}, but we include it to illustrate how our approach can be done entirely in the setting of homological algebra for Mackey functors and is therefore, in a sense, algorithmic. The first step in this computation is a Tambara reciprocity formula for sums, which we present for a general finite group and may be of independent interest. 
\subsection{The Tambara reciprocity formulae} 
The most difficult relations in Tambara functors tend to be the interchange describing how to write the norm of a transfer as a transfer of norms of restrictions. These are described by the condition that if 
\[
\begin{tikzcd}
    {U}
        \ar[d, "g"]
        &
    {T}
        \ar[l, "h"']
        &
    {U\times_S \prod_g(T)}
        \ar[l, "{f'}"']
        \ar[d, "{g'}"]
        \\
    {S}
        &
        &
    {\prod_g(T)}
        \ar[ll,"{h'}"']
\end{tikzcd}
\]
is an exponential diagram, then we have
\[
    N_g\circ T_h=T_{h'}\circ N_{g'}\circ R_{f'}.
\]
The formulae called ``Tambara reciprocity'' unpack this in two basic cases: \(g: G/H\to G/K\) is a map of orbits and
\begin{enumerate}
    \item \(h\colon G/H\amalg G/H\to G/H\) is the fold map or
    \item \(h\colon G/J\to G/H\) is a map of orbits.
\end{enumerate}
These respectively describe universal formulae for
\[
    N_H^K(a+b)\text{ and } N_H^K tr_J^H(a).
\]
In general, these can be tricky to specify, since we have to understand the general form of the dependent product (or equivalently here, coinduction). 

\begin{lem}[{\cite[Proposition 2.3]{HillMaz19}}]
    If \(h\colon T\to G/H\) is a morphism of finite \(G\)-sets, with \(T_0=h^{-1}(eH)\) the corresponding finite \(H\)-set, and if \(g\colon G/H\to G/K\) is the quotient map corresponding to an inclusion \(H\subset K\), then we have an isomorphism of \(G\)-sets
    \[
        \prod_{g}(T)\cong G\times_K \Map^H(K,T_0).
    \]
\end{lem}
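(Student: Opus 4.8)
The plan is to prove this by unwinding the definition of the dependent product $\prod_g$ and computing a single fiber. First I would recall that $\prod_g$ along a map $g\colon A\to B$ of $G$-sets is the right adjoint to the pullback $g^*$, and that it has the standard concrete model whose underlying $G$-set consists of pairs $(b,s)$ with $b\in B$ and $s\colon g^{-1}(b)\to T$ a section of $h$ over the fiber $g^{-1}(b)$; the structure map to $B$ is $(b,s)\mapsto b$, and $G$ acts by $\gamma\cdot(b,s)=(\gamma b,\, a\mapsto \gamma\cdot s(\gamma^{-1}a))$. Since this is a map of $G$-sets landing in $B=G/K$, which is a single orbit with stabilizer $K$ at the point $eK$, it suffices to identify the fiber $\mathcal{F}$ over $eK$ together with its residual $K$-action; then $\prod_g(T)\cong G\times_K\mathcal{F}$ is automatic.

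Next I would identify $\mathcal{F}$. The fiber $g^{-1}(eK)$ is the $K$-subset $\{kH:k\in K\}\cong K/H$ of $G/H$, and over it $h$ restricts to a map of $K$-sets $h^{-1}(K/H)\to K/H$ whose fiber over $eH$ is by definition the $H$-set $T_0$. Since $h^{-1}(kH)$ is carried isomorphically onto $h^{-1}(eH)=T_0$ by translation by $k$, and $K$ permutes these fibers according to $K/H$, one obtains a $K$-equivariant identification $h^{-1}(K/H)\cong K\times_H T_0$ over $K/H$. Hence $\mathcal{F}$ is the set of all sections of $q\colon K\times_H T_0\to K/H$, with $K$ acting by conjugation, $(\kappa\cdot s)(x)=\kappa\cdot s(\kappa^{-1}x)$.

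The remaining, and technically fiddliest, step is to exhibit a $K$-equivariant bijection $\mathcal{F}\cong\Map^H(K,T_0)$. I would write a section in the form $s(kH)=[k,\sigma(k)]$ for a function $\sigma\colon K\to T_0$; well-definedness of $s$ with respect to the balanced-product relations forces $\sigma(kh)=h^{-1}\sigma(k)$ for $h\in H$, and then $f(k):=\sigma(k^{-1})$ defines an element of $\Map^H(K,T_0)$ (that is, $f(hk)=h\,f(k)$), with inverse $\sigma(k)=f(k^{-1})$. A short computation shows this assignment intertwines the conjugation $K$-action on sections with the right-translation $K$-action on $\Map^H(K,T_0)$. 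Combining the three steps yields $\prod_g(T)\cong G\times_K\Map^H(K,T_0)$.

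I expect the main obstacle to be purely bookkeeping: keeping the several group actions consistent — the left $H$-action on $K$ used to form the coinduced set $\Map^H(K,T_0)$, the balanced-product relations in $K\times_H T_0$, and the conjugation action on the set of sections — so that the bijection of the last step is genuinely $K$-equivariant rather than merely a bijection of underlying sets. Everything else is formal from the adjunction defining $\prod_g$; alternatively, one can simply cite \cite[Proposition 2.3]{HillMaz19}.
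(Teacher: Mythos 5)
The paper does not prove this lemma; it cites \cite[Proposition 2.3]{HillMaz19} and moves on, so there is no internal argument to compare against. Your direct proof via the sections model of the dependent product is correct: the reduction to a single fiber over $eK$ because $G/K$ is an orbit, the $K$-equivariant identification $h^{-1}(K/H)\cong K\times_H T_0$, and the bijection between sections $s(kH)=[k,\sigma(k)]$ of $q\colon K\times_H T_0\to K/H$ and maps $\sigma\colon K\to T_0$ satisfying $\sigma(kh)=h^{-1}\sigma(k)$ all check out, as does the computation that the conjugation action $(\kappa\cdot s)(x)=\kappa s(\kappa^{-1}x)$ corresponds to $\sigma'(k)=\sigma(\kappa^{-1}k)$, i.e.\ to right translation on $f(k)=\sigma(k^{-1})$. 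The one thing worth flagging is the convention mismatch you implicitly navigate: you normalize to $f(hk)=hf(k)$ via the inversion $f(k)=\sigma(k^{-1})$, whereas the paper's later use of $\Map^H(G,T_0)$ (see the evaluation map $(gH,F)\mapsto[g,F(g)]$ in the unpacked exponential diagram) implicitly uses $F(gh)=h^{-1}F(g)$, i.e.\ $F=\sigma$ directly — which is why $F(\gamma^{-1})$ appears in Theorem \ref{thm:TambRecipSums}. These two models of coinduction are identified by precomposition with inversion, so your proof is correct; just be aware that the lemma's intended $\Map^H$ is the paper's variant, and if you wanted to match that convention exactly you would drop the $k\mapsto k^{-1}$ and instead take $F=\sigma$, with the $K$-action given by left translation on the source.
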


This entire argument is induced up from \(K\) to \(G\), so it suffices to study the case \(K=G\). In this formulation, the map \(f'\) along which we restrict is the map
\[
G/H\times \Map^H(G,T_0)\to G\times_H T_0
\]
given by
\[
(gH,F)\mapsto \big[g,F(g)\big].
\]

Since the transfer along the fold map is the sum, we can understand the exponential diagram by further pulling back along the inclusions of orbits in \(\Map^H(G,T)\). Let \(F\in \Map^H(G,T_0)\) be an element, let \(G\cdot F\) be the orbit, and let \(K=\Stab(F)\) be the stabilizer. We can now unpack the orbit decomposition of \(G/H\times G/K\) and the maps to \(T\) and to \(G/K\cong G\cdot F\). We depict the exponential diagram, together with the pullback along the inclusion of the orbit \(G\cdot F\) and orbit decompositions of the relevant pieces in Figure~\ref{fig:ExtendedExponential}.

\begin{figure}[ht]
\includegraphics[angle=90, origin=c, height=.8\textwidth]{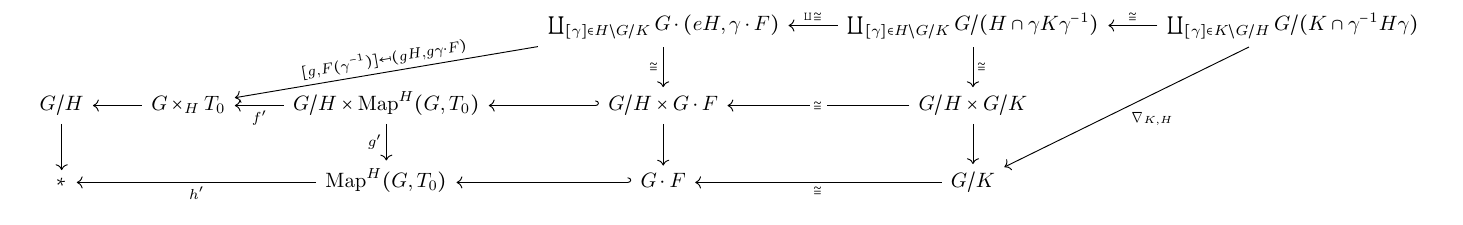}
\caption{Unpacking the exponential diagram on orbits}
\label{fig:ExtendedExponential}
\end{figure}

The map labeled \(\nabla_{K,H}\) is the coproduct of the canonical projection maps
\[
G/(K\cap\gamma^{-1}H\gamma)\to G/K,
\]
and the norm along this is, by definition
\[
N_{\nabla_{K,H}}=\prod_{[\gamma]\in K\backslash G/H} N_{K\cap \gamma^{-1}H\gamma}^{K}.
\]
Also by definition, the restriction along \(\prod c_{\gamma}\) is
\[
\prod_{[\gamma]\in H\backslash G/K} \underline{M}\big(G/(H\cap \gamma K\gamma^{-1})\big)
\xrightarrow{\prod_{[\gamma]\in K\backslash G/H} \gamma}
\prod_{[\gamma]\in K\backslash G/H} \underline{M}\big(G/(K\cap \gamma^{-1} H\gamma)\big)
\]
 for a Mackey or Tambara functor \(\underline{M}\), where \(\gamma\) here is the Weyl action. 
These give all the tools needed to understand the Tambara reciprocity formulae. We spell out the formula for a norm of a sum in general; we will not need the formula for the norm of a transfer here.

\begin{thm}\label{thm:TambRecipSums}
     Let \(G\) be a finite group and \(H\) a subgroup, and let \(\m{R}\) be a \(G\)-Tambara functor. For each \(F\in\Map^H\big(G,\{a,b\}\big)\), let \(K_F\) be the stabilizer of \(F\). Then for any \(a,b\in\m{R}(G/H)\), we have
  \[
    N_H^G(a+b)= \sum_{[F]\in\Map^H(G,\{a,b\})/G} tr_{K_F}^G\left(
    \prod_{[\gamma]\in K_F\backslash G\slash H}\!\! N_{K_F\cap (\gamma^{-1}H\gamma)}^{K_F}\Big(\gamma res_{H\cap (\gamma K\gamma^{-1})}^{H}\big(F(\gamma^{-1})\big)\Big)
    \right)
    \]
\end{thm}
\begin{proof}
    This follows immediately from the proceeding discussion. The only step to check is the identification of the restriction. This follows from the identification of the map \(f'\) with the evaluation map. In the case \(T_0=\{a,b\}\), where here we blur the distinction between \(a\) and \(b\) as elements of \(\m{R}(G/H)\) and as dummy variables, the map
    \[
    G/\big(H\cap (\gamma K\gamma^{-1})\big)\to G/H\times\{a,b\}
    \]
    coincides with the canonical quotient onto the summand specified by evaluating \(F\) at \(\gamma^{-1}\).
\end{proof}
Here we only need the Tambara reciprocity formulae for dihedral groups, so we now restrict attention to these cases.

\subsection{Formulae for dihedral groups}
We use the following two lemmas to describe the coinductions needed for the Tambara reciprocity formulae for
\[
    N_{D_{2m}}^{D_{2n}}(a+b)\text{ and }
    N_{D_{2m}}^{D_{2n}} tr_{\mu_m}^{D_{2m}}(a),
\]
where \(n/m\) is an odd prime.
\begin{lem}\label{unstable multiplicative double coset formula}
Let $H,K$ be subgroups of a finite group $G$ and $T$ be a $G$-set. Then there is a natural bijection 
\[ 	
\Map^H(G,T)^K\cong  \prod_{K \gamma  H\in K\backslash G \slash H}\Map^H(K\gamma H,T)^K\cong
\prod_{K \gamma  H\in K\backslash G \slash H}T^{(\gamma^{-1}K\gamma\cap H)}.
\]
When $T$ has trivial action this simplifies to
\[ 	
\Map^H(G,T)^K\cong \prod_{K \gamma H\in K\backslash G \slash H}T.
\]
\end{lem}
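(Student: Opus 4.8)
The plan is to decompose the coinduction $\Map^H(G,T)$ as a $K$-set according to the $(K,H)$-double cosets, and then identify each piece. First I would recall the standard orbit decomposition of $G$ as a $(K\times H)$-set: writing $G = \coprod_{[\gamma]\in K\backslash G/H} K\gamma H$, each double coset $K\gamma H$ is isomorphic, as a $(K,H)$-biset, to $K\times_{\gamma^{-1}K\gamma\cap H}\gamma H$ (equivalently $K/(K\cap\gamma H\gamma^{-1})$ with appropriate residual $H$-action on the right). Since an $H$-equivariant map out of a disjoint union is the product of its restrictions to the summands, we get
\[
\Map^H(G,T) \cong \prod_{[\gamma]\in K\backslash G/H}\Map^H(K\gamma H, T),
\]
and this is an isomorphism of $K$-sets where $K$ acts diagonally (it preserves each factor since left multiplication by $K$ preserves each double coset $K\gamma H$). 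Taking $K$-fixed points commutes with the product, giving the first displayed isomorphism.

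Next I would identify $\Map^H(K\gamma H, T)^K$. An $H$-equivariant map $K\gamma H\to T$ is determined by its value on the coset representative $\gamma$ (or $\gamma H$), subject to the constraint imposed by the stabilizer: the stabilizer of $\gamma H$ inside $K$ (acting on the left) is $K\cap \gamma H\gamma^{-1}$, conjugate to $\gamma^{-1}K\gamma\cap H$. A $K$-fixed such map must send $\gamma$ to an element of $T$ fixed by $\gamma^{-1}(K\cap \gamma H\gamma^{-1})\gamma = \gamma^{-1}K\gamma\cap H$ acting on $T$. Concretely: evaluation at $\gamma$ gives a bijection $\Map^H(K\gamma H,T)^K \xrightarrow{\ \cong\ } T^{(\gamma^{-1}K\gamma\cap H)}$, with inverse sending $t\in T^{(\gamma^{-1}K\gamma\cap H)}$ to the unique $H$-map extending $\gamma\mapsto t$ (well-definedness of this extension is exactly the fixed-point condition, and $K$-invariance of the extension follows because $K$ acts transitively on $K\gamma H/H$). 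This yields the second displayed isomorphism, and naturality in $T$ is clear since everything is defined via evaluation maps.

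Finally, the trivial-action case is immediate: if $G$ acts trivially on $T$ then $T^{(\gamma^{-1}K\gamma\cap H)} = T$ for every $\gamma$, so the product collapses to $\prod_{K\gamma H\in K\backslash G/H} T$. I expect the only genuinely delicate point to be bookkeeping the residual group actions correctly — making sure the subgroup appearing as the stabilizer is $\gamma^{-1}K\gamma\cap H$ rather than its conjugate $K\cap\gamma H\gamma^{-1}$, and checking that the evaluation-at-$\gamma$ map is well defined on $K$-fixed points (i.e.\ that the value lands in the asserted fixed subset) and is a bijection. This is routine coset arithmetic, so the lemma should follow without difficulty once the double coset decomposition is set up carefully.
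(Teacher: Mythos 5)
Your proof is correct and follows essentially the same route as the paper: decompose $G$ into $(K,H)$-double cosets, pull this through the coinduction, commute fixed points with the product, and then identify each factor as the fixed points of $T$ under $\gamma^{-1}K\gamma\cap H$. The only cosmetic difference is that the paper identifies each factor via $\Map^H(K\gamma H,T)^K\cong\Map^{K\times H}(K\gamma H,\pi_K^*T)$ and then factors through $K$-orbits using the classical double coset formula, whereas you do the equivalent computation directly by evaluating at $\gamma$.
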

\begin{proof}
Regarding $G$ as a $K\times H^{\op}$-space, there is an isomorphism 
\[ 
\iota_{K\times H^{\op}}^*G\cong \coprod_{\gamma}K\gamma H
\]
where $\gamma$ ranges over representatives for double cosets $K\gamma H$ in $K\backslash G \slash H$. The \(K\) fixed points of coinduction up from \(H\) to \(G\) are the same as the \(K\times H^{\op}\)-fixed points of just the set of maps out of \(G\). This gives a natural (in \(T\)) bijection
\[ 	
\Map^H(G,T)^K\cong \Map^H\left(\coprod_{K\backslash G \slash H} K\gamma H,T\right)^K \cong \prod_{K\backslash G \slash H}\Map^H(K\gamma H,T)^K
\]
as desired. 

To understand each individual factor, we use the quotient map \(\pi_K\colon K\times H\longrightarrow H\) to rewrite the fixed points:
\[
\big(\Map^H(K\gamma H,T)\big)^K\cong \Map^{K\times H}(K\gamma H,\pi_K^\ast T).
\]
Since by definition of the pullback, \(K\) acts trivially on \(T\), the map factors through the orbits \(K\backslash K\gamma H\), which is an \(H\)-orbit. The classical double coset formula identifies this with \(H/(H\cap \gamma^{-1}K\gamma)\), and the result follows.

The simplification follows from the action being trivial, and hence all points being fixed.
\end{proof}

\begin{lem}\label{lem: coinduced}
Let  $p$ be an odd prime. There are isomorphisms of $D_{2p}$-sets
\[
\Map^{D_{2}}(D_{2p},\{a,b\}) \cong \ast\amalg\ast\amalg \coprod_{i=1}^{2^{(p+1)/2}-2} D_{2p}/D_2 \amalg \coprod_{i=1}^{((2^{p-1}-1)/p)+1-2^{(p-1)/2}}D_{2p}.
\]
\end{lem}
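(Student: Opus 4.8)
The goal is to decompose the $D_{2p}$-set $\Map^{D_2}(D_{2p},\{a,b\})$ into orbits. I would proceed by first counting everything on underlying sets, then determining the orbit types by analyzing stabilizers. Since $[D_{2p}:D_2]=p$, the underlying set $\Map^{D_2}(D_{2p},\{a,b\})$ has cardinality $2^p$: an $H$-equivariant map from $G$ to a set with trivial $H$-action is the same as a function on the $p$-element set $D_2\backslash D_{2p}$. The $D_{2p}$-action permutes these $2^p$ functions, and I need to find the orbit decomposition. By Lemma~\ref{lem: coinduced} the claimed answer is $2$ fixed points (the constant functions $a$ and $b$), some number of orbits of type $D_{2p}/D_2$ (size $p$), and some number of free orbits (size $2p$); a quick arithmetic check that $2 + p(2^{(p+1)/2}-2) + 2p\big(((2^{p-1}-1)/p)+1-2^{(p-1)/2}\big) = 2^p$ confirms the counts are at least numerically consistent, and I would include this verification.

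The substantive step is identifying \emph{which} orbit types actually occur, i.e.\ showing the only stabilizers are $D_{2p}$, $D_2$ (up to conjugacy), and the trivial group. Here I would use the subgroup structure of $D_{2p}$ for $p$ an odd prime: the subgroups are $D_{2p}$ itself, the cyclic group $\mu_p$, the $p$ conjugate reflection subgroups (all conjugate to $D_2$), and the trivial group. So it suffices to rule out $\mu_p$ as a stabilizer: a function $F \in \Map^{D_2}(D_{2p},\{a,b\})$ fixed by $\mu_p$ must be constant on $\mu_p$-orbits of $D_2\backslash D_{2p}$, but $\mu_p$ acts transitively on the $p$ cosets $D_2\backslash D_{2p}$ (since $p$ is prime and $\mu_p \cap D_2 = e$), so $F$ would be globally constant — hence $F \in \{a,b\}$, whose full stabilizer is $D_{2p}$, not $\mu_p$. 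This shows no orbit is isomorphic to $D_{2p}/\mu_p$.

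Next I would count the $D_2$-fixed orbits precisely. By Lemma~\ref{unstable multiplicative double coset formula} applied with $G = D_{2p}$, $H = D_2$, $K = D_2$, and trivial action on $T = \{a,b\}$, we get
\[
\Map^{D_2}(D_{2p},\{a,b\})^{D_2} \cong \prod_{D_2\backslash D_{2p}/D_2} \{a,b\}.
\]
The double coset space $D_2\backslash D_{2p}/D_2$ has $(p+1)/2$ elements (one ``diagonal'' double coset $D_2 e D_2 = D_2$ and $(p-1)/2$ double cosets of size $2p/...$, a standard count for dihedral groups with $p$ odd). So the $D_2$-fixed set has $2^{(p+1)/2}$ elements; subtracting the two global constants leaves $2^{(p+1)/2}-2$ functions whose stabilizer is exactly a conjugate of $D_2$, giving $2^{(p+1)/2}-2$ orbits of type $D_{2p}/D_2$. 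The remaining $2^p - 2 - p(2^{(p+1)/2}-2)$ elements lie in free orbits, and dividing by $2p$ gives $(2^{p-1}-1)/p + 1 - 2^{(p-1)/2}$ free orbits after simplification. The main obstacle will be the bookkeeping in this last count — getting the double coset cardinality right and carefully tracking that every non-$D_2$-fixed, non-constant function is genuinely free (which follows from the stabilizer analysis above, since the only remaining possibility after excluding $\mu_p$ and $D_{2p}$ is the trivial group) — but it is routine given the structural input. I would also note that the integrality of $(2^{p-1}-1)/p$ is Fermat's little theorem, so the formula makes sense.
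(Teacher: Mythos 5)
Your proof is correct and follows essentially the same strategy as the paper: rule out $\mu_p$ (and hence every subgroup containing $\mu_p$, apart from the two constant maps $f_a,f_b$) as a stabilizer, count $D_2$-fixed points via $|D_2\backslash D_{2p}/D_2|=(p+1)/2$, and deduce the number of free orbits by cardinality. The extra details you supply — the explicit free-and-transitive action of $\mu_p$ on $D_2\backslash D_{2p}$, the arithmetic check that the orbit sizes sum to $2^p$, and the Fermat's-little-theorem remark on integrality of $(2^{p-1}-1)/p$ — are all correct and useful, but the underlying argument is the same as the paper's.
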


\begin{proof}
We observe that the only fixed points with respect to $\mu_{p}$ and any subgroup of $D_{2p}$ containing $\mu_{p}$ are the constant maps \(f_a\) and \(f_b\) sending $D_{2p}$ to $a$ or $D_{2p}$ to $b$, respectively. This gives the first two summands. The $D_2$-fixed points are given by a product of $(p+1)/2=|D_2\backslash D_{2p}\slash D_2|$ copies of $\{a,b\}$ with an additional copy corresponding to the constant maps. Combining this information, we have $2^{(p+1)/2}-2$ copies of $D_{2p}/D_2$ each contributing one $D_2$-fixed point. The remaining summands must be given by copies of the $D_{2p}$-set $D_{2p}$ and examining the cardinality the number of copies of $D_{2p}$ must be $((2^{p-1}-1)/p)+1-2^{(p-1)/2}$. 
\end{proof}

\begin{remark}
    There is a geometric interpretation of this. The \(D_{2p}\)-set \(\Map^{D_2}(D_{2p},\{a,b\})\) can be thought of as the ways to label the vertices of the regular \(p\)-gon with labels \(a\) or \(b\). The stabilizer of a function is the collection of those rigid motions which preserve the labeling. Those with stabilizer \(D_2\) are the ones that are symmetric with respect to the reflection through some fixed vertex and passing through the center. These then depend only on the label at the chosen vertex, and then \(\tfrac{p-1}{2}\) labels for the the next vertices, moving either clockwise or counterclockwise from that vertex. Of these, there are two that are special: the two where everything has a fixed label.
\end{remark}

\begin{notation}\label{not:Cardinalities Of Fixed Points}
    For an odd prime \(p\), let
    \[
        c_p=2^{\frac{p-1}{2}}-1\text{ and }
        d_p=\frac{2^{p-1}-1}{p}-c_p.
    \]
\end{notation}

\begin{lem}
Let $p$ be an odd prime. 
    We have an isomorphism of \(D_{2p}\)-sets
    \[
        \Map^{D_2}(D_{2p},D_2)\cong D_{2p}/\mu_p\amalg \coprod^{d_p+c_p} D_{2p}.
    \]
\end{lem}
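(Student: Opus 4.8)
I want to compute the $D_{2p}$-set $\Map^{D_2}(D_{2p}, D_2)$, where $D_2$ is regarded as a $D_2$-set via left multiplication (so $D_2 \cong D_2/e$ as a $D_2$-set). First I would fix the standard presentation $D_{2p} = \langle \tau, \zeta_p \mid \tau^2 = \zeta_p^p = (\tau\zeta_p)^2 = 1\rangle$ from Section~\ref{prelims}, and recall the general recipe from Lemma~\ref{unstable multiplicative double coset formula}: for a subgroup $K \subseteq D_{2p}$,
\[
\Map^{D_2}(D_{2p}, D_2)^K \cong \prod_{K\gamma D_2 \in K\backslash D_{2p}/D_2} (D_2)^{(\gamma^{-1}K\gamma \cap D_2)}.
\]
Since I only need to identify the orbit type, it suffices to compute the cardinalities of the fixed-point sets $\Map^{D_2}(D_{2p}, D_2)^K$ for $K$ running over the conjugacy classes of subgroups of $D_{2p}$: the cyclic subgroups $\mu_d$ for $d \mid p$ (so $d \in \{1, p\}$ since $p$ is prime), and the reflection subgroups (all conjugate to $D_2$ since $p$ is odd), and $D_{2p}$ itself.

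**Main steps.** (1) Compute the underlying cardinality: $|\Map^{D_2}(D_{2p}, D_2)| = |D_2|^{|D_2\backslash D_{2p}/D_2|} \cdot$(nothing extra)---more carefully, by the simplification in Lemma~\ref{unstable multiplicative double coset formula} with $K = e$, this is $|D_2|^{[D_{2p}:D_2]} = 2^p$. Wait---that is not right since $D_2$ does not have trivial action; instead I use $\Map^{D_2}(D_{2p},D_2) \cong \Map(D_2\backslash D_{2p}, D_2)$ as a set, which has cardinality $2^p$... let me recount: $D_2 \backslash D_{2p}$ has $p$ elements, and maps into a $2$-element set give $2^p$, but these are only the $D_2$-equivariant maps after imposing equivariance on the source---actually $\Map^{D_2}(D_{2p}, D_2) \cong \Map(e\backslash D_2\backslash D_{2p}, \ldots)$; the cleanest count is $|\Map^{D_2}(D_{2p},D_2)| = |D_2|^{|D_2\backslash D_{2p}|}$ is wrong. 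I would instead directly compute: a $D_2$-equivariant map $D_{2p}\to D_2$ is determined by a map of the coset space, and $\Map^{D_2}(G, S)\cong \Map(H\backslash G, S)$ when $S$ has trivial $H$-action, but here $D_2$ acts on itself freely, so $\Map^{D_2}(D_{2p},D_2)\cong \coprod$(orbits) with total cardinality to be determined by the double-coset formula. (2) Compute $K = \mu_p$-fixed points: $\gamma^{-1}\mu_p\gamma \cap D_2 = \mu_p \cap D_2 = e$ for every $\gamma$, and $\mu_p \backslash D_{2p}/D_2$ is a single double coset, so $\Map^{D_2}(D_{2p},D_2)^{\mu_p} \cong (D_2)^e = D_2$, which has $2$ elements; this forces exactly one copy of the orbit $D_{2p}/\mu_p$ (which contributes $|D_2| = 2$ to the $\mu_p$-fixed points since $(D_{2p}/\mu_p)$ as a $\mu_p$-fixed set is $D_{2p}/\mu_p$ with $p \cdot 2/p = 2$ points). (3) Compute $K = D_2$-fixed points: $D_2\backslash D_{2p}/D_2$ has $(p+1)/2$ elements when $p$ is odd; for the double coset containing $\gamma = e$ we get $(D_2)^{D_2} = \emptyset$ since $D_2$ acts freely on itself, contributing $0$; for the other $(p-1)/2$ double cosets, $\gamma^{-1}D_2\gamma \cap D_2 = e$, contributing $(D_2)^e = D_2$ with $2$ elements each; so $\Map^{D_2}(D_{2p},D_2)^{D_2}$ has $2^{(p-1)/2}$ elements. (4) From the orbit decomposition $\Map^{D_2}(D_{2p},D_2) \cong a\cdot D_{2p}/\mu_p \amalg b\cdot D_{2p} \amalg (\text{no }D_{2p}/D_2\text{ since those have no }D_2\backslash D_{2p}/D_2\text{-free part}\ldots)$---actually I must also allow $D_{2p}/D_2$ and $D_{2p}/\mu_d$ summands, but the $\mu_p$-fixed count $= 2$ forces coefficient of $D_{2p}/\mu_p$ to be $1$ and no $D_{2p}/D_{2p}$ summand; the $D_2$-fixed count then reads $2\cdot(\#D_{2p}/\mu_p) + 0\cdot(\#D_{2p}/D_2) + (\text{contribution of }D_{2p})$. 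I would pin down the number of $D_{2p}$-free summands by matching total cardinality, which by Notation~\ref{not:Cardinalities Of Fixed Points} should come out to $d_p + c_p$; this reconciles with $c_p = 2^{(p-1)/2}-1$ counting the $D_2$-fixed points coming from the free summands, and $d_p$ being the leftover determined by the underlying order $|\Map^{D_2}(D_{2p},D_2)| = 2 + 2p(d_p+c_p)$... I will verify the arithmetic $2 + 2p\bigl(d_p + c_p\bigr) = 2 + 2p\bigl(\tfrac{2^{p-1}-1}{p}\bigr) = 2 + 2(2^{p-1}-1) = 2^p$, matching the underlying cardinality.

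**Expected obstacle.** The genuinely delicate part is step (1)/(4): correctly counting the underlying cardinality of $\Map^{D_2}(D_{2p}, D_2)$ and being careful that $D_2$ acts \emph{nontrivially} (freely) on itself, so that none of the summands can be $D_{2p}/D_{2p}$ and the $D_2$-fixed-point count of a $D_{2p}$-summand is $[D_{2p}:D_2] = p$ rather than something smaller---this is what distinguishes the present lemma from the preceding one (Lemma~\ref{lem: coinduced}) with target $\{a,b\}$, where the trivial action produced $D_{2p}/D_2$ summands and two isolated fixed points. Once the fixed-point counts for $K \in \{e, \mu_p, D_2, D_{2p}\}$ are in hand, the Burnside-ring argument (a $D_{2p}$-set is determined up to isomorphism by its fixed-point cardinalities) immediately yields the coefficients, and the identity $2p(d_p+c_p) = 2^p - 2$ (equivalently the definition $d_p + c_p = \tfrac{2^{p-1}-1}{p}$ from Notation~\ref{not:Cardinalities Of Fixed Points}) closes the computation.
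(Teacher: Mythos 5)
Your overall strategy (determine the $D_{2p}$-set by its marks, i.e.\ fixed-point cardinalities, via the double-coset description of coinduction from Lemma~\ref{unstable multiplicative double coset formula}) is sound and is the generic way to do this; the paper instead exploits the special structure of the target, observing that $D_2$ as a $D_2$-set is the restriction of the $D_{2p}$-set $D_{2p}/\mu_p$, which turns $\Map^{D_2}(D_{2p},D_2)$ into $\Map(D_{2p}/D_2, D_{2p}/\mu_p)$ and makes the $\mu_p$- and $D_2$-fixed points immediate. Both routes can work, but your step~(3) contains a genuine arithmetic error that, if carried through honestly, gives the wrong answer.

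Specifically, you write that for $K=D_2$ the double coset at $\gamma=e$ contributes the factor $(D_2)^{D_2}=\emptyset$, and then nonetheless conclude that $\Map^{D_2}(D_{2p},D_2)^{D_2}$ has $2^{(p-1)/2}$ elements by multiplying together the other factors. But the formula gives a \emph{product} over double cosets; if any factor is empty the whole product is empty. So the correct count is $\big|\Map^{D_2}(D_{2p},D_2)^{D_2}\big|=0$, not $2^{(p-1)/2}$. This matters: the per-orbit $D_2$-marks are $\big|(D_{2p}/D_2)^{D_2}\big|=1$ (not $0$, since $N_{D_{2p}}(D_2)=D_2$ when $p$ is odd), $\big|(D_{2p}/\mu_p)^{D_2}\big|=0$ (not $2$; $D_2$ acts freely on the two-element set $D_{2p}/\mu_p$), and $\big|(D_{2p})^{D_2}\big|=0$. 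Plugging your claimed value $2^{(p-1)/2}$ into the mark equations therefore forces $2^{(p-1)/2}$ summands of type $D_{2p}/D_2$; already for $p=3$ this gives $2$ such summands and then the total-cardinality equation $8=2+2\cdot 3+6c$ has $c=0$, whereas the correct answer is one free orbit. With the corrected value $0$ for the $D_2$-mark you instead get no $D_{2p}/D_2$ (and the absence of $D_{2p}$-fixed points, which you can also read off from $T^{D_2}=\emptyset$ via the universal property of coinduction, rules out $D_{2p}/D_{2p}$), and then the cardinality $2^p$ together with the $\mu_p$-mark $2$ yields exactly one $D_{2p}/\mu_p$ and $\tfrac{2^{p-1}-1}{p}=d_p+c_p$ free orbits. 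So your expected-obstacle paragraph correctly flags that $D_2$ acting freely on itself is the crux, but your step~(3) then contradicts that observation; fix the $D_2$-mark to $0$ and the argument closes cleanly.
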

\begin{proof}
    Since \(D_2\) is a free \(D_2\)-set, there are no \(D_{2p}\)-fixed points, by the universal property of coinduction. On the other hand, \(D_2\) as a \(D_2\)-set is actually in the image of the restriction from \(D_{2p}\)-sets via the quotient map \(D_{2p}\to D_2\), and this is compatible with the inclusion of \(D_2\) into \(D_{2p}\). This allows us to rewrite our \(D_{2p}\)-set as
    \[
        \Map^{D_2}(D_{2p},D_2)\cong \Map(D_{2p}/D_2,D_{2p}/\mu_p).
    \]
    Since \(\mu_p\) acts trivially in the target, the \(\mu_p\)-fixed points are 
    \[
        \Map(D_{2p}/D_2,D_{2p}/\mu_p)^{\mu_p}=\Map(D_{2p}/\mu_pD_2,D_{2p}/\mu_p)\cong D_{2p}/\mu_p.
    \]
    Finally, since \(D_2\)-acts freely in the target, there are no \(D_2\)-fixed points (and hence for any of the conjugates). Counting gives the desired answer.
\end{proof}

We need two much more general versions of these identifications, both of which follow from the preceding lemmas.

\begin{lem}\label{lem:NormalSG}
If \(N\subset H\subset G\) with \(N\) a normal subgroup of \(G\), then for any \(H\)-set \(T\) with \(T=T^N\), we have a natural bijection of \(G\)-sets
    \[
        \Map^H(G,T)\cong \Map^{H/N}(G/N,T).
    \]
\end{lem}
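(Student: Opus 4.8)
The statement asserts a natural bijection of $G$-sets $\Map^H(G,T)\cong\Map^{H/N}(G/N,T)$. The plan is to produce the map in both directions and check they are mutually inverse and $G$-equivariant. First I would observe that the quotient homomorphism $q\colon G\to G/N$ identifies $G$-sets on which $N$ acts trivially with $G/N$-sets; since $N$ is normal in $G$ (and $N\subset H$), the group $H/N$ makes sense and $q$ restricts to $H\to H/N$. The target $T$ is, by hypothesis, an $H$-set with $T=T^N$, hence pulled back from an $H/N$-set which by abuse I also call $T$. The key point is then the universal property of coinduction: for an $H$-set $T$, $\Map^H(G,T)$ represents the right adjoint to restriction along $H\hookrightarrow G$. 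So a map $S\to\Map^H(G,T)$ of $G$-sets is the same as a map $\iota_H^*S\to T$ of $H$-sets.

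Concretely I would send $f\in\Map^H(G,T)$ (an $H$-equivariant map $f\colon G\to T$, where $G$ carries left-$H$-action by left translation and the $G$-action on $\Map^H(G,T)$ is by right translation) to the map $\bar f\colon G/N\to T$ defined by $\bar f(gN):=f(g)$. This is well-defined because for $n\in N\subset H$ we have $f(gn)=$ (using that $f$ is $H$-equivariant for the appropriate action, or directly that $N$ acts trivially on $T$) $\ = f(g)$; more carefully, $f(ng) = n\cdot f(g) = f(g)$ since $T=T^N$, and combined with normality $f(gn)=f(g(nN)g^{-1}\cdot g)$ lands in the same value, so $\bar f$ is genuinely a function on cosets. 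One checks $\bar f$ is $H/N$-equivariant. Conversely, given $g\in\Map^{H/N}(G/N,T)$, precompose with $q$ to get $q^*g\in\Map^H(G,T)$. These two constructions are visibly inverse to one another, and both are natural in $T$ and compatible with the $G$-action (which on the right-hand side factors through $G/N$ acting, but as a $G$-set via $q$ it is the same thing). This gives the asserted natural bijection of $G$-sets.

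The only genuinely substantive point — and the step I expect to require the most care — is getting the variances and actions exactly right: making sure that the left-translation action of $H$ on $G$ used to define $\Map^H(G,-)$ descends correctly to the left $H/N$-action on $G/N$, and that the residual right-$G$-action on $\Map^H(G,T)$ matches the right-$G/N$-action on $\Map^{H/N}(G/N,T)$ under $q$. Once one fixes conventions (as in Lemma~\ref{unstable multiplicative double coset formula}, where coinduction fixed points were computed via the $K\times H^{\op}$-action on $G$), this is a direct unwinding: the equality $T=T^N$ is precisely what is needed so that no information is lost in passing from $G$ to $G/N$, and normality of $N$ is what makes $G/N$ an $H/N$-set at all. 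Everything else is formal from the adjunction defining coinduction, so the proof is short; I would present it as "this is immediate from the universal property of coinduction together with the fact that $T$ is inflated from an $H/N$-set," spelling out the two mutually inverse assignments as above.
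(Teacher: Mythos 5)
Your proposal matches the paper's proof: both use normality of $N$ to rewrite $gn = (gng^{-1})g$, then invoke $H$-equivariance of $f$ together with $T = T^N$ to conclude $f(gn)=f(g)$, so that $f$ descends to a well-defined map on $G/N$, with inflation along $q\colon G\to G/N$ as the inverse. Your expression ``$f(g(nN)g^{-1}\cdot g)$'' is a notational slip---it should read $f\big((gng^{-1})g\big)$---but the intended computation is exactly the chain $f(gn)=f(c_g(n)g)=c_g(n)\cdot f(g)=f(g)$ that the paper records.
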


\begin{proof}
    Since \(N\) is a normal subgroup of \(G\), for any \(g\in G\), \(n\in N\), and \(f\in\Map^H(G,T)\), we have
    \[
        f(gn)=f(c_g(n)g)=c_g(n) f(g)=f(g),
    \]
    where the first equality is by normality of \(N\), the second is by \(H\)-equivariance of \(f\), and the third is by the condition that \(T^N=T\). 
\end{proof}

\begin{cor}
Let $p$ be an odd prime. For any \(m\geq 1\), there are isomorphisms of $D_{2pm}$-sets
    \[
        \Map^{D_{2m}}(D_{2pm},\{a,b\}) \cong \{f_a,f_b\}\amalg \coprod_{i=1}^{2c_p} D_{2pm}/D_{2m} \amalg \coprod_{i=1}^{d_p}D_{2pm}/\mu_{m}.
    \]
    and
    \[
        \Map^{D_{2m}}(D_{2pm},D_{2m}/\mu_m)\cong D_{2pm}/\mu_{pm}\amalg \coprod^{d_p+c_p} D_{2pm}/\mu_{m}.
    \]
\end{cor}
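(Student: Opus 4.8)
The plan is to deduce both isomorphisms from the corresponding statements over $D_{2p}$ already proved in Lemma~\ref{lem: coinduced} and the lemma immediately preceding Notation~\ref{not:Cardinalities Of Fixed Points}, by quotienting out the normal subgroup $\mu_m$ and then transporting the decomposition back along inflation. First I would check the hypotheses of Lemma~\ref{lem:NormalSG} with $G=D_{2pm}$, $H=D_{2m}$, and $N=\mu_m$: the subgroup $\mu_m$ is normal in $D_{2pm}$ (it lies inside the cyclic rotation subgroup, which is normal), it is contained in $D_{2m}$, and both target sets satisfy $T=T^{\mu_m}$ — trivially for $T=\{a,b\}$, and for $T=D_{2m}/\mu_m$ because $\mu_m$ acts trivially on $D_{2m}/\mu_m$. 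Using $D_{2m}/\mu_m\cong D_2$ (identifying the $D_2$-set $D_{2m}/\mu_m$ with the regular set $D_2/e$) and $D_{2pm}/\mu_m\cong D_{2p}$, Lemma~\ref{lem:NormalSG} then yields natural isomorphisms of $D_{2pm}$-sets
\[
\Map^{D_{2m}}(D_{2pm},\{a,b\})\cong \Map^{D_2}(D_{2p},\{a,b\}),\qquad
\Map^{D_{2m}}(D_{2pm},D_{2m}/\mu_m)\cong \Map^{D_2}(D_{2p},D_2),
\]
where the right-hand sides are regarded as $D_{2pm}$-sets via inflation along the quotient $q\colon D_{2pm}\to D_{2pm}/\mu_m\cong D_{2p}$.

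Next I would substitute the known decompositions of the two right-hand sides. A short arithmetic check reconciles the exponents appearing there with the constants of Notation~\ref{not:Cardinalities Of Fixed Points}: $2^{(p+1)/2}-2=2c_p$ and $\big((2^{p-1}-1)/p\big)+1-2^{(p-1)/2}=d_p$, while $d_p+c_p$ already appears verbatim in the second lemma. So up to inflation the answers are $\ast\amalg\ast\amalg\coprod_{2c_p}D_{2p}/D_2\amalg\coprod_{d_p}D_{2p}$ and $D_{2p}/\mu_p\amalg\coprod_{d_p+c_p}D_{2p}$ respectively, with the two isolated points being the constant maps $f_a,f_b$.

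Finally I would compute the effect of inflation along $q$ on the orbits above, the key point being the identification of preimages of subgroups of $D_{2p}\cong D_{2pm}/\mu_m$: one has $q^{-1}(\{e\})=\mu_m$, $q^{-1}(\mu_p)=\mu_{pm}$ (since $\mu_{pm}/\mu_m\cong\mu_p$), and $q^{-1}(D_2)$ is a dihedral group of order $2m$, i.e. a conjugate of $D_{2m}$. Hence inflation sends $D_{2p}/e\mapsto D_{2pm}/\mu_m$, $D_{2p}/\mu_p\mapsto D_{2pm}/\mu_{pm}$, $D_{2p}/D_2\mapsto D_{2pm}/D_{2m}$, and the constant maps remain a pair of fixed points; reassembling gives the two asserted isomorphisms. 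The one mildly delicate point — and the step I would be most careful about — is this last identification $q^{-1}(D_2)=D_{2m}$: it is well defined because for $p$ odd all reflections in $D_{2p}$ are conjugate, so the orbit type $D_{2p}/D_2$ is unambiguous, and correspondingly the orbit type $D_{2pm}/D_{2m}$ is unambiguous up to conjugacy of $D_{2m}$ in $D_{2pm}$.
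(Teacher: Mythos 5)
Your proposal is correct and follows the same route the paper takes: apply Lemma~\ref{lem:NormalSG} with \(N=\mu_m\) to reduce to the \(D_{2p}\) computations, then inflate along \(D_{2pm}\to D_{2pm}/\mu_m\cong D_{2p}\). The extra care you take in checking the hypotheses, reconciling the exponents with \(c_p\) and \(d_p\), and identifying \(q^{-1}(D_2)\cong D_{2m}\) is all sound and merely spells out what the paper's terse proof leaves implicit.
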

\begin{proof}
    This follows from Lemma~\ref{lem:NormalSG}. The subgroup \(N=\mu_{m}\). The quotient \(D_{2m}/\mu_m\) is \(D_2\); the quotient \(D_{2pm}/\mu_m\) is \(D_{2p}\), and the result follows from the previous lemmas.
\end{proof}

We will now produce a Tambara reciprocity formula for sums for the group $D_{2p}$ when $p$ is an odd prime.

\begin{notation}\label{set of words}
Let
\[
    X=\Big(\Map^{D_2}\big(D_{2p},\{a,b\}\big)\Big)^{D_{2}}-\{f_a,f_b\}
\]
be a set of representatives for the \(D_{2}\)-fixed points. 
For a point \(\underline{x}\in X\), let \(x_i=\underline{x}(\zeta_p^{i})\).
Let 
\[
    Y=\Big(\Map^{D_2}\big(D_{2p},\{a,b\}\big)-D_{2p}\cdot X\Big)/D_{2p}
\]
be the set of free orbits in \(\Map^{D_2}\big(D_{2p},\{a,b\}\big)\), 
and for an equivalence class \([\underline{y}]\in Y\), let \(y_i=\underline{y}(\zeta_p^i)\).
\end{notation}

\begin{lem}[Tambara reciprocity for sums for dihedral groups] \label{lem:reciprocity}
Let $D_{2p}$ be the dihedral group where $p$ is an odd prime, with a generator $\tau$ of order $2$ and $\zeta_p$ of order $p$, and let $D_2$ be the cyclic subgroup generated by $\tau$. 
Let $\m{S}$ be a $D_{2p}$-Tambara functor. Then for all $a$ and $b$ in $\m{S}(D_{2p}/D_2)$ 
\begin{align*}
    N_{D_2}^{D_{2p}}(a_{D_2}+b_{D_2}) = & N_{D_2}^{D_{2p}}(a) +N_{D_2}^{D_{2p}}(b) \\ 
    & + \sum_{\underline{x}\in X }\tr_{D_2}^{D_{2p}}\left (x_0\prod^{(p-1)/2}_{i=1}  N_{e}^ {D_{2}}(\zeta^{i}_p\res^{D_{2}}_{e}(x_i))\right )\\ 
    & +\sum_{[\underline{y}] \in Y}\tr_{e}^{D_{2p}}\left (\prod_{i=1}^p \zeta^{i}_p \res^{D_{2}}_{e}(y_i) \right )
\end{align*}
where  $X$ and $Y$ are as in Notation~\ref{set of words}.
\end{lem}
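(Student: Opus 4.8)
The plan is to obtain Lemma~\ref{lem:reciprocity} as the specialization of the general Tambara reciprocity formula (Theorem~\ref{thm:TambRecipSums}) to the case $G=D_{2p}$, $H=D_2$, together with the explicit orbit decomposition of the $G$-set $\Map^{D_2}(D_{2p},\{a,b\})$ recorded in Lemma~\ref{lem: coinduced} and Notation~\ref{set of words}. So the first step is to recall that Theorem~\ref{thm:TambRecipSums} expresses $N_H^G(a+b)$ as a sum, indexed by $G$-orbits $[F]\in\Map^H(G,\{a,b\})/G$, of transfers $\mathrm{tr}_{K_F}^G$ of a product over double cosets $[\gamma]\in K_F\backslash G/H$ of norms $N_{K_F\cap\gamma^{-1}H\gamma}^{K_F}$ of $\gamma$-twisted restrictions of $F(\gamma^{-1})$. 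For $G=D_{2p}$ with $p$ an odd prime, by Lemma~\ref{lem: coinduced} every point of $\Map^{D_2}(D_{2p},\{a,b\})$ lies in an orbit of one of exactly three types: the two fixed points $f_a,f_b$ (with stabilizer $K_F=D_{2p}$), the $c_p$-many... actually the $2c_p$-many... I should just say the orbits of type $D_{2p}/D_2$ with stabilizer $K_F$ conjugate to $D_2$, and the free orbits $D_{2p}/e$ with trivial stabilizer. These three families index the three groups of terms in the claimed formula.

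Next I would handle each of the three families in turn. For $F=f_a$ (constant at $a$), the stabilizer is all of $D_{2p}$, the transfer $\mathrm{tr}_{D_{2p}}^{D_{2p}}$ is the identity, there is a single double coset $K_F\backslash G/H = D_{2p}\backslash D_{2p}/D_2=\ast$, and $K_F\cap\gamma^{-1}H\gamma = H = D_2$, so the internal norm $N_{D_2}^{D_{2p}}$ is applied to (a trivial twist of) $a$, giving the term $N_{D_2}^{D_{2p}}(a)$; the case $f_b$ gives $N_{D_2}^{D_{2p}}(b)$ identically. For an orbit with stabilizer $K_F$ a conjugate of $D_2$ — represented, after moving the representative, by an $\underline{x}\in X$ fixed by our chosen $D_2=\langle\tau\rangle$ — the transfer is $\mathrm{tr}_{D_2}^{D_{2p}}$, and the double coset space $D_2\backslash D_{2p}/D_2$ has representatives $\{e,\zeta_p,\dots,\zeta_p^{(p-1)/2}\}$, of cardinality $(p+1)/2$; the representative $\gamma=e$ contributes the factor $N_{D_2\cap D_2}^{D_2}(x_0)=x_0$, while each $\gamma=\zeta_p^i$ for $1\le i\le (p-1)/2$ has $D_2\cap\zeta_p^{-i}D_2\zeta_p^{i}=e$, so contributes $N_e^{D_2}(\zeta_p^i\,\mathrm{res}_e^{D_2}(x_i))$; multiplying these gives exactly the summand indexed by $\underline{x}\in X$. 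Finally, for a free orbit represented by $[\underline{y}]\in Y$, the stabilizer is trivial, the transfer is $\mathrm{tr}_e^{D_{2p}}$, the double cosets $e\backslash D_{2p}/D_2$ are the $p$ cosets $\{e,\zeta_p,\dots,\zeta_p^{p-1}\}$ (here I should be careful: $e\backslash G/H = G/H$ which has $p$ elements since $|D_{2p}/D_2|=p$), and since the stabilizer is trivial each $K_F\cap\gamma^{-1}H\gamma=e$ so each factor is simply $\zeta_p^i\,\mathrm{res}_e^{D_2}(y_i)$, yielding the last sum.

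The main obstacle, and the step deserving the most care, is the bookkeeping of representatives and the matching of double-coset data across the three cases: one must fix the $D_{2p}$-orbit representatives in $\Map^{D_2}(D_{2p},\{a,b\})$ consistently with the $D_2$-fixed-point representatives $\underline{x}\in X$ and the free-orbit representatives $[\underline{y}]\in Y$ of Notation~\ref{set of words}, and verify that with these choices the "$\gamma^{-1}$" appearing in Theorem~\ref{thm:TambRecipSums} indeed evaluates $F$ at the powers $\zeta_p^i$ as written (equivalently, that the chosen double-coset representatives are compatible with the chosen orbit representatives). This is exactly the kind of indexing subtlety flagged in the proof of Theorem~\ref{thm:TambRecipSums} and unpacked in Figure~\ref{fig:ExtendedExponential}; once the representatives are pinned down, the identifications $D_2\cap\zeta_p^{-i}D_2\zeta_p^i=e$ for $1\le i\le(p-1)/2$ and the count $|D_2\backslash D_{2p}/D_2|=(p+1)/2$ are immediate, and the formula falls out. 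I would therefore devote the body of the proof to stating the choice of representatives and then simply reading off the three families of terms from Theorem~\ref{thm:TambRecipSums}, citing Lemma~\ref{lem: coinduced} for the orbit decomposition that guarantees there are no other orbit types.
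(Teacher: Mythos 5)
Your proposal is correct and follows exactly the same route the paper takes: the paper's entire proof is the one-line remark that the result follows from Theorem~\ref{thm:TambRecipSums} using the orbit decomposition of $\Map^{D_2}(D_{2p},\{a,b\})$ in Lemma~\ref{lem: coinduced}. You have merely unpacked that one-liner, correctly identifying the three orbit types, their stabilizers, the double-coset spaces, and (including the $F(\gamma^{-1})$ indexing subtlety, which is resolved by the $D_2$-invariance $x_{-i}=x_i$) the resulting terms.
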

\begin{proof}
This follows from Theorem~\ref{thm:TambRecipSums}, using the identification of coinduction given by Lemma~\ref{lem: coinduced}.
\end{proof}
\begin{exm}
Explicitly, in the case of $p=3$, we have the formula 
\begin{align*}
N_{D_2}^{D_{6}}(a+b)=&N_{D_2}^{D_{6}}(a_{D_2})+N_{D_2}^{D_{6}}(b_{D_2})\\
&+\tr_{D_2}^{D_{6}}(b_{D_2} \cdot N_e^{D_2}(\zeta_5\cdot \res^{D_2}_e(a_{D_2})))\\
&+ \tr_{D_2}^{D_{6}}(a_{D_2} \cdot N_e^{D_2}(\zeta_5\cdot \res^{D_2}_e(b_{D_2}))).
\end{align*}
because in this case the set $Y$ is empty.
When $p=7$, abbreviating $a_e=\res_e^{D_2}a$ and $b_e=\res_e^{D_2}b$ there is a summand 
\[ \tr_e^{D_{14}}\left ( \xi_7b_e\cdot \xi_7^2b_e\cdot \xi_7^3a_e\cdot \xi_7^4b_e\cdot \xi_7^5a_e\cdot \xi_7^6a_e\cdot \xi_7^7a_e\right ).\]
\end{exm}
\subsection{Truncated \texorpdfstring{$p$}{p}-typical Real Witt vectors of  \texorpdfstring{$\m{\mathbb{Z}}$}{the constant Mackey functor Z}}
For $p$ an odd prime, we compute the $D_{2p^{k}}$-Tambara functor  $\m{\pi}_0^{D_{2p^k}}\THR(H\m{\mathbb{Z}})$,
using the formula 
\[ \m{\pi}_0^{D_{2p^k}}\THR(H\m{\mathbb{Z}}) \cong N_{D_2}^{D_{2p^k}}\mZ\square_{N_{e}^{D_{2p^k}}\iota_e^*\mZ} N_{\zeta D_2\zeta^{-1}}^{D_{2p^k}}c_{\zeta}\mZ \] 
from Theorem~\ref{thm:linearization} and Proposition~\ref{prop:HRBox}. 
We therefore begin by computing the Mackey functor norm, $N_{D_2}^{D_{2p^k}}\mZ$. 

 Since we will be working both with dihedral groups as groups and with them as representatives of isomorphism classes of \(D_{2m}\)-sets in the corresponding Burnside ring, we will use distinct notation to keep track.

\begin{notation}
    If \(T\) is a finite \(G\)-set, then let \([T]\) denote the isomorphism class of \(T\) as an element of the Burnside ring. When \(T=G/G\), we will also simply write this as \(1\).
\end{notation}

We also need some notation for generation of a Mackey functor, especially representable ones.

\begin{notation}
    If \(T\) is a finite \(G\)-set, let \(\mA_{T}\cdot f\) be \(\mA_T\), with the canonical element \(T\xleftarrow{=}T\xrightarrow{=}T\) named \(f\).
\end{notation}

\begin{lem}\label{z2todpknormofz}
Let $p$ be an odd prime. There is an isomorphism of $D_{2p}$-Tambara functors 
\[ 
N_{D_2}^{D_{2p}}(\mZ)\cong \mA^{D_{2p}}/\big(2-[D_{2p}/\mu_p]\big).
\]
\end{lem}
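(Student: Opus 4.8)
The plan is to compute the Mackey functor norm $N_{D_2}^{D_{2p}}(\mZ)$ directly from Definition \ref{def: norm in Mackey}, namely $N_{D_2}^{D_{2p}}(\mZ) = \upi_0^{D_{2p}} N_{D_2}^{D_{2p}} H\mZ$, and then identify the resulting Green/Tambara functor with the quotient of the Burnside Tambara functor $\mA^{D_{2p}}$ by the ideal (really: the congruence) generated by $2 - [D_{2p}/\mu_p]$. The key structural input is that the norm functor on Mackey functors, being a composite of $H\mZ \mapsto N_{D_2}^{D_{2p}}H\mZ$ with $\upi_0^{D_{2p}}$, is the left adjoint to the restriction functor on commutative (Green, in fact Tambara) algebras; equivalently, $N_{D_2}^{D_{2p}}$ is left adjoint to $\iota_{D_2}^*$ at the level of Tambara functors, so $N_{D_2}^{D_{2p}}(\mZ)$ is the \emph{universal} $D_{2p}$-Tambara functor equipped with a map of $D_2$-Tambara functors $\mZ \to \iota_{D_2}^* N_{D_2}^{D_{2p}}(\mZ)$. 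First I would recall that $\mZ$, as a $D_2$-Tambara functor, is the quotient of $\mA^{D_2}$ by the relation $2 = [D_2]$ (i.e. $\mZ = \mA^{D_2}/(2 - [D_2/e])$; this is the constant Tambara functor $\m{\mathbb Z}$). So $N_{D_2}^{D_{2p}}(\mZ)$ is computed as a pushout/coequalizer: apply the left adjoint $N_{D_2}^{D_{2p}}$ to the coequalizer presentation $\mA^{D_2}[x]/(x - [D_2/e],\, x - 2) \rightrightarrows \mA^{D_2} \to \mZ$.

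Concretely, I would argue as follows. The norm of the free $D_2$-Tambara functor on one generator $\mA^{D_2}_{D_2/e}$ (generated by a single element at level $D_2/e$) is the free $D_{2p}$-Tambara functor $\mA^{D_{2p}}_{D_{2p}/e}$ on one generator at level $D_{2p}/e$, by the double-coset / induction behavior of norms on representables (this is the Tambara-functor analogue of \cite[Proposition 3.7]{BGHL19}, which says norms send representables to representables; I would cite the relevant statement from the Tambara-functor literature). Similarly $N_{D_2}^{D_{2p}}(\mA^{D_2}) = \mA^{D_{2p}}$ since the norm of the terminal/unit object is the unit. Applying $N_{D_2}^{D_{2p}}$ to the two parallel maps presenting $\mZ$ and using that $N$ preserves colimits of Tambara functors, $N_{D_2}^{D_{2p}}(\mZ)$ is the coequalizer in $\Tamb_{D_{2p}}$ of the two maps $\mA^{D_{2p}}_{D_{2p}/e} \rightrightarrows \mA^{D_{2p}}$ sending the generator to $N_{D_2}^{D_{2p}}(\text{that generator in }\mA^{D_2})$, which one computes via the multiplicative double coset formula to be $[D_{2p}/\mu_p]$ on one side (the norm of the free generator, whose underlying $D_{2p}$-set is $D_{2p}\times_{D_2}(D_2/e) = D_{2p}/? $ — I need to be careful: $N^{D_{2p}}_{D_2}$ of the class $[D_2/e]$, thought of inside the Burnside ring, produces the indexed-smash-type class which by the exponential-diagram description equals $[\Map^{D_2}(D_{2p}, D_2/e)]$, and the fixed-point counts give $[D_{2p}/\mu_p]$ plus free orbits — here I would invoke the lemma computing $\Map^{D_2}(D_{2p},D_2)$, but at the level of the \emph{multiplicative} generator the relevant simplification occurs because we are norming the \emph{generator}, not a sum) and $2 = [D_{2p}/D_{2p}] + [D_{2p}/D_{2p}]$ on the other. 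Carrying this out yields exactly the congruence $2 - [D_{2p}/\mu_p]$ on $\mA^{D_{2p}}$.

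I expect the main obstacle to be the bookkeeping in the double-coset computation: one must correctly identify $N_{D_2}^{D_{2p}}$ applied to the relation $2 - [D_2/e]$ in $\mA^{D_2}$, and this requires the Tambara-reciprocity / exponential-diagram analysis (Theorem \ref{thm:TambRecipSums}, Lemma \ref{lem: coinduced}) to handle the fact that the norm is not additive — the norm of $2 = 1+1$ is not $N(1)+N(1)$ but involves transfer terms indexed by $\Map^{D_2}(D_{2p},\{1,1\})$-orbits, though since we are norming the single element $[D_2/e] = $ the free generator (not literally the sum $1+1$), the cleanest route is to use the multiplicative double coset formula of Theorem \ref{thm:doublecoset} (or its Mackey-functor shadow, Proposition \ref{charofdibar} combined with Theorem \ref{thm:linearization}) to compute $N_{D_2}^{D_{2p}}(\mZ)$ as $N_{D_2}^{D_{2p}}\mZ \square_{N_e^{D_{2p}}\iota_e^*\mZ} N_{\zeta D_2\zeta^{-1}}^{D_{2p}}c_\zeta \mZ$'s first factor — wait, that is circular. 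So instead I would: (i) establish the adjunction/universal-property characterization of $N_{D_2}^{D_{2p}}(\mZ)$ as a Tambara functor; (ii) present $\mZ$ as a Tambara-functor quotient of $\mA^{D_2}$; (iii) compute the image of the defining relation under the norm using the exponential diagram for $g: D_{2p}/D_2 \to D_{2p}/D_{2p}$ and $h$ the fold map, reading off from Figure \ref{fig:ExtendedExponential} that $N^{D_{2p}}_{D_2}\tr^{D_2}_e(*)$-type contributions collapse because $\mZ$ is cohomological and the free generator is being normed; and (iv) verify the universal property of $\mA^{D_{2p}}/(2 - [D_{2p}/\mu_p])$ matches: a map out of it to a $D_{2p}$-Tambara functor $\m{S}$ is a choice of unit making $2 = [D_{2p}/\mu_p]$ hold, which restricts along $D_2 \hookrightarrow D_{2p}$ to the relation $2 = [D_2/e]$ defining $\mZ$ (using $\res^{D_{2p}}_{D_2}[D_{2p}/\mu_p] = [D_2/e]$, a double-coset computation), giving the desired bijection of hom-sets. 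Step (iv), checking that the restriction of the single relation $2-[D_{2p}/\mu_p]$ generates (as a congruence) exactly the pulled-back relation and no more, is where I would be most careful, and I would lean on the explicit structure of $\mA^{D_{2p}}$ as a free abelian group on $[D_{2p}/K]$ for conjugacy classes of subgroups $K$, together with the Tambara-reciprocity formulae of Lemma \ref{lem:reciprocity} to control how the congruence propagates through transfers and norms.
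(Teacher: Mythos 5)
Your outline has the right skeleton and matches the paper's strategy at a high level --- present $\mZ$ as a colimit of representables, use that the Mackey-functor norm commutes with (sifted) colimits (Lemma~\ref{norm of sifted colimit}), identify the norm of a representable via coinduction, and control the resulting relations via Tambara reciprocity. However, there are concrete errors, and the one computation that does all the work is skipped.

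\textbf{Presentation.} Since $2-[D_2/e]$ lives in $\mA^{D_2}(D_2/D_2)$, the source of the (co)equalizer presenting $\mZ$ must be the representable at $D_2/D_2$, not $\mA^{D_2}_{D_2/e}$ as you write. Moreover $\mA^{D_2}_{D_2/e}$ is a representable \emph{Mackey} functor, not ``the free $D_2$-Tambara functor on one generator''; those are very different objects, and conflating them is an error that propagates. Because the norm only commutes with sifted colimits (not general cokernels), one should upgrade to the reflexive coequalizer $\mA^{D_2}_{\{a,b\}}\rightrightarrows\mA^{D_2}\to\mZ$ with $s_0(1)=b$, $d_0(a)=[D_2]$, $d_1(a)=2$, $d_i(b)=1$; this is what the paper uses, and then $N_{D_2}^{D_{2p}}\mA^{D_2}_{\{a,b\}}\cong\mA_{\Map^{D_2}(D_{2p},\{a,b\})}$.

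\textbf{The central computation is wrong/omitted.} Your claim that the two images are $[D_{2p}/\mu_p]$ and $2$ (so that the relation $2-[D_{2p}/\mu_p]$ pops out immediately) is false. By Lemma~\ref{lem: coinduced} and its companion,
\[
N_{D_2}^{D_{2p}}[D_2/e]=[D_{2p}/\mu_p]+(c_p+d_p)[D_{2p}],
\]
and by Theorem~\ref{thm:TambRecipSums},
\[
N_{D_2}^{D_{2p}}(1+1)=2+2c_p[D_{2p}/D_2]+d_p[D_{2p}],
\]
so the difference is $\big(2-[D_{2p}/\mu_p]\big)+c_p\big(2[D_{2p}/D_2]-[D_{2p}]\big)$, and one also picks up the relation $2-[D_2]$ at level $D_{2p}/D_2$ from the $X$-summands. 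The heart of the paper's proof is showing that the extra terms equal $c_p\tr_{D_2}^{D_{2p}}(2-[D_2])$ and hence are redundant given the level-$D_2$ relation, and that the level-$D_2$ relation is in turn the restriction of $2-[D_{2p}/\mu_p]$. Your proposal never performs this cancellation; it is not a bookkeeping afterthought but the entire content of the lemma.

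\textbf{The universal-property route does not sidestep this.} Showing ``$2=[D_2/e]$ in $\m{S}(D_{2p}/D_2)$ implies $2=[D_{2p}/\mu_p]$ in $\m{S}(D_{2p}/D_{2p})$'' requires applying $N_{D_2}^{D_{2p}}$ to the level-$D_2$ relation inside $\m{S}$, which uses the very same two formulas above plus vanishing of $\tr_{D_2}^{D_{2p}}(2-[D_2])$. You flag this (``step (iv)'') as a place to be careful, but it is exactly the Tambara-reciprocity computation the direct argument needs anyway; leaving it undone leaves the proposal incomplete.
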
 
\begin{proof}
The constant Mackey functor \(\mZ\) for \(D_2\) is the quotient of \(\mA^{D_2}\) by the element \(2-[D_2]\in\mA^{D_2}(D_2/D_2)\) using the conventions of Section~\ref{rep functors}. Equivalently, we can rewrite this as a coequalizer of maps, both of which are represented by multiplication by a fixed \(D_2\)-set:
\[
\begin{tikzcd}
\mA^{D_2}\ar[r,"2" ',shift right=1ex]\ar[r,shift left=1ex,"D_2"]& \mA^{D_2} .
\end{tikzcd}
\]
Extend this to a reflexive coequalizer by formally putting in the zeroth degeneracy. This represents \(\mZ\) as a sifted colimit of free Mackey functors:
\[
\begin{tikzcd}
    \mA^{D_2}\cdot a\oplus\mA^{D_2}\cdot b
        \ar[r, shift right=1ex, "d_0"']
        \ar[r, shift left=1ex, "d_1"] 
    &
    \mA^{D_2}
        \ar[l, "s_0" description]
        \ar[r]
    &
    \mZ,
\end{tikzcd}
\]
where \(s_0(1)=b\), and where
\[
    d_0(b)=d_1(b)=1\text{ and }
    d_i(a)=\begin{cases}
        [D_2] & i=0 \\
        2 & i=1.
    \end{cases}
\]
\noindent The norm commutes with sifted colimits, so we deduce that 
we have a reflexive coequalizer diagram
\[
\begin{tikzcd}[column sep=large]
    N_{D_2}^{D_{2p}}(\mA^{D_2}\cdot a\oplus \mA^{D_2}\cdot b)
        \ar[r,shift right=2.5ex,"N(d_1)" '] 
        \ar[r,shift left=2.5ex,"N(d_0)"] & 
    N_{D_2}^{D_{2p}}(\mA^{D_2})
        \ar[l,"N(s_0)" description]
        \ar[r]
        &
    N_{D_2}^{D_{2p}}(\mZ).
\end{tikzcd}
\]
\noindent The norm is defined by the left Kan extension of coinduction, so we have a canonical isomorphism for representable functors:
\[
 N_{D_2}^{D_{2p}} \big(\mA^{D_2}\oplus \mA^{D_2}\big)\cong N_{D_2}^{D_{2p}}\big(\mA_{\{a,b\}}^{D_2}\big)\cong \mA_{\Map^{D_2}(D_{2p},\{a,b\})},
\]
and the norm of the Burnside Mackey functor for \(D_2\) is the Burnside Mackey functor for \(D_{2p}\). Here and from now on we simply write $\mA_T$ for $\mA_T^{D_{2p}}$. Lemma~\ref{lem: coinduced} determines the \(D_{2p}\)-set we see here: 
\[
\Map^{D_2}\big(D_{2p},\{a,b\}\big)=\{f_a\}\amalg \{f_b\}\amalg\coprod_{\m{x}\in X} D_{2p}/D_2\cdot \m{x}\amalg \coprod_{[y]\in Y} D_{2p}\cdot [y].
\]
This decomposition gives a decomposition of the representable:
\[
\mA_{\Map^{D_2}(D_{2p},\{a,b\})}\cong 
\mA\cdot f_{a}\oplus
\mA\cdot f_{b}\oplus
\bigoplus_{\m{x}\in X} \mA_{D_{2p/D_2}}\cdot \m{x}\oplus
\bigoplus_{[y]\in Y} \mA_{D_{2p}}\cdot y.
\]
\noindent Since the direct sum is the coproduct in Mackey functors, we can view each summand in the coequalizer as independently introducing a relation on \(\mA\). We can therefore 
work one summand at a time, keeping track of the added relations.
By the Yoneda Lemma, maps from a representable Mackey functor \(\mA_T\cdot f\) to \(\mA\) are in bijective correspondence with elements of \(\mA(T)\), and the bijection is given by evaluating a map of Mackey functors on the canonical element \(f\).  To determine these, we work directly, using the definition of the representables. 
For a general summand parameterized by the orbit of a function \(D_{2p}/D_2\to\{a,b\}\), the value of the corresponding face map is built out of the functions values at the points of \(D_{2p}/D_2\). The slogan here is that this is simply a ``decategorification'' of the Tambara reciprocity formula we already described. 

The first case is the constant functions. Here, we have
\[
    d_i(f_*)=N_{D_2}^{D_{2p}}\big(d_i(*)\big),
\]
for \(*=a,b\). Both \(d_0\) and \(d_1\) agree on \(b\) with value \(1\), so the summand \(\mA\cdot f_b\) contributes no relation. For the summand \(\mA\cdot f_a\), we use that the norms in the Burnside Tambara functor are given by coinduction:
\begin{align*}
    d_0(f_a)&=N_{D_2}^{D_{2p}}\big([D_2]\big)=[D_{2p}/\mu_p]+(d_p+c_p)[D_{2p}]\text{ and}\\
    d_1(f_a)&=N_{D_2}^{D_{2p}}(2)=
    2+2c_p[D_{2p}/D_2]+d_p [D_{2p}],
\end{align*}
where 
\[
c_p=2^{\frac{p-1}{2}}-1\text{ and }d_p=\frac{2^{p-1}-1}{p}-c_p
\]
are as defined in Notation~\ref{not:Cardinalities Of Fixed Points}.
Coequalizing these two maps 
introduces a relation
\[
2+2c_p[D_{2p}/D_2]+d_p[D_{2p}]-\big([D_{2p}/\mu_p]+(d_p+c_p)[D_{2p}]\big),
\]
which simplifies to
\[
    \big(2-[D_{2p}/\mu_p]\big)+c_p\big(2[D_{2p}/D_2]-[D_{2p}]\big)
\]
in \(\mA(D_{2p}/D_{2p}\).

The second case we consider is the easiest one: the summands parameterized by \(Y\). 
Maps from \(\mA_{D_{2p}}\cdot y\) to \(\mA\) are in bijection with elements of \(\mA(D_{2p}/e)=\mZ\). The explicit 
value is the corresponding summand from the 
Tambara reciprocity formula:
\[
d_i(y)=
\prod_{j=0}^{p} \res_{e}^{D_2}\big(d_i(y_j)\big),
\]
since the Weyl action on the underlying abelian group in the Burnside Mackey functor is trivial. Since \(d_0(b)=d_1(b)=1\) and since 
\[
\res_{e}^{D_2}\big(d_0(a)\big)=\res_e^{D_2}\big(d_1(a)\big)=2,
\]
both face maps always agree on these summands, with value given by
\[
    d_i(y)= 2^{|y^{-1}(a)|}.
\]

Finally, the trickiest summands are the ones parameterized by \(X\). 
Since we are mapping out of 
\[
\mA_{D_{2p}/D_{2}}\cong \Ind_{D_2}^{D_{2p}}\mA^{D_2},
\]
by the induction-restriction adjunction, it suffices to understand instead the restriction to \(D_2\) of the target. Here we use the multiplicative double coset formula: for a general \(D_2\)-Mackey functor \(\mM\), we have 
\[
i_{D_2}^{\ast}N_{D_2}^{D_{2p}}\mM\cong \mM\square\bigbox_{D_2\backslash D_{2p}\slash D_2-D_2eD_2} N_{e}^{D_2} i_e^{\ast}\mM.
\]
For the Burnside Mackey functor, there is a confusing collision: every Mackey functor in this expression is the Burnside Mackey functor, so we cannot distinguish between \(\mA^{D_2}\) as itself or as \(N_{e}^{D_2}\mathbb{Z}\). Writing things in terms of the actual norms of a generic Mackey functor, helps disambiguate. To a function \(\m{x}\) with stabilizer \(D_2\), we have the corresponding summand 
from the 
Tambara reciprocity formula:
\[
    d_i(\m{x})=d_i(x_0)\cdot\prod_{j=1}^{(p-1)/2} N_{e}^{D_2}\big(\res_{e}^{\ast}d_i(x_j)\big)
\]
where again, the triviality of the Weyl group allows us to ignore it. 
Note also that with the exception of \(x_0\), we actually only see the restriction of \(d_i(x_j)\). As we saw in the second case, the two face maps here always agree, with value \(1\) if \(x_j=b\) and with value \(2\) if \(x_j=a\).

If \(x_0=b\), then 
\(
    d_0(\m{x})=d_1(\m{x}),
\)
since the product factors always agreed.

If \(x_0=a\), then we have 
\[
    d_i(f)=d_i(a)\cdot \prod_{j=1}^{(p-1)/2} N_{e}^{D_2}\res_e^{\ast} d_i(x_j)= d_j(a)(2+[D_2])^k,
\]
where \(k\) is the number of \(j\) between \(1\) and \((p-1)/2\) such that \(x_j=1\).  The coequalizer therefore induces the relation
\[
    \big(2-[D_2]\big)\cdot \big(2+[D_2]\big)^k.
\]
Since these are multiples of the case \(i=0\), so we deduce that all of these summands contribute exactly one relation:
\[
2-[D_2]\in\mA(D_{2p}/D_2).
\]

Summarizing, we have that the norm \(N_{D_2}^{D_{2p}}\mZ\) is
\[
\mA/\Big(
\big(2[D_{2p}/D_{2p}]-[D_{2p}/\mu_p]\big) + c_p \big(2[D_{2p}/D_2]-[D_{2p}]\big), \big(2[D_{2}/D_2]-[D_2]\big)
\Big). 
\]
where to help the reader keep track of where in the Mackey functor the relations are born, we replace \(1\in\mA(G/H)\) with \(H/H\).

This simplifies in several ways, however. 
Since transfers in the Burnside Mackey functor are given by induction, 
\[
c_p\big(2[D_{2p}/D_{2}]-[D_{2p}]\big)=c_p tr_{D_2}^{D_{2p}} \big(2[D_{2}/D_{2}]-[D_2]\big),
\]
so we can remove this from the first relations with impunity, giving
\[
\mA/\Big(2[D_{2p}/D_{2p}]-[D_{2p}/\mu_p], 2[D_{2}/D_{2}]-[D_{2}]\Big)
.
\]
We also have
\[
res_{D_2}^{D_{2p}}\big(2[D_{2p}/D_{2p}]-[D_{2p}/\mu_p]\big)=2[D_2/D_2]-[D_2],
\]
so we can now drop the second relation. This yields
\[
N_{D_2}^{D_{2p}} \mZ\cong\mA/\big(2-[D_{2p}/\mu_p]\big).
\]
Since \(\mZ\) is a Tambara functor, \(N_{D_2}^{D_{2p}}\mZ\) is, and as a quotient of \(\mA\), it has a unique Tambara functor structure.
\end{proof}

This has a somewhat surprising consequence: the form of the norm is the same as what we started with, in that we are coequalizing two maps represented by \(G\)-sets of cardinality \(2\). 
Induction gives the following generalization.

\begin{thm}\label{thm:NormToDmofZ}
    For any odd integer \(m\geq 1\), we have an isomorphism of \(D_{2m}\)-Tambara functors
    \[
        N_{D_2}^{D_{2m}}\mZ \cong \mA^{D_{2m}}/(2-[D_{2m}/\mu_{m}]).
    \]
\end{thm}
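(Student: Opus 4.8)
The strategy is induction on the number of prime factors of the odd integer $m$, peeling off one odd prime at a time via transitivity of the norm. The base case $m=1$ is trivial, since then $D_{2m}=D_2$, the norm is the identity, and $\mA^{D_2}/(2-[D_2/\mu_1])=\mZ$; the case where $m$ is itself an odd prime is Lemma~\ref{z2todpknormofz}. For the inductive step, write $m=pn$ with $p$ an odd prime and $n$ odd, and use transitivity of the Mackey-functor norm, $N_{D_2}^{D_{2m}}\cong N_{D_{2n}}^{D_{2m}}\circ N_{D_2}^{D_{2n}}$, together with the inductive hypothesis $N_{D_2}^{D_{2n}}\mZ\cong\mA^{D_{2n}}/(2-[D_{2n}/\mu_n])$. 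Thus it remains to compute $N_{D_{2n}}^{D_{2m}}$ applied to $\mA^{D_{2n}}/(2-[D_{2n}/\mu_n])$, and the plan is to repeat the argument of Lemma~\ref{z2todpknormofz} with $(D_2,D_{2p})$ replaced by $(D_{2n},D_{2m})$ throughout.

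Concretely, I would present $\mA^{D_{2n}}/(2-[D_{2n}/\mu_n])$ as a reflexive coequalizer of free $D_{2n}$-Mackey functors $\mA^{D_{2n}}\cdot a\oplus \mA^{D_{2n}}\cdot b\rightrightarrows \mA^{D_{2n}}$ with $s_0(1)=b$, $d_i(b)=1$, $d_1(a)=2$, and $d_0(a)=[D_{2n}/\mu_n]$ --- the same shape as the presentation of $\mZ$ used in Lemma~\ref{z2todpknormofz}, which is available precisely because $[D_{2n}/\mu_n]$ is again a $D_{2n}$-set of cardinality $2$. Since $N_{D_{2n}}^{D_{2m}}$ commutes with sifted colimits (Lemma~\ref{norm of sifted colimit}) and sends representables to representables, with $N_{D_{2n}}^{D_{2m}}\mA^{D_{2n}}_{T}\cong \mA^{D_{2m}}_{\Map^{D_{2n}}(D_{2m},T)}$ and $N_{D_{2n}}^{D_{2m}}\mA^{D_{2n}}\cong \mA^{D_{2m}}$, applying it yields a reflexive coequalizer presenting $N_{D_2}^{D_{2m}}\mZ$ whose ``relations'' term is indexed by the $D_{2m}$-set $\Map^{D_{2n}}(D_{2m},\{a,b\})$. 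The corollary following Lemma~\ref{lem:NormalSG} (with $\mu_n$ as the normal subgroup and $m/n=p$ prime) decomposes this set as $\{f_a,f_b\}\amalg\coprod^{2c_p}D_{2m}/D_{2n}\amalg\coprod^{d_p}D_{2m}/\mu_n$ (with $c_p,d_p$ as in Notation~\ref{not:Cardinalities Of Fixed Points}), and the companion identification $\Map^{D_{2n}}(D_{2m},D_{2n}/\mu_n)\cong D_{2m}/\mu_m\amalg\coprod^{d_p+c_p}D_{2m}/\mu_n$ evaluates $N_{D_{2n}}^{D_{2m}}([D_{2n}/\mu_n])$ as a coinduction in the Burnside Tambara functor.

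As in Lemma~\ref{z2todpknormofz}, each orbit summand is analyzed by the decategorified form of Tambara reciprocity (Theorem~\ref{thm:TambRecipSums}), together with the multiplicative double coset formula for $D_{2n}\subset D_{2m}$. The summand $f_b$, the $\mu_n$-isotropy summands $D_{2m}/\mu_n$, and every $D_{2m}/D_{2n}$-summand whose labeling function takes the value $b$ at the central double coset contribute no relation, because $d_0(a)$ and $d_1(a)$ restrict to the same element $2$ on $\mu_n$ and hence the two face maps agree there. Each $D_{2m}/D_{2n}$-summand whose labeling function is $a$ at the central double coset contributes a multiple of $2-[D_{2n}/\mu_n]\in\mA^{D_{2m}}(D_{2m}/D_{2n})$, with the labeling that is $a$ centrally and $b$ elsewhere contributing exactly $2-[D_{2n}/\mu_n]$; and the summand $f_a$ contributes
\[
N_{D_{2n}}^{D_{2m}}(2)-N_{D_{2n}}^{D_{2m}}([D_{2n}/\mu_n])=\bigl(2-[D_{2m}/\mu_m]\bigr)+c_p\,\tr_{D_{2n}}^{D_{2m}}\bigl(2-[D_{2n}/\mu_n]\bigr)
\]
in $\mA^{D_{2m}}(D_{2m}/D_{2m})$. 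Exactly as in the prime case one then simplifies: since $2-[D_{2n}/\mu_n]\in\mA^{D_{2m}}(D_{2m}/D_{2n})$ lies in the ideal, so does its transfer, which removes the $c_p(\cdots)$ term and reduces the $f_a$-relation to $2-[D_{2m}/\mu_m]$; and $\res_{D_{2n}}^{D_{2m}}(2-[D_{2m}/\mu_m])=2-[D_{2n}/\mu_n]$, so all the remaining relations are redundant. This gives $N_{D_2}^{D_{2m}}\mZ\cong \mA^{D_{2m}}/(2-[D_{2m}/\mu_m])$ as Mackey functors, and the Tambara structure is the unique one on this quotient of $\mA^{D_{2m}}$, completing the induction.

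The main obstacle is the bookkeeping in this last step. In the prime case $\mu_n=e$, so the $\mu_n$-isotropy summands were free $D_{2m}$-orbits and visibly inert, whereas now there are genuinely three orbit types to track; one must check carefully that the only nonredundant relation produced across all of them is the single relation read off from the constant function $f_a$, and that the intermediate norms $N_{\mu_n}^{D_{2n}}$ appearing in the double coset formula enter only through restrictions on which $d_0(a)$ and $d_1(a)$ already agree. All of this is a direct transcription of the computation already carried out for one odd prime, with the coinduction inputs supplied by Lemma~\ref{lem:NormalSG} and its corollary, so it is routine if somewhat tedious.
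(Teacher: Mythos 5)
Your proposal is correct and is exactly the induction the paper gestures at: the paper's entire proof is the preceding remark (``the form of the norm is the same as what we started with... Induction gives the following generalization''), and you have carried out that induction in full, correctly using transitivity of the norm, the reflexive-coequalizer presentation, the coinduction identifications from Lemma~\ref{lem:NormalSG} and its corollary, and the same relation-bookkeeping as in Lemma~\ref{z2todpknormofz}. In particular your observations that all nontrivial double-coset intersections $D_{2n}\cap\gamma D_{2n}\gamma^{-1}$ equal $\mu_n$ and that $\res_{\mu_n}^{D_{2n}}$ equalizes $d_0(a)$ and $d_1(a)$ are precisely what makes the inductive step go through unchanged.
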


\subsection*{Unpacking the norm}
We pause here to unpack this definition some, since the quotient of Mackey functors by a congruence relation might be less familiar than the abelian group case. 

\begin{defin}
    For any odd natural number \(m\), let 
    \[
        \m{R}_m=N_{D_2}^{D_{2m}}\mZ.
    \]
\end{defin}

\begin{lem}\label{lem: restriction of the norm of Z}
    For any \(k\) dividing \(m\), we have
    \[
        i_{D_{2k}}^\ast \m{R}_m\cong \m{R}_k,
    \]
    and
    \[
        i_{\mu_k}^\ast \m{R}_m\cong \mA^{\mu_k}.
    \]
\end{lem}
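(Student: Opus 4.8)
The plan is to deduce both isomorphisms from Theorem~\ref{thm:NormToDmofZ} together with standard compatibilities of the norm with restriction. The key external input is the multiplicative double coset formula: for subgroups $H,K$ of a finite group $G$ and an $H$-Mackey functor $\mM$, one has
\[
\iota_K^\ast N_H^G \mM \cong \bigbox_{[\gamma]\in K\backslash G/H} N_{K\cap \gamma H\gamma^{-1}}^{K}\big(\iota^\ast c_\gamma \mM\big),
\]
which for Mackey functors follows from the spectrum-level statement after applying $\upi_0$, since $N$ of a representable is representable and everything in sight is $(-1)$-connected. I would apply this with $G=D_{2m}$, $H=D_2$, and $K$ equal to $D_{2k}$ or $\mu_k$ respectively, starting from $\m{R}_m = N_{D_2}^{D_{2m}}\mZ$.

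First I would treat the case $K=D_{2k}$ for $k\mid m$. The double cosets $D_{2k}\backslash D_{2m}/D_2$ are indexed by $\mu_k\backslash \mathbb{T}\cap D_{2m}$-type data; concretely, for $m,k$ odd the only double coset whose corresponding intersection $D_{2k}\cap \gamma D_2\gamma^{-1}$ is nontrivial is the one containing the identity, for which the intersection is $D_2$ itself, while all other double cosets give trivial intersection. Thus
\[
\iota_{D_{2k}}^\ast \m{R}_m \cong N_{D_2}^{D_{2k}}\mZ \mathbin{\square} \bigbox_{\text{free } [\gamma]} N_e^{D_{2k}}\iota_e^\ast\mZ.
\]
But $\iota_e^\ast \mZ$ is the Burnside Mackey functor $\mA^e = \mZ$ for the trivial group, so $N_e^{D_{2k}}\iota_e^\ast\mZ = \mA^{D_{2k}}$, which is the unit for $\square$. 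Hence the box product over the free double cosets collapses and we get $\iota_{D_{2k}}^\ast\m{R}_m \cong N_{D_2}^{D_{2k}}\mZ = \m{R}_k$. Alternatively, and perhaps more cleanly, one can observe directly that restriction of representables along $D_{2k}\hookrightarrow D_{2m}$ together with $\iota_{D_{2k}}^\ast(2-[D_{2m}/\mu_m]) = 2 - [D_{2k}/\mu_k]$ (a computation in the Burnside ring: the $D_{2m}$-set $D_{2m}/\mu_m$ restricts to the $D_{2k}$-set $D_{2k}/\mu_k$, as both are the ``sign'' orbit of order $2$) identifies $\iota_{D_{2k}}^\ast(\mA^{D_{2m}}/(2-[D_{2m}/\mu_m]))$ with $\mA^{D_{2k}}/(2-[D_{2k}/\mu_k]) = \m{R}_k$ via Theorem~\ref{thm:NormToDmofZ}.

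For the case $K=\mu_k$ I would use the same double coset formula: the double cosets $\mu_k\backslash D_{2m}/D_2$ all give intersection $\mu_k\cap \gamma D_2\gamma^{-1} = e$ since $\mu_k$ is contained in the rotation subgroup and every conjugate of $D_2$ is a reflection subgroup. Therefore
\[
\iota_{\mu_k}^\ast \m{R}_m \cong \bigbox_{[\gamma]\in \mu_k\backslash D_{2m}/D_2} N_e^{\mu_k}\iota_e^\ast\mZ \cong \mA^{\mu_k},
\]
again because $N_e^{\mu_k}$ of the Burnside Mackey functor for the trivial group is $\mA^{\mu_k}$, the unit for $\square$, so the (nonempty) box product is just one copy of $\mA^{\mu_k}$. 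Equivalently, in the presentation of Theorem~\ref{thm:NormToDmofZ}, $\iota_{\mu_k}^\ast[D_{2m}/\mu_m]$ is the class of the restricted $\mu_k$-set $D_{2m}/\mu_m|_{\mu_k}$, which has order $2$ with free $\mu_k$-action hence equals $2\cdot[\mu_k/\mu_k] = 2$ in $\mA^{\mu_k}$ (since $\mu_k$ acts freely on a set of size $2$ only when $k\mid 2$, i.e.\ $k=1$ as $k$ is odd — so in fact the restricted set is two fixed points when $k=1$; more carefully, $D_{2m}/\mu_m$ restricted to $\mu_k$ is $\mu_k$-free iff... — here I need to be careful). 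The honest statement is that $D_{2m}/\mu_m$ as a $\mu_k$-set is a two-element set on which $\mu_k$ acts through its quotient $\mu_k/\mu_k = e$ is wrong; rather $D_{2m}/\mu_m$ has $\mu_m$ acting trivially so $\mu_k\subseteq\mu_m$ acts trivially, giving two fixed points, so $\iota_{\mu_k}^\ast[D_{2m}/\mu_m] = 2$ and the relation $2 - [D_{2m}/\mu_m]$ restricts to $2-2 = 0$, whence $\iota_{\mu_k}^\ast\m{R}_m \cong \mA^{\mu_k}$.

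The main obstacle I anticipate is bookkeeping the Burnside-ring computation $\iota_K^\ast[D_{2m}/\mu_m]$ correctly in each case — in particular getting the double coset enumeration and the resulting $G$-set restrictions right — and making sure the multiplicative double coset formula for Mackey functors is invoked in a form justified by the earlier sections (Theorem~\ref{thm:doublecoset} and Proposition~\ref{charofdibar} give the spectrum-level version; passing to $\upi_0$ and using that norms of representables are representable, cf.\ Proposition~\ref{prop HR}, gives the Mackey-functor statement). Everything else is formal manipulation with the presentation in Theorem~\ref{thm:NormToDmofZ}.
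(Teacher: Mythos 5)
Both routes in your proposal are sound, and your ``alternative'' route is exactly the paper's proof. The paper writes $\m{R}_m$ as the coequalizer of multiplication by $2$ and by $[D_{2m}/\mu_m]$ on $\mA^{D_{2m}}$ (Theorem~\ref{thm:NormToDmofZ}), uses exactness of restriction, and computes $i_{D_{2k}}^\ast D_{2m}/\mu_m = D_{2k}/\mu_k$ (single double coset, $D_{2k}\cap\mu_m=\mu_k$) and $i_{\mu_m}^\ast D_{2m}/\mu_m = 2\cdot\mu_m/\mu_m$ (normality of $\mu_m$), whence the coequalizer presentation restricts as claimed. These are precisely the identifications you reach; your $\mu_k$ discussion wanders (free vs.\ trivial action) before landing on the correct statement that $\mu_k\subset\mu_m$ acts trivially on $D_{2m}/\mu_m$, but the conclusion is right.

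Your first route, applying the Mackey-functor multiplicative double coset formula to $i_K^\ast N_{D_2}^{D_{2m}}\mZ$, is a genuine alternative and your double coset count is correct: for $K=D_{2k}$ with $m$ odd and $k\mid m$, exactly one class in $D_{2k}\backslash D_{2m}/D_2$ has stabilizer $D_2$ (contributing $N_{D_2}^{D_{2k}}\mZ$) and the rest are free, each contributing $N_e^{D_{2k}}\mathbb{Z}=\mA^{D_{2k}}$, the $\square$-unit, which drops out; for $K=\mu_k$ every class is free, so the whole box product collapses to $\mA^{\mu_k}$. This bypasses Theorem~\ref{thm:NormToDmofZ} entirely, which is appealing, but it needs the Mackey-level double coset formula as an input, which the paper only invokes informally (in the proof of Lemma~\ref{z2todpknormofz}); the paper's route is a two-line Burnside-ring restriction once the presentation of $\m{R}_m$ is in hand, so it is the leaner argument in context.
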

\begin{proof}
    Theorem~\ref{thm:NormToDmofZ} writes the norm $\m{R}_m$ as the coequalizer of
    \[
        \begin{tikzcd}
            {\mA^{D_{2m}}}
                \ar[r, shift left=.5em, "2"]
                \ar[r, shift right=.5em, "{[D_{2m}/\mu_m]}"']
                &
            {\mA^{D_{2m}}.}
        \end{tikzcd}
    \]
    Since the restriction functor on Mackey functors is exact, for any subgroup \(H\), the restriction of the norm is the coequalizer of \(2\) and 
    \[
        \res_{H}^{D_{2m}}[D_{2m}/\mu_m]=[i_H^\ast D_{2m}/\mu_m].
    \]
    When \(H=D_{2k}\), we have 
    \[
        i_{D_{2k}}^\ast D_{2m}/\mu_m=D_{2k}/\mu_k,
    \]
    since there is a single double coset and \(D_{2k}\cap\mu_m=\mu_k\). This gives the first part.
    
    When \(H=\mu_m\), we have  
    \[
        i_{\mu_m}^\ast D_{2m}/\mu_m=2\mu_m/\mu_m,
    \]
    since \(\mu_m\) is normal. This implies that the restriction to \(\mu_m\) is \(\mA^{\mu_m}\), and hence the restriction to \(\mu_k\) is \(\mA^{\mu_k}\).
\end{proof}

Since we are coequalizing two maps from the Burnside Mackey functor to itself, the value at \(D_{2m}/D_{2m}\) can also be readily computed. We need a small lemma about the products of certain \(D_{2m}\)-orbits.

\begin{prop}\label{prop: effect of two maps on basis}
    Let \(m\) be an odd natural number. Let \(H\subset D_{2m}\) be a subgroup, and let \(\ell=\gcd(m,|H|)\). Then we have
    \[
        D_{2m}/H\times D_{2m}/\mu_m\cong \begin{cases}
            D_{2m}/\mu_{\ell} & |H|\text{ even},\\
            D_{2m}/H\amalg D_{2m}/H & |H|\text{ odd}.
        \end{cases}
    \]
\end{prop}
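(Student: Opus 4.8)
The plan is to reduce the computation of $D_{2m}/H \times D_{2m}/\mu_m$ to a double coset count, since for finite $G$-sets one has the general formula
\[
G/H \times G/K \;\cong\; \coprod_{[\gamma]\in H\backslash G/K} G/(H\cap \gamma K\gamma^{-1}).
\]
Here $G = D_{2m}$, $K = \mu_m$ is normal, so $\gamma K\gamma^{-1} = \mu_m$ for all $\gamma$ and the summands are all $G/(H\cap\mu_m)$. So the whole content is: (i) identify $H\cap \mu_m$, and (ii) count $|H\backslash D_{2m}/\mu_m|$, which since $\mu_m$ is normal equals $|H\backslash D_{2m}/\mu_m| = |H / (H\cap\mu_m)| $ is \emph{not} quite it — rather, because $\mu_m$ is normal, $H\backslash D_{2m}/\mu_m = H\backslash (D_{2m}/\mu_m) = (H/(H\cap\mu_m))\backslash (D_{2m}/\mu_m)$, and $D_{2m}/\mu_m \cong D_2$ has order $2$, so the number of orbits is $1$ if $H$ surjects onto $D_{2m}/\mu_m$ (equivalently $H\not\subset\mu_m$, i.e.\ $|H|$ even, using that $m$ is odd so every subgroup of $D_{2m}$ has order dividing $2m$ and the odd-order subgroups are exactly those contained in $\mu_m$) and $2$ otherwise.

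First I would record the classification of subgroups of $D_{2m}$ for $m$ odd: a subgroup $H$ is either $\mu_d$ for $d\mid m$ (these are exactly the subgroups of odd order, all contained in $\mu_m$) or a dihedral subgroup $D_{2d}$ for $d\mid m$ (conjugate to the standard one, of even order $2d$, with $D_{2d}\cap\mu_m = \mu_d$). In both cases $\ell := \gcd(m,|H|)$ satisfies $H\cap\mu_m = \mu_\ell$: when $H=\mu_d$, $|H|=d$, $\gcd(m,d)=d$ and $H\cap\mu_m = \mu_d$; when $H = D_{2d}$, $|H| = 2d$, $\gcd(m,2d) = d$ since $m$ is odd, and $H\cap\mu_m = \mu_d$. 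Next I would apply the general $G/H\times G/K$ formula with $K=\mu_m$ normal, so every double coset contributes a copy of $D_{2m}/\mu_\ell$, and the number of double cosets is $|H\backslash (D_{2m}/\mu_m)|$. Since $D_{2m}/\mu_m$ has exactly two elements (the two cosets of $\mu_m$, swapped by $\tau$), $H$ acts on this two-element set through its image in $D_{2m}/\mu_m$; that image is trivial iff $H\subset\mu_m$ iff $|H|$ is odd. Hence there is one double coset when $|H|$ is even and two when $|H|$ is odd, giving
\[
D_{2m}/H\times D_{2m}/\mu_m \cong \begin{cases} D_{2m}/\mu_\ell & |H| \text{ even},\\ D_{2m}/\mu_\ell \amalg D_{2m}/\mu_\ell & |H|\text{ odd}.\end{cases}
\]
Finally, when $|H|$ is odd we have $H = \mu_\ell$ itself (as $\ell = \gcd(m,|H|) = |H|$ forces $H=\mu_\ell$), so $D_{2m}/\mu_\ell = D_{2m}/H$, which matches the claimed $D_{2m}/H\amalg D_{2m}/H$.

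I do not expect a genuine obstacle here; the only points requiring care are (a) correctly using that $m$ odd forces $\gcd(m,2d)=\gcd(m,d)$ and forces every odd-order subgroup into $\mu_m$, and (b) the bookkeeping that in the odd case $\mu_\ell$ literally equals $H$ so the two phrasings agree. An alternative, perhaps cleaner, route is to fix the splitting $D_{2m}/\mu_m \cong D_2$ and note $D_{2m}/\mu_m$-sets pull back to $D_2$-sets: $D_{2m}/\mu_m$ as a $D_{2m}$-set is inflated from the regular $D_2$-set, $D_{2m}/H$ restricted and then... actually the direct double-coset computation above is the shortest path, so that is the one I would write up.
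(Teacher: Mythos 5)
Your proof is correct, and it is essentially the same argument the paper gives: the paper invokes the Frobenius isomorphism $D_{2m}/H\times T\cong D_{2m}\times_H i_H^\ast T$ and then cites its earlier computations of $i_H^\ast(D_{2m}/\mu_m)$, whereas you unpack this directly into the $G/H\times G/K$ double-coset decomposition. In both cases the normality of $\mu_m$ collapses all summands to $D_{2m}/\mu_\ell$ and reduces the orbit count to the $H$-action on the two-element set $D_{2m}/\mu_m$, with parity of $|H|$ deciding whether that action is trivial.
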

\begin{proof}
    Since \(m\) is odd, any subgroup \(H\) is conjugate to either \(D_{2k}\) or \(\mu_k\), for \(k\) dividing \(m\), and the two cases are distinguished by the parity of the cardinality. Hence \(D_{2m}/H\) is isomorphic to either \(D_{2m}/D_{2k}\) or to \(D_{2m}/\mu_k\) in exactly the two cases in the statement. The result follows from the isomorphism
    \[
        D_{2m}/H\times D_{2m}/\mu_m\cong D_{2m}\timesover{H} i_H^\ast D_{2m}/\mu_m,
    \]
    and our earlier analysis of the restrictions.
\end{proof}

\begin{cor}
    For any odd \(m\), \(\m{R}_{m}(\ast)\) is a free abelian group:
    \[
        \m{R}_m(\ast)\cong\mathbb Z\big\{[D_{2m}/D_{2k}]\mid k\vert m\big\}.
    \]
    The image of \([D_{2m}/D_{2k}]\in\mA^{D_{2m}}(\ast)\) is \([D_{2m}/D_{2k}]\), while the image of \([D_{2m}/\mu_k]\in\mA^{D_{2m}}(\ast)\) is \(2[D_{2m}/D_{2k}]\).
\end{cor}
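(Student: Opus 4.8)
The plan is to evaluate the coequalizer presentation of $\m{R}_m$ from Theorem~\ref{thm:NormToDmofZ} at the top level $\ast=D_{2m}/D_{2m}$ and compute the resulting cokernel of abelian groups. Coequalizers of Mackey functors are computed objectwise, so $\m{R}_m(\ast)$ is the coequalizer of the two homomorphisms $\mA^{D_{2m}}(\ast)\rightrightarrows \mA^{D_{2m}}(\ast)$ obtained by evaluating the maps labelled $2$ and $[D_{2m}/\mu_m]$ in that presentation. Writing $A(D_{2m})=\mA^{D_{2m}}(\ast)$ for the Burnside ring, these two maps are multiplication by $2$ and multiplication by $[D_{2m}/\mu_m]$; hence $\m{R}_m(\ast)$ is the quotient of $A(D_{2m})$ by the image of the homomorphism $\varphi\colon A(D_{2m})\to A(D_{2m})$ given by $\varphi(x)=2x-[D_{2m}/\mu_m]\cdot x$.

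First I would fix a $\mathbb{Z}$-basis of $A(D_{2m})$. Since $m$ is odd, every subgroup of $D_{2m}$ is conjugate to exactly one of $\mu_k$ or $D_{2k}$ for a divisor $k\mid m$, and for each fixed $k$ all copies of $D_{2k}$ are conjugate (conjugation by rotations moves a reflection by an even power, and $2$ is a unit modulo $m$); so $A(D_{2m})$ is free abelian on $\{[D_{2m}/\mu_k]:k\mid m\}\cup\{[D_{2m}/D_{2k}]:k\mid m\}$. Next I would compute $\varphi$ on this basis using Proposition~\ref{prop: effect of two maps on basis}. Because $m$ is odd, $\gcd(m,2k)=k$ and $\gcd(m,k)=k$, so that proposition gives $[D_{2m}/D_{2k}]\cdot[D_{2m}/\mu_m]=[D_{2m}/\mu_k]$ and $[D_{2m}/\mu_k]\cdot[D_{2m}/\mu_m]=2[D_{2m}/\mu_k]$. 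Therefore $\varphi([D_{2m}/D_{2k}])=2[D_{2m}/D_{2k}]-[D_{2m}/\mu_k]$ while $\varphi([D_{2m}/\mu_k])=0$, and the image of $\varphi$ is precisely the subgroup generated by the elements $2[D_{2m}/D_{2k}]-[D_{2m}/\mu_k]$ for $k\mid m$.

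Finally I would read off the quotient: the relations $[D_{2m}/\mu_k]=2[D_{2m}/D_{2k}]$ eliminate the generators $[D_{2m}/\mu_k]$ one at a time and leave no relation among the surviving generators $[D_{2m}/D_{2k}]$, so $\m{R}_m(\ast)$ is free abelian on $\{[D_{2m}/D_{2k}]:k\mid m\}$, the image of $[D_{2m}/D_{2k}]$ is the corresponding basis element, and the image of $[D_{2m}/\mu_k]$ is $2[D_{2m}/D_{2k}]$, as claimed. I expect no essential obstacle here; the only points that need care are the justification that the value of the Tambara-functor quotient at $\ast$ really is the objectwise Mackey-functor coequalizer — which is exactly the structure already exploited in the proofs of Lemma~\ref{z2todpknormofz} and Theorem~\ref{thm:NormToDmofZ} — and the bookkeeping of conjugacy classes of subgroups of $D_{2m}$ together with the $\gcd$ computations feeding Proposition~\ref{prop: effect of two maps on basis}.
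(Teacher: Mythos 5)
Your proposal is correct and takes essentially the same approach as the paper: evaluate the coequalizer presentation of $\m{R}_m$ from Theorem~\ref{thm:NormToDmofZ} at $\ast$, use the basis of $A(D_{2m})$ indexed by conjugacy classes of subgroups (which, since $m$ is odd, are all $\mu_k$ or $D_{2k}$ for $k\mid m$), and apply Proposition~\ref{prop: effect of two maps on basis} to compute $(2-[D_{2m}/\mu_m])\cdot[D_{2m}/\mu_k]=0$ and $(2-[D_{2m}/\mu_m])\cdot[D_{2m}/D_{2k}]=2[D_{2m}/D_{2k}]-[D_{2m}/\mu_k]$, from which the free module structure and the images of the basis elements follow. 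You simply spell out a few of the bookkeeping steps (identification of conjugacy classes, objectwise computation of the coequalizer) that the paper leaves implicit.
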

\begin{proof}
    Proposition~\ref{prop: effect of two maps on basis} describes the effect of the two maps on the standard basis for the Burnside ring. We see that 
    \[
        \big(2-[D_{2m}/\mu_m]\big)\cdot [D_{2m}/\mu_k]=0,
    \]
    while
    \[
        \big(2-[D_{2m}/\mu_m]\big)\cdot [D_{2m}/D_{2k}]=2[D_{2m}/D_{2k}]-[D_{2m}/\mu_k].
    \]
    This gives both the additive result and the images.
\end{proof}

Lemma~\ref{lem: restriction of the norm of Z} shows then that the same statement is essentially true for the values at dihedral subgroups.

\begin{cor}
    For any odd \(m\) and any \(k\) dividing \(m\), we have an isomorphism
    \[
        \m{R}_m(D_{2m}/D_{2k})\cong \mathbb Z\big\{ [D_{2k}/D_{2j}]\mid j\vert k\big\}.
    \]
\end{cor}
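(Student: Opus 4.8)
The plan is to deduce this statement directly from the two results immediately preceding it, together with the standard behavior of restriction of Mackey functors. The key observation is that for any finite group $G$, any subgroup $H \le G$, and any $G$-Mackey functor $\m{M}$, one has a natural identification
\[
    (\iota_H^*\m{M})(H/H) \cong \m{M}(G/H),
\]
since restriction along $H \le G$ is precomposition with the functor of Burnside categories induced by $G\times_H(-)$, which carries $H/H$ to $G/H$. So the value of a Mackey functor at $G/H$ is recovered as the value at the terminal orbit of its restriction to $H$.

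Applying this with $G = D_{2m}$, $H = D_{2k}$, and $\m{M} = \m{R}_m$ gives
\[
    \m{R}_m(D_{2m}/D_{2k}) \cong \big(\iota_{D_{2k}}^*\m{R}_m\big)(D_{2k}/D_{2k}).
\]
By Lemma~\ref{lem: restriction of the norm of Z}, $\iota_{D_{2k}}^*\m{R}_m \cong \m{R}_k$, so the right-hand side is $\m{R}_k(D_{2k}/D_{2k}) = \m{R}_k(\ast)$. Now $k$ divides the odd integer $m$, hence $k$ is itself odd, so the preceding corollary (freeness of $\m{R}_k(\ast)$) applies at level $k$ and yields $\m{R}_k(\ast)\cong \mathbb Z\big\{[D_{2k}/D_{2j}] \mid j\vert k\big\}$. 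Composing these isomorphisms gives the claim, and if desired one can make the final isomorphism explicit by tracking the standard Burnside-ring basis $\{[D_{2k}/D_{2j}]\}_{j\vert k}$ through the quotient defining $\m{R}_k$, which is precisely what the earlier corollary records.

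There is essentially no obstacle here: the argument is a bookkeeping chain linking Lemma~\ref{lem: restriction of the norm of Z}, the base identity for restricted Mackey functors, and the freeness corollary. The only point meriting an explicit remark is the verification that $k$ is odd so that the freeness result is applicable, which is immediate from $k \mid m$ with $m$ odd.
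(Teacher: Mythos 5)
Your proof is correct and follows the same route as the paper: the paper derives this corollary directly from Lemma~\ref{lem: restriction of the norm of Z} (that $\iota_{D_{2k}}^*\m{R}_m\cong\m{R}_k$) together with the freeness statement for $\m{R}_k(\ast)$ established just above. Your inclusion of the observation that $k\mid m$ with $m$ odd forces $k$ odd, so that the freeness corollary applies, is a useful explicit check that the paper leaves implicit.
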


We can also spell out the restriction and transfer maps here. The restriction and transfer to the odd order cyclic subgroups is easier, since there is a unique maximal one.

\begin{prop}
    The restriction map
    \[
        \m{R}_m(*)\to \m{R}_m(D_{2m}/\mu_m)\cong \mA^{\mu_m}(\mu_m/\mu_m)
    \]
    is given by
    \[
        [D_{2m}/D_{2k}]\mapsto [\mu_m/\mu_k].
    \]
    The transfer map is given by
    \[
        [\mu_m/\mu_k]\mapsto 2[D_{2m}/D_{2k}].
    \]
\end{prop}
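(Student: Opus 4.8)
The plan is to derive both formulas directly from the presentation $\m{R}_m\cong\mA^{D_{2m}}/(2-[D_{2m}/\mu_m])$ of Theorem~\ref{thm:NormToDmofZ}, together with the identifications already established: the preceding corollary computing $\m{R}_m(*)$ and $\m{R}_m(D_{2m}/D_{2k})$, and Lemma~\ref{lem: restriction of the norm of Z}, which supplies $i_{\mu_m}^\ast\m{R}_m\cong\mA^{\mu_m}$. Since $\m{R}_m$ is a quotient of the Burnside Mackey functor \emph{as a Mackey functor}, its restriction and transfer maps are induced from those of $\mA^{D_{2m}}$; so it suffices to compute $\res_{\mu_m}^{D_{2m}}$ and $\tr_{\mu_m}^{D_{2m}}$ in $\mA^{D_{2m}}$ on the classes $[D_{2m}/D_{2k}]$ and $[\mu_m/\mu_k]$ and then push the answers forward along the quotient map.

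First I would handle the restriction. In $\mA^{D_{2m}}$ the map $\mA^{D_{2m}}(*)\to\mA^{D_{2m}}(D_{2m}/\mu_m)=\mA^{\mu_m}(\mu_m/\mu_m)$ sends $[D_{2m}/H]$ to $[i_{\mu_m}^\ast D_{2m}/H]$. For $H=D_{2k}$, the double coset formula gives $i_{\mu_m}^\ast D_{2m}/D_{2k}\cong\coprod \mu_m/(\mu_m\cap\gamma D_{2k}\gamma^{-1})$, and since $m$ is odd every conjugate of $D_{2k}$ meets $\mu_m$ in $\mu_k$, so each summand is $\mu_m/\mu_k$; comparing cardinalities, $|D_{2m}/D_{2k}|=m/k=|\mu_m/\mu_k|$, shows there is exactly one summand, i.e. $i_{\mu_m}^\ast D_{2m}/D_{2k}\cong\mu_m/\mu_k$. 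Passing to the quotient and using that the isomorphism $i_{\mu_m}^\ast\m{R}_m\cong\mA^{\mu_m}$ of Lemma~\ref{lem: restriction of the norm of Z} is the inverse of the quotient map $\mA^{\mu_m}\to i_{\mu_m}^\ast\m{R}_m$ (an isomorphism because $i_{\mu_m}^\ast(2-[D_{2m}/\mu_m])=2-2[\mu_m/\mu_m]=0$), we get that the restriction in $\m{R}_m$ is $[D_{2m}/D_{2k}]\mapsto[\mu_m/\mu_k]$.

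For the transfer I would run the analogous computation: in $\mA^{D_{2m}}$ the transfer $\tr_{\mu_m}^{D_{2m}}$ sends the $\mu_m$-set $\mu_m/\mu_k$ to $D_{2m}\times_{\mu_m}\mu_m/\mu_k\cong D_{2m}/\mu_k$. The preceding corollary (via Proposition~\ref{prop: effect of two maps on basis}, which gives $[D_{2m}/D_{2k}]\cdot[D_{2m}/\mu_m]\cong[D_{2m}/\mu_k]$) records that the image of $[D_{2m}/\mu_k]\in\mA^{D_{2m}}(*)$ in $\m{R}_m(*)$ is $2[D_{2m}/D_{2k}]$. Hence the transfer in $\m{R}_m$ is $[\mu_m/\mu_k]\mapsto 2[D_{2m}/D_{2k}]$, as claimed.

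The argument is essentially bookkeeping, and the only step I would treat with care is confirming that the identification $i_{\mu_m}^\ast\m{R}_m\cong\mA^{\mu_m}$ used to state the proposition is precisely the quotient-induced one, so that the symbol $[\mu_m/\mu_k]$ in the target of the restriction really corresponds to the honest $\mu_m$-orbit $[\mu_m/\mu_k]$; once this is pinned down, both formulas fall out of the corresponding elementary computations in the Burnside Mackey functor.
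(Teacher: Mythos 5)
Your proposal is correct and follows exactly the same route as the paper's one-line proof (compute the restriction and induction of orbits in $\mA^{D_{2m}}$, then invoke the relation $[D_{2m}/\mu_k]=2[D_{2m}/D_{2k}]$ in $\m{R}_m$); you have simply spelled out the orbit-counting details the paper leaves implicit and confirmed that the identification $i_{\mu_m}^\ast\m{R}_m\cong\mA^{\mu_m}$ is the one induced by the quotient.
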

\begin{proof}
    These follow from the restriction and induction in \(D_{2m}\)-sets, together with the relation
    \[
        [D_{2m}/\mu_k]=2[D_{2m}/D_{2k}]
    \]
    in $\m{R}_m$.
\end{proof}

For the restrictions and transfers to dihedral subgroups, we consider a maximal proper divisor. Let \(p\) be a prime dividing \(m\), and let \(k=m/p\).

\begin{prop}\label{prop: res and tr}
    The restriction map
    \[
        \m{R}_m(*)\to \m{R}_k(*)
    \]
    is given by
    \[
        [D_{2m}/D_{2j}]\mapsto \frac{p\ell}{j}[D_{2k}/D_{2\ell}],
    \]
    where \(\ell=\gcd(k,j)\). The transfer maps are given by
    \[
        [D_{2k}/D_{2j}]\mapsto [D_{2m}/D_{2j}].
    \]
\end{prop}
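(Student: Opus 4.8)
The plan is to transport both structure maps along the quotient $\mA^{D_{2m}}\twoheadrightarrow\m{R}_m$ of Theorem~\ref{thm:NormToDmofZ}: a quotient of Mackey functors is compatible with all restrictions and transfers, and by Lemma~\ref{lem: restriction of the norm of Z} the restriction of $\m{R}_m$ to $D_{2k}$ is $\m{R}_k$, so it suffices to carry out the corresponding orbit computations in the Burnside Mackey functor and then pass to the quotient. In $\mA^{D_{2m}}$ the restriction along $D_{2k}\hookrightarrow D_{2m}$ sends $[T]\mapsto[\iota_{D_{2k}}^{*}T]$ and the transfer sends a $D_{2k}$-set $[S]\mapsto[D_{2m}\times_{D_{2k}}S]$. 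The only algebra needed in $\m{R}_k$ is the relation $[D_{2k}/\mu_\ell]=2[D_{2k}/D_{2\ell}]$ for $\ell\mid k$, which is the instance $H=D_{2\ell}$ of Proposition~\ref{prop: effect of two maps on basis} together with $[D_{2k}/\mu_k]=2$.

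The transfer is then immediate: $D_{2m}\times_{D_{2k}}(D_{2k}/D_{2j})\cong D_{2m}/D_{2j}$, and $[D_{2m}/D_{2j}]$ is one of the free-abelian-group generators of $\m{R}_m(\ast)$ described above, so $[D_{2k}/D_{2j}]\mapsto[D_{2m}/D_{2j}]$.

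For the restriction I would expand $\iota_{D_{2k}}^{*}(D_{2m}/D_{2j})$ via the classical double coset formula as $\coprod_{[\gamma]\in D_{2k}\backslash D_{2m}/D_{2j}}D_{2k}/\bigl(D_{2k}\cap\gamma D_{2j}\gamma^{-1}\bigr)$ and identify the stabilizers. Since $\mu_j$ is characteristic in $\mu_m$, hence normal in $D_{2m}$, every conjugate $\gamma D_{2j}\gamma^{-1}$ still contains $\mu_j$, so the rotation part of $D_{2k}\cap\gamma D_{2j}\gamma^{-1}$ is exactly $\mu_k\cap\mu_j=\mu_\ell$ with $\ell=\gcd(j,k)$; as such a subgroup contains at most one reflection, each stabilizer is either $D_{2\ell}$ or $\mu_\ell$, and the corresponding orbit is $D_{2k}/D_{2\ell}$ or $D_{2k}/\mu_\ell$ (conjugate subgroups of $D_{2k}$ give isomorphic orbits, using again that $k$ is odd). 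Thus $\iota_{D_{2k}}^{*}(D_{2m}/D_{2j})\cong\bigl(\coprod^{a}D_{2k}/D_{2\ell}\bigr)\amalg\bigl(\coprod^{b}D_{2k}/\mu_\ell\bigr)$ for some $a,b\ge0$, and comparing cardinalities gives $(a+2b)\tfrac{k}{\ell}=\tfrac{m}{j}=\tfrac{pk}{j}$, hence $a+2b=\tfrac{p\ell}{j}$ (automatically a nonnegative integer). Passing to $\m{R}_k(\ast)$ and applying $[D_{2k}/\mu_\ell]=2[D_{2k}/D_{2\ell}]$ collapses the image to $(a+2b)[D_{2k}/D_{2\ell}]=\tfrac{p\ell}{j}[D_{2k}/D_{2\ell}]$, which is the claimed formula; notice the individual values of $a$ and $b$ are never needed, only the combination $a+2b$ pinned down by the cardinality count.

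The one genuinely nontrivial step is the stabilizer computation for the restriction, i.e.\ showing each $D_{2k}\cap\gamma D_{2j}\gamma^{-1}$ is cyclic or dihedral of the predicted order $\ell$ or $2\ell$. This is exactly where the standing hypothesis that $m$ is odd is used: it makes the subgroup lattice of $D_{2m}$ (up to conjugacy) consist only of the $\mu_d$ and $D_{2d}$ for $d\mid m$ and forces the reflections to behave uniformly, ruling out any exotic intersections. Once the orbit types are in hand, the rest is the cardinality bookkeeping above and the one relation from Proposition~\ref{prop: effect of two maps on basis}.
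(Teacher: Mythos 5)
Your proof is correct and follows the same strategy as the paper's: expand $\iota_{D_{2k}}^{\ast}(D_{2m}/D_{2j})$ by the double coset formula, identify the possible stabilizers as $\mu_\ell$ or conjugates of $D_{2\ell}$ (using that $m$ odd forces the rotation part to be exactly $\mu_\ell$ and that all order-$2\ell$ dihedral subgroups of $D_{2k}$ are conjugate), and then collapse via $[D_{2k}/\mu_\ell]=2[D_{2k}/D_{2\ell}]$ in $\m{R}_k$. The only variation is that you avoid pinning down the exact multiplicities of each orbit type — the paper shows the $D_{2k}/D_{2\ell}$-orbit occurs exactly once via a normalizer argument and computes the number of $D_{2k}/\mu_\ell$-orbits explicitly — by observing that the image depends only on $a+2b$, which is already determined by a cardinality count; this is a mild but genuine simplification.
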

\begin{proof}
    The transfer maps are immediate. For the restriction, since \(m\) is odd, the normalizer of any dihedral subgroup is itself. The intersection of \(D_{2k}\) with \(D_{2j}\) is the dihedral group \(D_{2\ell}\), while the intersection of \(D_{2k}\) with any conjugate of \(D_{2j}\) is just the intersection \(\mu_j\cap\mu_k=\mu_\ell\). This means that it suffices to 
    count cardinalities. This gives
    \[
        i_{D_{2k}}^{\ast} D_{2m}/D_{2j}=D_{2k}/D_{2\ell}\amalg \coprod^{a} D_{2k}/\mu_\ell,
    \]
    where \(a=\frac{p\ell-j}{2j}\).
    Since \([D_{2k}/\mu_{\ell}]=2[D_{2k}/D_{2\ell}]\), the result follows.
\end{proof}

\begin{thm}\label{propz}
Let $m\ge 1$ be an odd integer. There is an isomorphism of $D_{2m}$-Mackey functors
\[ \underline{\pi}_0^{D_{2m}}\THR(H\mZ)\cong \mA^{D_{2m}}/(2-[D_{2m}/\mu_{m}]).\]
where $(2-[D_{2m}/\mu_m])$ is the ideal generated by $2-[D_{2m}/\mu_m]$ in the Tambara functor $\mA^{D_{2m}}$. 
\end{thm}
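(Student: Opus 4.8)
The plan is to combine the identification of the $\pi_0$ Mackey functor of $\THR$ with a relative box product, which was established earlier in the excerpt, with the explicit computation of the Mackey functor norm $N_{D_2}^{D_{2m}}\mZ$ obtained above in Theorem~\ref{thm:NormToDmofZ}. First I would recall from Theorem~\ref{thm:linearization} (combined with Example~\ref{exm: pi0 of Esigma alg}, noting $\underline{\pi}_0^{D_2}(H\mZ)=\mZ$) that the linearization map is an isomorphism in degree zero, so that $\underline{\pi}_0^{D_{2m}}\THR(H\mZ)\cong \HR_0^{D_{2m}}(\mZ)$. Then Proposition~\ref{prop:HRBox} identifies this as the relative box product
\[
\HR_0^{D_{2m}}(\mZ)\cong N_{D_2}^{D_{2m}}\mZ \,\square_{N_e^{D_{2m}}\iota_e^*\mZ}\, N_{\zeta D_2\zeta^{-1}}^{D_{2m}}c_{\zeta}\mZ.
\]

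The key simplification is that $\mZ$ is commutative — indeed it is a Tambara functor — so all the norms in sight are norms of a commutative object and there is nothing ``twisted'' to worry about: $c_{\zeta}\mZ\cong \mZ$ as $\zeta D_2\zeta^{-1}$-Tambara functors (since $c_\zeta$ is symmetric monoidal and $\mZ$ is the initial cohomological Tambara functor, pulled back from the terminal group), and both $N_{D_2}^{D_{2m}}\mZ$ and $N_{\zeta D_2\zeta^{-1}}^{D_{2m}}c_\zeta\mZ$ are, by Theorem~\ref{thm:NormToDmofZ} applied to the two conjugate copies of $D_2$ inside $D_{2m}$, isomorphic to $\mA^{D_{2m}}/(2-[D_{2m}/\mu_m])$. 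Since $\mZ$ is a Tambara (hence Green) functor, $N_{D_2}^{D_{2m}}\mZ$ is a $D_{2m}$-Tambara functor by Lemma~\ref{lem: HR0 of Tambara is Tambara} / the discussion around Theorem~\ref{thm:NormToDmofZ}, and as a quotient of the Burnside Tambara functor $\mA^{D_{2m}}$ it has a \emph{unique} commutative ring structure; in particular, the module structures $\psi_L,\psi_R$ of Definition~\ref{def:twistedMackeymodule} on $N_{\zeta D_2\zeta^{-1}}^{D_{2m}}c_\zeta\mZ$ over $N_e^{D_{2m}}\iota_e^*\mZ$ are the canonical ones coming from the ring map $N_e^{D_{2m}}\iota_e^*\mZ\to N_{\zeta D_2\zeta^{-1}}^{D_{2m}}c_\zeta\mZ$, so the relative box product is genuinely the pushout of commutative Green functors.

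Next I would carry out the pushout. Both factors $N_{D_2}^{D_{2m}}\mZ$ and $N_{\zeta D_2\zeta^{-1}}^{D_{2m}}c_\zeta\mZ$ receive compatible maps from $N_e^{D_{2m}}\iota_e^*\mZ$, and each is a quotient of $\mA^{D_{2m}}$ by the ideal generated by $2-[D_{2m}/\mu_m]$. The relative box product $\square$ over $N_e^{D_{2m}}\iota_e^*\mZ$ of two quotients of $\mA^{D_{2m}}$ (with compatible unit maps from a Green functor mapping to $\mA^{D_{2m}}$) is computed as a quotient of $\mA^{D_{2m}}$ by the sum of the two ideals — but here both ideals coincide, namely the ideal $(2-[D_{2m}/\mu_m])$. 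Concretely: $N_{D_2}^{D_{2m}}\mZ$ is the initial Tambara functor under $\mA^{D_{2m}}$ in which $2=[D_{2m}/\mu_m]$ holds, so its box product with any other such quotient over a common base, where the base maps into $\mA^{D_{2m}}$, is again $\mA^{D_{2m}}/(2-[D_{2m}/\mu_m])$. One should check that the base change map $N_e^{D_{2m}}\iota_e^*\mZ\to\mA^{D_{2m}}\to \mA^{D_{2m}}/(2-[D_{2m}/\mu_m])$ factors through both sides identically, which follows from naturality of the norm and the fact that $N_e^{D_{2m}}\iota_e^*\mZ = N_e^{D_{2m}}\mathbb{Z} = \mA^{D_{2m}}$. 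Hence the relative box product collapses to $\mA^{D_{2m}}/(2-[D_{2m}/\mu_m])$ as a $D_{2m}$-Mackey functor, which is the claim. Finally I would note that the earlier corollaries (the explicit description of $\m{R}_m(*)$, the restrictions and transfers in Propositions~\ref{prop: res and tr} etc.) make this Mackey functor completely explicit, so no further computation is needed; they serve as a consistency check that the resulting quotient agrees on $D_{2p^\infty}$-subquotients with the calculation of \cite{DMP19}.

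The main obstacle I expect is the second step: verifying carefully that the relative box product of the two norm Green functors over $N_e^{D_{2m}}\iota_e^*\mZ$ really does compute as a single quotient of $\mA^{D_{2m}}$, i.e. that the two module structures are the evident ones and that $\square$ of quotients behaves like the tensor product of quotient rings. This requires knowing that $N_e^{D_{2m}}\iota_e^*\mZ\cong\mA^{D_{2m}}$ (so the ``base'' already surjects onto everything), and that box product of Green functors is right exact and takes $\mA^{D_{2m}}$ to the identity — both of which are standard but worth stating. Everything else — the passage through $\HR_0$, the commutativity/uniqueness of the Tambara structure, the identification of the norm via Theorem~\ref{thm:NormToDmofZ} — is essentially bookkeeping on top of results already proved in the paper.
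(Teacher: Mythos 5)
Your proposal is correct and follows essentially the same route as the paper's proof: both use Theorem~\ref{thm:linearization} and Proposition~\ref{prop:HRBox} to reduce to the relative box product, observe that $N_e^{D_{2m}}\iota_e^*\mZ\cong\mA^{D_{2m}}$ is the unit, identify both norm factors with the quotient $\mA^{D_{2m}}/(2-[D_{2m}/\mu_m])$ via Theorem~\ref{thm:NormToDmofZ} (and conjugation-invariance of that computation), and collapse the coequalizer. The only cosmetic difference is that you phrase the collapse as ``box product of quotients over a surjective base equals quotient by the sum of ideals,'' whereas the paper says the unit $\mA^{D_{2m}}\to N_{D_2}^{D_{2m}}\mZ$ being surjective makes $N_{D_2}^{D_{2m}}\mZ$ a solid Green functor; these are equivalent formulations of the same observation.
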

\begin{proof}
By Proposition~\ref{prop:HRBox} and Theorem~\ref{thm:linearization}, it suffices to compute the coequalizer 
\[ 
	\xymatrix{ 
		N_{D_2}^{D_{2m}}\mZ \square N_{e}^{D_{2m}}\mathbb{Z} \square N_{\zeta D_2\zeta^{-1}}^{D_{2m}} c_{\zeta}\mZ  \ar@<.5ex>[r] \ar@<-.5ex>[r] & N_{D_2}^{D_{2m}}\mZ\square N_{\zeta D_2\zeta^{-1}}^{D_{2m}} c_{\zeta}\mZ.
		}
\]
For any \(G\), \(N_e^G\mathbb{Z}\) is the Burnside Mackey functor, the symmetric monoidal unit. The \(E_0\)-structure map here is just the unit 
\[
\mA^{D_{2m}}\to N_{D_2}^{D_{2m}}\mZ.
\]
By Theorem~\ref{thm:NormToDmofZ}, this is surjective, so \(N_{D_2}^{D_{2m}}\mZ\) is a ``solid'' Green functor in the sense that the multiplication map is an isomorphism. Finally, note that the argument we gave to identify \(N_{D_2}^{D_{2m}}\mZ\) did not depend on the choice of \(D_2\) inside \(D_{2m}\), so we have an isomorphism
\[
N_{D_2}^{D_{2m}} \mZ\cong N_{\zeta D_2\zeta^{-1}}^{D_{2m}} \mZ
\]
of Tambara functors. We deduce that all pieces in the coequalizer diagram are just \(N_{D_2}^{D_{2m}}\mZ\).
\end{proof}
When restricted to $\upi_0^{D_2}(\THR(H\m{\mathbb{Z}})^{D_{2p^k}})$, our computation recovers the computation in~\cite{DMP22}. 
 \begin{cor}Let $p$ be an odd prime. 
 Then there are isomorphisms of abelian groups
 \[ \m{\pi}_0^{D_{2p^k}}(\THR(H\mZ))(D_{2p^k}/D_{2p^k}) \cong \m{\pi}_0^{D_{2p^k}}(\THR(H\mZ))(D_{2p^k}/\mu_{p^k})\cong \W_{k+1}(\mathbb{Z};p).\]
 \end{cor}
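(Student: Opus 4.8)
The plan is to read off the two values of the \(D_{2p^{k}}\)-Mackey functor \(\underline{\pi}_0^{D_{2p^k}}\THR(H\mZ)\) from the explicit description of \(N_{D_2}^{D_{2p^k}}\mZ\) obtained above. By Theorem~\ref{propz} (together with Theorem~\ref{thm:NormToDmofZ}), \(\underline{\pi}_0^{D_{2p^k}}\THR(H\mZ)\cong \m{R}_{p^k}=\mA^{D_{2p^k}}/(2-[D_{2p^k}/\mu_{p^k}])\). First I would record that \(\m{R}_{p^k}(D_{2p^k}/D_{2p^k})\) is free abelian on the classes \([D_{2p^k}/D_{2p^j}]\) for \(0\le j\le k\) (by the computation of \(\m{R}_m(*)\) above), and that, by Lemma~\ref{lem: restriction of the norm of Z}, \(\iota_{\mu_{p^k}}^{*}\m{R}_{p^k}\cong\mA^{\mu_{p^k}}\), so that \(\m{R}_{p^k}(D_{2p^k}/\mu_{p^k})\cong A(\mu_{p^k})\) is free abelian on the classes \([\mu_{p^k}/\mu_{p^j}]\) for \(0\le j\le k\).

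For the first isomorphism in the statement I would invoke the description of the restriction map \(\m{R}_{p^k}(*)\to\m{R}_{p^k}(D_{2p^k}/\mu_{p^k})\) given above: it sends \([D_{2p^k}/D_{2p^j}]\mapsto[\mu_{p^k}/\mu_{p^j}]\), hence carries the displayed \(\mathbb{Z}\)-basis bijectively onto the displayed \(\mathbb{Z}\)-basis, and is therefore an isomorphism of abelian groups. This yields \(\underline{\pi}_0^{D_{2p^k}}\THR(H\mZ)(D_{2p^k}/D_{2p^k})\cong\underline{\pi}_0^{D_{2p^k}}\THR(H\mZ)(D_{2p^k}/\mu_{p^k})\).

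It remains to identify one of these groups with \(\W_{k+1}(\mathbb{Z};p)\). Unwinding the Mackey functor, \(\underline{\pi}_0^{D_{2p^k}}\THR(H\mZ)(D_{2p^k}/\mu_{p^k})=\pi_0\big((\iota_{\mu_{p^k}}^{*}\THR(H\mZ))^{\mu_{p^k}}\big)\); restricting further along \(\mu_{p^k}\subset\mathbb{T}\subset O(2)\) and using that the underlying \(\mathbb{T}\)-spectrum of \(\THR(H\mZ)\) agrees, genuinely on all finite subgroups of \(\mathbb{T}\), with \(\THH(H\mathbb{Z})\) — which follows from the fact that the restriction of the dihedral bar construction to the cyclic category \(\Lambda^{\op}\) recovers the cyclic bar construction of \(\iota_e^{*}H\mZ=H\mathbb{Z}\), together with Proposition~\ref{norm=bokstedt}, which is an \(\cF_{\text{Fin}}^{\mathbb{T}}\)-equivalence — this group is \(\pi_0\big(\THH(H\mathbb{Z})^{C_{p^k}}\big)\), which is \(\W_{k+1}(\mathbb{Z};p)\) by Hesselholt--Madsen \cite{HM97}. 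This recovers the computation of \cite{DMP19}. Alternatively, one can bypass the topology and identify \(A(C_{p^k})\cong\W_{k+1}(\mathbb{Z};p)\) directly from the classical table-of-marks/ghost-coordinate description of both sides. The main obstacle is precisely this last identification: one must ensure that the fixed points occurring are the genuine ones, so that \cite{HM97} applies on the nose rather than only after \(p\)-completion, which is exactly what the comparison of models for \(\THR\) guarantees.
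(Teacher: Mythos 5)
Your argument is correct and follows the paper's (unstated) approach: read both abelian groups off the explicit computation of $\m{R}_{p^k}$ and its restriction to $\mu_{p^k}$ from Theorem~\ref{propz} and Lemma~\ref{lem: restriction of the norm of Z}, then identify $A(\mu_{p^k})$ with $\W_{k+1}(\mathbb{Z};p)$ via the classical Burnside ring/Witt vectors correspondence for cyclic $p$-groups. The topological route you also sketch would work but requires a slightly more careful comparison of $\iota_{\mathbb{T}}^*\THR(H\mZ)$ with $\THH(H\mathbb{Z})$ on genuine fixed points than Proposition~\ref{norm=bokstedt} alone provides; the purely algebraic identification you offer as a bypass is cleaner and is what the paper's context suggests.
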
  
Since we computed the restriction and transfer maps, we have the following computation. 
\begin{cor}
There is an isomorphism of Mackey functors
\[ \mathbb{W}_{k}(\m{\mathbb{Z}};p)=\underline{\W_{k}(\mathbb{Z};p)}\]
for odd primes $p$.
\end{cor}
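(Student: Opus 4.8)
The plan is to unwind $\mathbb{W}_k(\m{\mathbb{Z}};p)$ into a concrete $D_2$-Mackey functor and then observe that its structure maps are exactly those of a constant Mackey functor, using only facts already in hand. Write $m=p^{k-1}$. By Definition~\ref{def: Witt vectors}, $\mathbb{W}_k(\m{\mathbb{Z}};p)=\HR_0^{D_{2m}}(\m{\mathbb{Z}})^{\mu_{m}}$; applying the linearization isomorphism of Theorem~\ref{thm:linearization} to $A=H\m{\mathbb{Z}}$ (so $\upi_0^{D_2}A=\m{\mathbb{Z}}$), together with Theorem~\ref{propz} (equivalently Theorem~\ref{thm:NormToDmofZ}), identifies $\HR_0^{D_{2m}}(\m{\mathbb{Z}})\cong\upi_0^{D_{2m}}\THR(H\m{\mathbb{Z}})\cong\m{R}_m=\mA^{D_{2m}}/(2-[D_{2m}/\mu_m])$. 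So it remains to analyze the $D_2$-Mackey functor $\m{R}_m^{\mu_m}$, which by Definition~\ref{fixed points of Mackey functor} and its Lewis-diagram description has values $\m{R}_m(D_{2m}/D_{2m})$ at $D_2/D_2$ and $\m{R}_m(D_{2m}/\mu_m)$ at $D_2/e$, structure maps $\res_{\mu_m}^{D_{2m}}$ and $\tr_{\mu_m}^{D_{2m}}$, and Weyl action given by the residual $W_{D_{2m}}(\mu_m)\cong D_2$-action.

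First I would handle the restriction map. The explicit computation of $\res_{\mu_m}^{D_{2m}}\colon\m{R}_m(D_{2m}/D_{2m})\to\m{R}_m(D_{2m}/\mu_m)$ recorded above sends the standard basis element $[D_{2m}/D_{2p^i}]$ to $[\mu_m/\mu_{p^i}]$, which (using $i_{\mu_m}^{*}\m{R}_m\cong\mA^{\mu_m}$ from Lemma~\ref{lem: restriction of the norm of Z} and the freeness statements for $\m{R}_m$) is a bijection between the standard $\mathbb{Z}$-bases of $\m{R}_m(D_{2m}/D_{2m})$ and of $\mA^{\mu_m}(\mu_m/\mu_m)$. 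Hence $\res_{\mu_m}^{D_{2m}}$ is an isomorphism of abelian groups. For the transfer, I would use the projection formula for the Green (indeed Tambara) functor $\m{R}_m$: since $\mu_m$ is normal in $D_{2m}$, for any $y$ we have $\tr_{\mu_m}^{D_{2m}}(\res_{\mu_m}^{D_{2m}}(y))=y\cdot\tr_{\mu_m}^{D_{2m}}(1)=y\cdot[D_{2m}/\mu_m]$, and the defining relation of $\m{R}_m$ is precisely $[D_{2m}/\mu_m]=2$, so $\tr_{\mu_m}^{D_{2m}}\circ\res_{\mu_m}^{D_{2m}}=2\cdot\id$ (this also matches the explicit formula $[\mu_m/\mu_{p^i}]\mapsto 2[D_{2m}/D_{2p^i}]$). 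Finally, the nontrivial element of $W_{D_{2m}}(\mu_m)$ acts on $\m{R}_m(D_{2m}/\mu_m)$ through the induced action on $\mA^{D_{2m}}(D_{2m}/\mu_m)=A(\mu_m)$, namely conjugation $c_\tau$ on finite $\mu_m$-sets; as $c_\tau$ fixes each (normal) subgroup $\mu_{p^i}$ of $D_{2m}$, it fixes every basis element $[\mu_m/\mu_{p^i}]$, so the Weyl action is trivial.

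A $D_2$-Mackey functor whose restriction is an isomorphism, whose transfer satisfies $\tr\circ\res=2\cdot\id$, and whose Weyl action is trivial is canonically the constant Mackey functor on its value at $D_2/D_2$: the comparison isomorphism is the identity at $D_2/D_2$ and $\res^{-1}$ at $D_2/e$, and compatibility with $\res$, $\tr$ and the Weyl action is exactly the three conditions just verified. Since the Corollary immediately preceding the statement identifies $\m{R}_m(D_{2m}/D_{2m})\cong\W_k(\mathbb{Z};p)$, this yields $\mathbb{W}_k(\m{\mathbb{Z}};p)\cong\underline{\W_k(\mathbb{Z};p)}$ as $D_2$-Mackey functors.

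The only real obstacle is bookkeeping: one must confirm that the residual $D_2$-action on $\m{R}_m^{\mu_m}$ is the one inherited from $\mA^{D_{2m}}$ (immediate from functoriality of $(-)^{\mu_m}$ applied to the surjection $\mA^{D_{2m}}\to\m{R}_m$) and that the transfer computation is compatible with the restriction isomorphism used to pin down the value at $D_2/e$. All the substantive input — the presentation of $\m{R}_m$, its restriction and transfer maps, and the appearance of $\W_k(\mathbb{Z};p)$ — is already established, so I anticipate no new difficulty beyond this routine check.
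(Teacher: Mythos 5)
Your proof is correct and is a faithful unpacking of the paper's own terse justification ("since we computed the restriction and transfer maps"), using exactly the preceding propositions on $\m{R}_m$. One small remark: once $\res_{\mu_m}^{D_{2m}}$ is shown to be surjective, the Weyl action on $\m{R}_m(D_{2m}/\mu_m)$ is automatically trivial (restrictions land in Weyl invariants) and then $\tr\circ\res=2\cdot\id$ follows from the Mackey axiom $\res\circ\tr=1+\tau$, so your projection-formula step, while correct, is not strictly needed.
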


\bibliographystyle{plain}
\bibliography{THR-Witt}
\end{document}